\journal{}
\newtheorem{theorem}{Theorem}[section]
\newtheorem{definition}{Definition}[section]
\newtheorem{lemma}{Lemma}[section]
\newtheorem{proposition}{Proposition}[section]
\newtheorem{remark}{Remark}[section]
\newenvironment{proof}[1][Proof]{\textbf{#1.} }{\ \rule{0.5em}{0.5em}}
\newcommand{\R}{\mathbb{R}}
\def\widebreve#1{\mathop{\vbox{\m@th\ialign{##\crcr\noalign{\kern\p@}%
  \brevefill\crcr\noalign{\kern0.1\p@\nointerlineskip}%
  $\hfil\displaystyle{#1}\hfil$\crcr}}}\limits}
\newcommand*\bigcdot{\mathpalette\bigcdot@{.6}}
\newcommand*\bigcdot@[2]{\mathbin{\vcenter{\hbox{\scalebox{#2}{$\m@th#1\bullet$}}}}}
\def\brevefill{$\m@th \setbox\z@\hbox{}%
  \scalebox{0.7}{\rotatebox[origin=c]{90}{(}} \kern4pt $}
\newcommand{\OF}{\Omega^f_0}
\newcommand{\OS}{\Omega^s_0}
\newcommand{\ciajb}{c_{i \alpha j \beta}}
\newcommand{\diajb}{d_{i \alpha j \beta}}
\newcommand{\biajb}{b_{i \alpha j \beta}}
\newcommand{\al}{\alpha}
\newcommand{\be}{\beta}
\newcommand{\xit}{\tilde{\xi}}
\newcommand{\xxi}{\bm{\tilde \xi}}
\newcommand{\vv}{\bm{\tilde v}}
\newcommand{\nv}{\bm \nabla \bm \vv}
\newcommand{\tv}{\bm \nabla \bm {\breve v}}
\newcommand{\ma}{\bm {\mathcal{A}}}
\newcommand{\tc}{\bm{\breve{\ma}}}
\newcommand{\nc}{\bm \nabla \tc}
\newcommand{\cch}{\textup{cof}(\bm \nabla \tc)}
\newcommand{\tciajb}{\breve{c}_{i \alpha j \beta}}
\newcommand{\tdiajb}{\breve{d}_{i \alpha j \beta}}
\newcommand{\tbiajb}{\breve{b}_{i \alpha j \beta}}
\newcommand{\qciajb}{{c}^{q}_{i \alpha j \beta}}
\newcommand{\lciajb}{{c}^{l}_{i \alpha j \beta}}
\newcommand{\Id}{\textbf{\textup{Id}}}
\newcommand{\norm}{\lvert \lvert}
\newcommand{\inc}{(\bm \nabla \tc)^{-1}}
\newcommand{\itnc}{(\bm \nabla \tc)^{-t}}
\newcommand{\cof}{\textup{cof}}
\newcommand{\nga}{\bm \nabla \bm {\tilde \gamma}}
\newcommand{\nze}{\bm \nabla \bm {\tilde \zeta}}
\newcommand{\tdet}{\textup{det}}
\newcommand{\tca}{\bm{\breve{\mathcal{A}}}}
\newcommand{\tp}{\bm {\breve \varphi}}
\newcommand{\txi}{\bm {\breve \xi}}
\begin{document}

\begin{frontmatter}

\title{ANALYSIS OF THE INTERACTION BETWEEN AN
INCOMPRESSIBLE FLUID AND A QUASI-INCOMPRESSIBLE
NON-LINEAR ELASTIC STRUCTURE
}

\author{Fatima ABBAS
}
\address{Laboratory of Mathematics (EDST)  - Lebanese University - Lebanon \\
Laboratoire de Math\'ematiques Appliqu\'ees du Havre (LMAH) - Le Havre University - France}
\ead{fatima-abs@hotmail.com}

\author{Ayman MOURAD}
\address{Department of Mathematics - Faculty of Science (I) - Lebanese University - Lebanon}
\ead{ayman\_imag@yahoo.fr}

%

\begin{abstract}
We are interested in studying an unsteady fluid-structure interaction problem in a three-dimensional space. We consider a homogeneous Newtonian fluid which is modeled by the Navier-Stokes equations. Whereas the motion of the structure is described by the quasi-incompressible non-linear Saint Venant-Kirchhoff model. We establish the local in time existence and uniqueness of solution for this model. For this sake, first we rewrite the non-linearity of the elastodynamic equation in an explicit way. Then, a linearized problem is introduced in the Lagrangian reference configuration and we prove that it admits a unique solution. Based on the \textit{a priori} estimates on the solution of this problem together with the fixed point theorem we prove that the non-linear problem admits a unique local in time solution. At last, by the inf-sup condition we reach to the existence of the fluid pressure.
\end{abstract}

\begin{keyword}
Fluid-structure interaction, Navier-Stokes, elastodynamic equations, Saint Venant-Kirchhoff model, fixed point theorem.
\MSC[2010] 58F15\sep 58F17 \sep 53C35.
\end{keyword}

\end{frontmatter}

\linenumbers

\section*{Introduction}

Fluid-structure interaction (FSI)\index{FSI} problem is a wide spread subject which has gain a lot of concern and interest among mathematicians.~This is due to the fact that many real-world problems consider the analysis of FSI problems as an essential tool to avoid failure. For example, they are considered in the design of many engineering systems such as aircrafts, engines and bridges, where the FSI oscillations are studied. Also, in biological field, fluid-structure interaction problems play an important role in the analysis of aneurysms and blood flow in stenosed arteries.~Various kinds of fluid-structure interaction problems have been studied by modeling the fluid by either Stokes or Navier-Stokes equations coupled with an equation modeling the structure. Some deal with incompressible fluids \cite{Boulakia06,CS04,CG2000}, others with compressible fluids \cite{Boulakia2,Boulakia3}. Structures modeled with plate equations or shell equations were treated in \cite{Flori2}. The Stokes equations coupled with beam equation were analyzed in \cite{CG1998}. The case of a free boundary FSI with the flow being incompressible and coupled with a linear Kirchhoff elastic material has been treated in \cite{CS04}, where the existence and uniqueness locally in time of such motion has been proved. In \cite{CG2000} the existence locally in time of a weak solution for an incompressible fluid with a rigid structure has been proved. Similar model has been studied in \cite{Desj} considering a variable density where the global existence of the solution has been proved, that is, the existence of the solution until collisions occur between either the structure and boundaries or between two structures.
For the coupling of an incompressible fluid with elastic structure, the existence of global weak solutions has been proved in \cite{Boulakia06} when adding a regularizing term to the structure motion. In 3D, the work in \cite{CG2002} has proved the existence of steady solutions of the incompressible Navier-Stokes equations when coupled with the non-linear Saint Venant-Kirchhoff model. Whereas, the existence and uniqueness of a regular solution has been proved in the case of compressible Navier-Stokes equations coupled with the non-linear Saint Venant-Kirchhoff model in \cite{Boulakia2}, and with linear elastic model in \cite{Boulakia3}. 
\\ \\
\indent
In our work we consider the interaction between an incompressible homogeneous Newtonian fluid modeled by the Navier-Stokes equations surrounded by a hyperelastic quasi-incompressible structure modeled by the non-linear Saint Venant-Kirchhoff model. 
We couple them in a one domain, by considering a common boundary and imposing some conditions on it. 
First, we introduce the coupled system at time $t$, which consists of the incompressible homogeneous Navier-Stokes equations with the elastodynamic equations modeled by the non-linear Saint Venant-Kirchhoff model. 
From mathematical point of view, Navier-Stokes equations are studied in the Eulerian (spatial) framework, whereas elastic structures are studied in the Lagrangian (material) framework. In order to be able to study the coupled system we use the deformation mappings of both the fluid and the structure domains to rewrite the coupled system in the Lagrangian framework, in particular, in the reference configuration corresponding to the time $t=0$. Indeed, since we are working with a problem involving a free moving boundary, the Lagrangian frame allows us to consider working on a fixed domain.
As for the Saint Venant-Kirchhoff model we rewrite it in an explicit form which enables us to easily deal with it when applying the fixed point theorem as well as to find some bounds on it. 
In a second step we partially linearize our system by considering the deformations to be given for given fluid velocity $\bm {\breve v}$ and structure's displacement $\txi$, such that the couple $(\bm {\breve v},\txi)$ is in some fixed point space. The third step consists of formulating an auxiliary problem, which comes from the classical system by changing slightly the coupling conditions coming from the elastodynamic equations associated to the structure. The weak formulation is derived by considering a transformation of a divergence-free setting, so that the fluid pressure term will disappear. Using Faedo-Galerkin approach we define Galerkin approximations of the solutions and derive a priori estimates for the Galerkin sequence. By passing to the limit, and using compactness results with Aubin-Lions-Simon Theorem we prove the existence and uniqueness of a solution for the auxiliary problem. Based on the results concerning the auxiliary problem, and using the fixed point theorem we prove the existence and uniqueness of the solution of the partially linearized problem. Coming back to the non-linear problem, we use the fixed point theorem approach to prove the existence of a solution for the non-linear fluid-structure interaction problem. Finally, we establish the existence of an $L^2$ fluid pressure by verifying the inf-sup condition.

%
%
%
%
%
%
%
%
%
%
%
%
\section {Fluid-Structure Interaction Problem}\label{FSI}
The fluid is governed by the homogeneous incompressible Navier-Stokes equations.
Let $T>0$ be given. At time $t$, let $\Omega_f(t) \subset \R^3$ denotes a regular (enough) bounded connected domain representing the lumen of the artery. Denote by $\partial \Omega_f(t)= \Gamma_\textup{in}(t) \cup \Gamma_\textup{out}(t) \cup \Gamma_f(t)$ its smooth boundary.
The incompressible Navier-Stokes equations formulated in the Eulerain coordinates are
\begin{subnumcases}{}
    \rho_f \Big( \partial_t \bm v +(\bm v \cdot \nabla) \bm v \Big) -\bm \nabla \cdot \bm \sigma_f(\bm v,p_f) = 0 & \textrm{in $\Omega_f(t) \times (0,T) $}, \label{4-eq1} \\
    \nabla \cdot \bm v = 0 & \textrm{in $ \Omega_f(t) \times (0,T) $}, \label{4-eq2} \\
   \bm  \sigma_f(\bm v,p_f) \ \bm n_f = \bm g_f & \textrm{on $\Gamma_f(t) \times (0,T)$, } \label{4-eq3} \\
   \bm v=\bm v_\textup{in} & \textrm{on $\Gamma_{\textrm{in}}(t) \times (0,T)$, }\label{4-eq4} \\
    \bm \sigma_f(\bm v,p_f) \ \bm n_f = 0 & \textrm{on $\Gamma_{\textrm{out}}(t) \times (0,T)$, }\label{4-eq5} \\
    \bm v=\bm v_0    & \textrm{in $\Omega_f(t)$ at $t=0$, }\label{4-eq6}
\end{subnumcases}
where $\bm v=(v_1,v_2,v_3)^t$ is the fluid velocity, $p_f$ is its pressure and $\rho_f >0 $ is its density.
We denote by $\bm g_f$ an external load on $\Gamma_f(t)$. 
The term $\bm \sigma_f(\bm v,p_f)$ is the Cauchy stress tensor of the fluid whose expression is
\[
\bm \sigma_f(\bm v,p_f) = 2\mu \bm D(\bm v)-p_f \ \Id,
\]
with $\mu$ is its dynamic viscosity and $\bm D(\bm v)=\dfrac{\bm{\nabla v}+ \bm{(\nabla v)}^t}{2}$ is the symmetric gradient. 
On the other hand, the structure is considered to be a quasi-incompressible homogeneous hyperelastic material modeled by the non-linear
Saint Venant-Kirchhoff model \cite{Ciarlet}. We denote by $\Omega_s(t) \subset \R^3$ a regular enough domain that represents the structure at any time $t>0$ and by $\partial \Omega_s(t) $ its smooth boundary such that $\partial \Omega_s(t) = \Gamma_1(t) \cup \Gamma_2(t)$.
The structure displacement $\bm \xi_s$ satisfies the following equations
\begin{equation}\label{EqS1}
\begin{cases}
\rho_s \partial^2_t \bm \xi_s - \bm \nabla \cdot \bm \sigma^s_\textup{Qinc}(\bm \xi_s) = 0 & {\rm{in}} \; \Omega_s(t) \times (0,T), \\
\bm \sigma^s_\textup{Qinc}(\bm \xi_s) \ \bm {\tilde n}_s = \bm g_s & \textup{on} \; \Gamma_1(t) \times (0,T), \\
\bm \xi_s = 0  & \textup{on} \; \Gamma_2(t) \times (0,T), 
\end{cases}
\end{equation}
where $\bm \sigma^s_\textup{Qinc}$ is the Cauchy stress tensor characterizing the quasi-incompressible property of the structure and $\bm g_s$ is a surface external force applied on $\Gamma_1(t)$.
To set up the FSI system, the domains $\Omega_f(t)$ and $\Omega_s(t)$ are coupled by considering $\Gamma_1(t) \equiv \Gamma_f(t)$. Here and after the common boundary will be denoted by $\Gamma_c(t)$. To ensure the global energy balance of the system some coupling conditions representing the continuity of the velocities and stresses must be imposed on the boundary $\Gamma_c(t)$.
These coupling conditions are given as 
\begin{equation}\label{Coup-FS-Cdtn}
\begin{cases}
\bm v = \partial_t \bm \xi_s, & \textup{on} \quad \Gamma_c(t) \times (0,T),\\
\bm \sigma_f(\bm v,p_f) \ \bm n  = \bm \sigma^s_\textup{Qinc}(\bm \xi_s) \bm n  & \textup{on} \quad \Gamma_c(t) \times (0,T),
\end{cases}
\end{equation}
where $\bm n$ is the outward normal from $\Omega_f(t)$ to $\Gamma_c(t)$.\\
Finally, we introduce the initial conditions
\begin{itemize}
\item $\bm v(.,0)=\bm v_0$ \ \ in \ $\Omega_f(0)$,
\item $\bm \xi_s(.,0)=\bm \xi_0$  \ \ in \ $\Omega_s(0)$,
\item $\partial_t \bm \xi_s(.,0)=\bm \xi_1$  \ \ in \ $\Omega_s(0)$,
\item $p_f(.,0)=p_{f_0}$  \ \ in \ $\Omega_f(0)$,
\end{itemize}
which satisfy
\begin{equation}\label{intialcdtns}
\bm v_0 \in H^6(\Omega_f(0)),
\ \bm \xi_0 \in H^4(\Omega_s(0)),
\ \bm \xi_1 \in H^3(\Omega_s(0))
\ \textup{and} \
\ p_{f_0} \in H^3(\Omega_f(0)).
\end{equation}
Let $\Omega(t)= \left[ \overline{\Omega_f(t)} \cup \overline{\Omega_s(t)} \right]^{\circ}$
and $\partial \Omega(t) = \left[ \partial \Omega_f(t) \cup \partial \Omega_s(t) \right] \setminus \left[ \partial \Omega_f(t) \cap \partial \Omega_s(t) \right]$.\\
At time $t>0$, the coupled system is given by
\begin{subnumcases}{}
    \rho_f \Big( \partial_t \bm v +(\bm v \cdot \nabla) \bm v \Big) -\bm \nabla \cdot \bm \sigma_f(\bm v,p_f) = 0 & \textrm{in $\Omega_f(t) \times (0,T)$,} \label{eqcoup1} \\
    \nabla \cdot \bm v = 0 & \textrm{in $ \Omega_f(t) \times (0,T), $} \label{eqcoup2} \\
    \bm v=\bm v_\textup{in} & \textrm{on $\Gamma_{\textrm{in}}(t) \times (0,T)$,}\label{eqcoup4}\\
    \bm \sigma_f(\bm v,p_f) \ \bm n = 0 & \textrm{on $\Gamma_{\textrm{out}}(t) \times (0,T)$, }		\label{eqcoup5} \\
   	\rho_s \partial^2_t \bm \xi_s - \bm \nabla \cdot \bm \sigma^s_\textup{Qinc}(\bm \xi_s)=0 & \textrm{in $ \Omega_s(t)            	\times (0,T)$},\label{eqcoup6} \\
	\bm \xi_s = 0  & \textrm{on $ \Gamma_2(t) \times (0,T) $},\label{eqcoup8}\\
	\bm v =  \partial_t \bm \xi_s & \textrm{on $ \Gamma_c(t) \times (0,T) $},				\label{eqcoup9} \\
	\bm \sigma_f(\bm v ,p_f) \ \bm n =\bm \sigma^s_\textup{Qinc}(\bm \xi_s) \ \bm n  & \textrm{on $ \Gamma_c(t) \times (0,T)$},\label{eqcoup100}\\
	\bm v(.,0)=\bm v_0 \ \textrm{and} \ p_f(.,0)=p_{f_0} & \textrm{in $\Omega_f(0)$, }\label{eqcoup101}\\
   	\bm \xi_s(.,0)=\bm \xi_0 \ \textrm{and} \ \partial_t \bm \xi_s(.,0)=\bm \xi_1 & \textrm{in $\Omega_s(0)$. }\label{eqcoup102}
\end{subnumcases}
The Navier-Stokes equations are defined on the domain $\Omega_f(t)$ which evolves over time from the initial configuration $\Omega_f(0)$ according to a position function
\begin{align*}
%
\bm{\mathcal{A}}(.,t):\Omega_f(0) &\longrightarrow \Omega_f(t) \nonumber \\
\bm {\tilde x} & \longrightarrow  \bm{\mathcal{A}}(\bm {\tilde x},t)=\bm x
\end{align*}
that associates to the Lagrangian coordinate of a fluid particle its Eulerian coordinate.
For all $\bm {\tilde x} \in \Omega_f(0)$ the function $\bm {\mathcal{A}}(\bm {\tilde x},.)$ satisfies
\begin{equation*}
\begin{cases}
\partial_t \bm{\mathcal{A}}(\bm {\tilde x},t)=\bm v(\bm{\mathcal{A}}(\bm {\tilde x},t),t)  \qquad \text{for} \ t \in(0,T),
 \\
\bm{\mathcal{A}}(\bm {\tilde x},0)=\bm {\tilde x}.
\end{cases}
\end{equation*}
The function $\bm \ma$ is called the Arbitrary Lagrangian-Eulerian (ALE) map.

Similarly, the elastodynamic equations in the displacement $\bm \xi_s$ are defined on the domain $\Omega_s(t)$ which evolves over time from the initial configuration $\Omega_s(0)$ according to a position function
\begin{align*}
%
\bm \varphi_s(.,t):\Omega_s(0) &\longrightarrow \Omega_s(t) \nonumber \\
\bm {\tilde y} &\longrightarrow  \bm \varphi_s(\bm {\tilde y},t)=\bm y
\end{align*}
and we have
\begin{align}
\label{Defo-structure}
 \bm \varphi_s(\bm {\tilde y},t)=\bm {\tilde y}+\bm \xi_s(\bm \varphi_s(\bm {\tilde y},t),t).
\end{align}
Notice that, using \eqref{Defo-structure} we have
\begin{align*}
 \bm \varphi_s(\bm {\tilde y},0)=\bm {\tilde y}+\bm \xi_s(\bm {\tilde y},0), \ \textrm{that is} \ \bm {\tilde y}=\bm {\tilde y} + \bm \xi_0 
\end{align*}
which yields $\bm \xi_0=\bm 0$.
\\
In the  sequel, we omit the subscript $s$ of the structure displacement and deformation, that is, we write $\bm \xi_s \equiv \bm \xi$ and 
$\bm \varphi_s \equiv \bm \varphi$. Further, we refer to the space elements in $\OF$ and $\OS$ by $\bm {\tilde x}$.
%
%
%
%

The definition of these two mappings enables us to write System \eqref{eqcoup1}-\eqref{eqcoup102} on the domain $\Omega(0)$.
To do so, we consider the following change of variables in terms of the deformation mappings $\bm {\mathcal{A}}$ and $\bm \varphi$.
For all $\bm {\tilde x}$ in $\Omega_f(0)$ and $\Omega_s(0)$ and $t$ in $(0,T)$ set
\begin{equation}\label{change-ofv}
\bm \vv(\bm {\tilde x},t)=\bm v(\bm{\mathcal{A}}(\bm {\tilde x},t),t),
\ \bm \xxi(\bm {\tilde x},t)=\bm \xi(\bm \varphi(\bm {\tilde x},t),t) \
 \textrm{and} 
\ \tilde{p}_f(\bm {\tilde x},t)=p_f(\bm{\mathcal{A}}(\bm {\tilde x},t),t). \
\end{equation}
On the reference domain $\Omega_f(0)$, the fluid stress tensor is given by \cite[Section 2.1.7]{Richter} as
\begin{equation}\label{fluidstress-intial0}
\begin{split}
\bm {\tilde \sigma}^0_f(\bm \vv,\tilde{p}_f)
&=
%
%
\Bigl(
\mu \bigl(\bm \nv (\bm{\nabla \mathcal{A}})^{-1}
+
(\bm{\nabla \mathcal{A}})^{-t} (\bm \nv)^t \bigr) -
\tilde{p}_f \Id \Bigr) \textrm{cof}(\bm{\nabla \mathcal{A}}) 
 \\
& =
\bm {\tilde \sigma}^0_f(\bm \vv)-\tilde{p}_f 
\textrm{cof}(\bm{\nabla \mathcal{A}}).
\end{split}
\end{equation}
As for the quasi-incompressible structure, the Cauchy stress tensor is given in terms of the first Piola-Kirchhoff stress tensor $\bm{P}$ as \cite[Lemma 2.12]{Richter}
\begin{equation}\label{Inc-First-pio}
\begin{split}
\bm{P}_\textup{Qinc}
&=
\textrm{det}(\bm{\nabla \varphi}) \big( \bm \sigma^s_\textup{Qinc}(\bm \xi) \circ \bm \varphi \big) (\bm{\nabla \varphi})^{-t}
\\
&=
\bm{P}
+
\mathsf{C}(\tdet(\bm{\nabla \varphi})-1)\cof(\bm{\nabla \varphi})
\end{split}
\end{equation}
with $\mathsf{C}>0$ a sufficiently large constant and
\begin{equation}\label{First Piola}
\bm{P}= \bm{\nabla \varphi} \bm{S}(\bm{\nabla \varphi})
\end{equation}
where
\begin{equation*}
\bm{S}(\bm{\nabla \varphi})=2 \mu_s \bm{E}(\bm{\nabla \varphi})+\lambda_s \textrm{tr}(\textbf{E}(\bm{\nabla \varphi}))\Id
\end{equation*}
is the second Piola-Kirchhoff stress tensor and
\begin{align*}
\bm{E}(\bm{\nabla \varphi}) = \dfrac{1}{2}(({\bm{\nabla \varphi}})^t \bm{\bm{\nabla \varphi}} -\Id)
\end{align*}
is the Green-Lagrange strain tensor with ($\mu_s,\lambda_s$) $\in \R^*_+ \times \R_+$ are the Lam\'e coefficients.\\
In particular, when considering the Saint Venant-Kirchhoff stress tensor, Expression \eqref{First Piola} can be rewritten in terms of the displacement $\bm \xxi$ as 
\begin{equation}
\bm{P}= (\Id+ \bm{\bm{\bm{\nabla \xxi}}}) 
\bigg(\mu_s 
\Big(
\bm{\nabla \xxi}+(\bm \nabla \bm \xxi)^t 
+(\bm \nabla \bm \xxi)^t \bm \nabla \bm \xxi
\Big)
+
\dfrac{\lambda_s}{2} \Big (2 \nabla \cdot \bm \xxi +| \bm \nabla \bm \xxi|^2 \Big )\Id 
\bigg).
\end{equation}
Using relations \eqref{change-ofv}-\eqref{Inc-First-pio} we reformulate the Navier-Stokes equations and the elastodynamic equations in the Lagrangian coordinates. Hence, we can rewrite the coupled System \eqref{eqcoup1}-\eqref{eqcoup102} on $\Omega_f(0)$ and $\Omega_s(0)$ as
\begin{equation}\label{system in ref conf}
\begin{cases}

    \rho_f \tdet(\bm{\nabla \mathcal{A}})\partial_t \bm \vv -\bm \nabla \cdot \bm {\tilde \sigma}^0_f(\bm \vv,\tilde{p}_f) = 0 & \textrm{in $\Omega_f(0) \times (0,T) $}, 						

    \\

    \nabla \cdot \big(\tdet(\bm{\nabla \ma})
    (\bm{\nabla \mathcal{A}})^{-1} \bm \vv \big) = 0 & \textrm{in $ \Omega_f(0) \times (0,T), $} 						
    \\

    \bm \vv=\bm v_\textup{in} \circ \bm \ma & \textrm{on $\Gamma_{\textrm{in}}(0) \times (0,T)$, }					
\\    
    \bm {\tilde \sigma}^0_f(\bm \vv,\tilde{p}_f) \ \bm {\tilde n} = 0 & \textrm{on $\Gamma_{\textrm{out}}(0) \times (0,T)$, }		
\\
   	\rho_s \tdet(\bm{\nabla \varphi}) \partial^2_t \bm \xxi - \bm \nabla \cdot \bm{P}
   	-
\bm    \nabla \cdot 
   \big[
   \mathsf{C}
   (\tdet(\bm{\nabla \varphi})-1) \cof(\bm{\nabla \varphi})
   \big]
   	=0 & \textrm{in $\Omega_s(0)\times (0,T)$},
\\
	\bm \xxi = 0  & \textrm{on $ \Gamma_2(0) \times (0,T),$}
\\
	\bm \vv = \partial_t \bm \xxi & \textrm{on $ \Gamma_c(0) \times (0,T) $},
\\	
	\bm {\tilde \sigma}^0_f(\bm \vv,\tilde{p}_f) \bm {\tilde n} 
	= 
	\left[ \bm{P}+ 
	\mathsf{C}(\tdet(\bm{\nabla \varphi})-1) \cof(\bm{\nabla \varphi}) \right] \bm  {\tilde n} & \textrm{on $ \Gamma_c(0) \times 			 (0,T)$},
\\	
	\bm \vv(.,0)=\bm v_0 
	\quad 	\textrm{and} \quad 
	\tilde{p}_f(.,0)=p_{f_0} & 
	\textrm{in $\Omega_f(0)$},
\\	
   	\bm \xxi(.,0)=\bm \xi_0=0 
   	\quad \textrm{and} \quad 
   	\partial_t \bm \xxi(.,0)=\bm \xi_1 & \textrm{in $\Omega_s(0)$},
\end{cases}
\end{equation}
where $\bm{\nabla \varphi}= \Id + \bm{\bm{\bm{\nabla \xxi}}}$ is the gradient of the deformation and $\bm {\tilde n}$ is the outward normal of $\Omega_f(0)$ on $\Gamma_c(0)$.
\\
\indent In order to deal with the structure model, we write the elasticity model in the spirit of \cite{Gawi}, that is, we define
\begin{align}\label{ciajb}
\ciajb=
\dfrac{\partial \bm{P}_{i \al}}
{\partial(\partial_\be \bm \xit_j)}.
\end{align}
Let us set
\begin{equation}\label{SVKM}
\ciajb(\bm{\bm{\bm{\nabla \xxi}}}) = \mu_s( \delta_{\beta i} \delta_{\alpha j}
				   +\delta_{\al \be} \delta_{ij} )
				   + \lambda_s(\delta_{i \al} \delta_{j \be})
				   + \ciajb^l (\bm{\bm{\bm{\nabla \xxi}}})  + \ciajb^q (\bm{\bm{\bm{\nabla \xxi}}}),
\end{equation}
where $ \ciajb^l (\bm{\bm{\bm{\nabla \xxi}}})$ is the linear part given by
\begin{align}\label{linearpart}
\ciajb^l (\bm{\bm{\nabla \xxi}}) &=
\mu_s \bigl(\delta_{ij} \partial_\be \xit_\al +
\delta_{\al j} \partial_\be \xit_i +
\delta_{ij} \partial_\al \xit_\be +
\delta_{\al \be} \partial_j \xit_i +
\delta_{i \be} \partial_\al \xit_j +
\delta_{\al \be} \partial_i \xit_j
\bigr) 
\nonumber\\
&
+ \lambda_s \bigl(\delta_{i \al} \partial_\be \xit_j +
\delta_{\al \be} \delta_{ij}(\nabla \cdot \bm \xxi) +
\delta_{j \be} \partial_\al \xit_i \bigr)
\end{align}
and $\ciajb^q$ is the quadratic part written as
\begin{equation}\label{quadraticpart}
\begin{split}
\ciajb^q(\bm{\bm{\bm{\nabla \xxi}}})=&
\mu_s \biggl(
\delta_{ij}(\partial_\be \xi \cdot \partial_\al \xit) +
\partial_\be \xi_i \partial_\al \xit_j +
\delta_{\al \be}(\bm{\bm{\nabla \xxi}}_j \cdot \bm{\bm{\nabla \xxi}}_i)
\biggr)
+
\lambda_s \biggl( \frac{1}{2} \delta_{ij} \delta_{\al \be} \lvert \bm{\bm{\bm{\nabla \xxi}}} \lvert^2
+ \partial_\al \xit_i \partial_\be \xit_j
\biggr).
\end{split}
\end{equation}
Hence, $\ciajb$ can be rewritten as
\begin{align}
\ciajb (\bm{\bm{\bm{\nabla \xxi}}}) = Cst +L(\bm{\bm{\bm{\nabla \xxi}}})+ Q(\bm{\nabla \xxi}),
\end{align}
where $Cst$ is a constant, $L$ is a linear function in $\bm{\bm{\bm{\nabla \xxi}}}$ and $Q$ is a quadratic function in $\bm{\bm{\bm{\nabla \xxi}}}$.\\
Remark that the coefficients $\ciajb$ are symmetric, that is,
\begin{equation}\label{symcoeff}
\ciajb=c_{j \be i \al} \qquad \forall \ i,\al,j,\be \in \{1,2,3\}.
\end{equation}
%
%
%
%
%
%
%
%
%
\begin{lemma}\label{Piola-P}
For $k =i, \al, j, \be \in \{1,2,3\}$, we denote by $\partial_k$ the partial derivative in space and by $\partial_t$ and $\partial_s$ the partial derivatives with respect to time.
Some consequences of the relation \eqref{symcoeff} are the following
\begin{itemize}
\item[1-] For $i,\al \in \{ 1,2,3 \}$, the partial derivatives of $\bm{P}$ with respect to time and space are respectively
\begin{equation*}
\partial_s \bm{P}_{i \al} = \sum_{j,\be =1}^{3} \ciajb(\bm{\bm{\nabla \xxi}}) \partial_{s \be}^2 \xit_j
\quad
\textrm{and}
\quad
\partial_k \bm{P}_{i \al} = \sum_{j,\be =1}^{3} \ciajb(\bm{\bm{\nabla \xxi}}) \partial_{k \be}^2  \xit_j
.
\end{equation*}
\item[2-] The $i-th$ component of the divergence of $\bm{P}$ is given by
\begin{equation}\label{Div-coeff}
(\bm \nabla \cdot \bm{P})_i=
\sum_{\al,j,\be=1}^{3} \ciajb(\bm{\bm{\nabla \xxi}}) \partial_{\al \be}^2 \xit_j
\quad \forall \ i=1,2,3.
\end{equation}
\item[3-] Assuming that $\bm P(\bm \xxi(.,0))=0$ on $\Gamma_1(0)$, the normal component of the stress tensor $\bm{P}$ on the boundary $\Gamma_1(0)$ is 
\begin{equation}\label{Normal-coeff}
\sum_{\al=1}^{3}\bm{P}_{i \al} \tilde{n}_{\al}=
\sum_{\al,j,\be=1}^{3}
\bigg( \int_0^t \ciajb(\bm{\bm{\nabla \xxi}}) \partial_{s \be}^2 \xit_j \ ds \biggr)\tilde{n}_{\al}
\quad \forall \ i=1,2,3.
\end{equation}

\item[4-] The $i\al$-th component of $\bm{P}$ is given by
\begin{equation*}
\bm{P}_{i\al} = \sum_{\al,j,\be=1}^3 \bigg(\int_0^t \ciajb(\bm{\bm{\nabla \xxi}}) \partial_{s \be}^2 \xit_j \ ds\biggr) \quad \forall \ i=1,2,3.
\end{equation*}
\end{itemize}
\end{lemma}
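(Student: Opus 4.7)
The four statements are all direct consequences of the fact that, once one writes out \eqref{First Piola} with $\bm{\nabla \varphi} = \Id + \bm{\nabla \xxi}$, the first Piola–Kirchhoff tensor $\bm P$ is a polynomial of degree at most three in the entries of $\bm{\nabla \xxi}$, together with the defining relation \eqref{ciajb}. My plan is therefore to establish 1- by the chain rule, deduce 2- as a contraction of the spatial formula, and then obtain 4- and 3- by integrating the temporal formula in time.

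For 1-, since $\bm P_{i\alpha}$ depends on $(\tilde x,s)$ only through the values of $\partial_\beta \tilde\xi_j$, the chain rule yields
\begin{equation*}
\partial_s \bm P_{i\alpha} \;=\; \sum_{j,\beta=1}^{3} \frac{\partial \bm P_{i\alpha}}{\partial(\partial_\beta \tilde\xi_j)} \, \partial_s\bigl(\partial_\beta \tilde\xi_j\bigr),
\qquad
\partial_k \bm P_{i\alpha} \;=\; \sum_{j,\beta=1}^{3} \frac{\partial \bm P_{i\alpha}}{\partial(\partial_\beta \tilde\xi_j)} \, \partial_k\bigl(\partial_\beta \tilde\xi_j\bigr),
\end{equation*}
and by definition \eqref{ciajb} the coefficient is exactly $\ciajb(\bm{\nabla \xxi})$; a direct expansion from the Saint Venant--Kirchhoff expression in \eqref{First Piola} reproduces the decomposition \eqref{SVKM}--\eqref{quadraticpart}, which confirms the identification. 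Choosing $k=\alpha$ and summing over $\alpha$ in the spatial formula immediately gives the divergence identity \eqref{Div-coeff}, which is 2-.

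For 4-, integrate the temporal identity from 1- in $s$ over $(0,t)$ to get
\begin{equation*}
\bm P_{i\alpha}(\tilde x,t) \;-\; \bm P_{i\alpha}(\tilde x,0) \;=\; \sum_{j,\beta=1}^{3} \int_0^t \ciajb(\bm{\nabla \xxi}) \, \partial^2_{s\beta} \tilde\xi_j \, ds .
\end{equation*}
Since $\bm \xi_0 = \bm 0$ forces $\bm{\nabla \xxi}(\cdot,0)=\bm 0$, and $\bm P$ vanishes whenever $\bm{\nabla \xxi}=\bm 0$ (every term in \eqref{First Piola} carries at least one factor of $\bm{\nabla \xxi}$), one has $\bm P_{i\alpha}(\cdot,0)=0$, so the integral formula of 4- follows. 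Contracting 4- with the outward normal $\tilde n_\alpha$ on $\Gamma_1(0)$ and summing over $\alpha$ delivers \eqref{Normal-coeff} and hence 3-; the hypothesis $\bm P(\bm \xxi(\cdot,0))=0$ on $\Gamma_1(0)$ is precisely what is needed to discard the boundary term $\bm P_{i\alpha}(\cdot,0)\tilde n_\alpha$.

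No deep obstacle is expected; the only thing to be careful about is the verification in 1- that differentiating the cubic polynomial $\bm P$ with respect to $\partial_\beta \tilde\xi_j$ reproduces precisely the constant, linear and quadratic pieces in \eqref{SVKM}--\eqref{quadraticpart}. Observe that the symmetry \eqref{symcoeff} is not actually needed for any of 1-- 4-; it is recorded separately as a structural property of $\bm P$ that will be exploited later, and the present lemma rests only on \eqref{ciajb} and the vanishing of $\bm P$ at the reference configuration.
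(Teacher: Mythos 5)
Your proof is correct and follows essentially the same route as the paper: the chain rule for part 1, contraction with $r=\alpha$ for part 2, and integration in time for parts 3 and 4. The only organizational difference is that you prove 4 first and deduce 3 from it, whereas the paper proves 3 directly by integrating $\partial_s \bm P_{i\alpha}$ (and does not write out the proof of 4 at all); the underlying computation is the same. Two small remarks worth keeping: first, you correctly point out that $\bm\xi_0 = \bm 0$ already forces $\bm P(\cdot,0)=\bm 0$ throughout $\OS$, so the boundary hypothesis in item 3 is automatic rather than an additional assumption. Second, your observation that the symmetry relation \eqref{symcoeff} plays no role in items 1 through 4 — which rest entirely on the definition \eqref{ciajb} and the vanishing of $\bm P$ at $t=0$ — is correct; the paper's phrasing ``some consequences of the relation \eqref{symcoeff}'' is misleading, and the symmetry only becomes relevant later (e.g.\ for the coercivity estimate \eqref{coercivity}). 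Also note that your part 4 formula is summed over $j,\beta$ only, which is the correct index range; the paper's displayed formula carries a spurious extra summation over $\alpha$.
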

%
%
%
%
%
\begin{proof}
\begin{itemize}
\item[1-] Let $r$ be the index that represents either the time derivative or the space derivative. For the $i\al$-th component of $\bm P(\bm \xxi)$ we have
\begin{align*}
\partial_r(\bm{P}(\bm \xxi))_{i \al}
=
\sum_{j,\be=1}^3 
\dfrac{\partial(\bm{P}(\bm \xxi))_{i\al}}{\partial(\partial_\be \xit_j)}
\dfrac{\partial(\partial_\be \xit_j)}{\partial_r}
=
\sum_{j,\be=1}^3 
\ciajb(\bm{\bm{\nabla \xxi}}) \partial^2_{r \be} \xit_j.
\end{align*}
\item[2-] Considering  $r=\al$ in the first part yields
\begin{align*}
\partial_\al(\bm{P}(\bm \xxi))_{i \al}
=
\sum_{j,\be=1}^3 
\ciajb(\bm{\bm{\nabla \xxi}}) \partial^2_{\al \be} \xit_j.
\end{align*}
But for $i=1,2,3$ we have
\begin{align*}
(\bm \nabla \cdot \bm{P}(\bm \xxi))_i=
\sum_{\al=1}^3 
\partial_\al(\bm{P}(\bm \xxi))_{i \al}
=
\sum_{\al,j,\be=1}^{3} \ciajb(\bm{\bm{\nabla \xxi}}) \partial_{\al \be}^2 \xit_j.
\end{align*}
\item[3-]
For any $\bm \xxi$ in $\Omega_s(0)$ we have
\begin{align*}
\bm{P}_{i \al}(\bm \xxi(.,t))-\bm{P}_{i \al}(\bm \xxi(.,0)) 
= 
\int_0^t  \partial_s \bm{P}_{i \al}(\bm \xxi(.,s)) \ ds \quad \forall \ i,\al=1,2,3.
\end{align*}
Substituting $\partial_s \bm{P}(\bm \xxi(.,s))$ by its expression from the first part gives
\begin{align*}
\bm{P}_{i \al}(\bm \xxi(.,t))-\bm{P}_{i \al}(\bm \xxi(.,0)) 
= 
\sum_{j,\be=1}^3
\int_0^t  \ciajb(\bm \nabla \xxi) \partial^2_{s \be} \xit_j \ ds \quad \forall \ i,\al=1,2,3.
\end{align*}
In particular, on $\Gamma_1(0)$ we have $\bm{P}(\bm \xxi(.,0))=0$. Consequently, taking  the summation over $\al$
yields
\end{itemize}
\[
\sum_{\al =1}^3
\bm{P}_{i \al}(\bm \xxi(.,t))\tilde{n}_{\al} 
= 
\sum_{\al,j,\be=1}^3
\int_0^t  
\big(
\ciajb (\bm \nabla \xxi) \partial^2_{s \be} \xit_j \ ds
\big)
\tilde{n}_{\al}
\quad \forall \ 
i=1,2,3.
\]
\hspace{11.8cm}
\end{proof}

To deal with the quasi-incompressibility condition, we express it in a way similar to that of the first Piola-Kirchhoff stress tensor \eqref{SVKM}. To do so, we use the notation introduced in \cite[p.~5]{Ciarlet} by defining the third-order orientation tensor $\bm{(\varepsilon_{ijk})}$ whose components are the Levi-Civita symbol $\{\varepsilon_{ijk}\}_{ijk}$.
Using the Einstein summation convention on the indices, we define the $ij$-th element of the matrix $\cof(\bm{\bm{\nabla \varphi}})$ by
\begin{align*}
(\cof(\bm{\bm{\nabla \varphi}}))_{ij}
=
\dfrac{1}{2} \varepsilon_{mni} \varepsilon_{pqj}
\partial_p \varphi_m \partial_q \varphi_n.
\end{align*}
Further, the determinant of the 3-by3-matrix $\bm{\bm{\nabla \varphi}}$ is
\begin{align}\label{det}
\tdet(\bm{\bm{\nabla \varphi}})
=
\dfrac{1}{6}
\varepsilon_{ijk}
\varepsilon_{pqr}
\partial_p \varphi_i
\partial_q \varphi_j
\partial_r \varphi_k.
\end{align} 
We define
\begin{align}\label{diajb}
d_{i \al j \be}(\bm{\bm{\nabla \xxi}}) 
=\dfrac{\partial}
{\partial(\partial_\be \xit_j)} 
\Big[
\big(
\tdet(\bm{\bm{\nabla \varphi}})-1
\big)
 \ \cof(\bm{\bm{\nabla \varphi}})
 \Big]_{i \al}.
\end{align}
Clearly, $d_{i \al j \be}(\bm{\bm{\nabla \xxi}}) $ is a polynomial in $\bm{\bm{\nabla \xxi}}$ of degree at most 4. Moreover, for $i=\al$ and $j=\be$ we get the constant terms of this polynomial.
Then we can write
\begin{align}\label{diajb-def}
d_{i \al j \be}(\bm{\bm{\nabla \xxi}})
=
Cst+d^L_{i \al j \be}(\bm{\bm{\nabla \xxi}})
+
d^Q_{i \al j \be}(\bm{\bm{\nabla \xxi}})
+
d^T_{i \al j \be}(\bm{\bm{\nabla \xxi}})
+
d^F_{i \al j \be}(\bm{\bm{\nabla \xxi}})
\end{align}
where $d^L_{i \al j \be}, d^Q_{i \al j \be}, d^T_{i \al j \be}$ and $d^F_{i \al j \be}$ stand for polynomials in $\bm {\nabla \xxi}$ with respective degree $1, 2, 3$ and 4.
This writing enables us to give the $i-th$ component of $\bm \nabla \cdot [\mathsf{C}(\tdet(\bm{\bm{\nabla \varphi}})-1) \cof(\bm{\bm{\nabla \varphi}})]$. 
In fact,
\begin{align}\label{Div-diajb}
\bigg[
\bm \nabla \cdot 
\Big(
\mathsf{C}(\tdet(\bm{\nabla \varphi})-1) \cof(\bm{\nabla \varphi})
\Big)
\bigg]_i=
\mathsf{C}
\sum_{\al,j,\be=1}^3 d_{i \al j \be}(\bm{\bm{\nabla \xxi}}) \partial^2_{\al \be} \xit_j  \quad \textrm{for}  \quad i=1,2,3.
\end{align}

In a way similar to \eqref{Normal-coeff}, for $i=1,2,3$ the normal component of the quasi-incompressible condition on the boundary $\Gamma_1(0)$ is
\begin{equation}\label{Normal-diajb}
\sum_{\al=1}^{3}
\big[
(\tdet(\bm{\bm{\nabla \varphi}})-1) \cof(\bm{\bm{\nabla \varphi}})
\big]_{i \al} \tilde{n}_{\al}=
\sum_{\al,j,\be=1}^{3}
\bigg( \int_0^t \diajb(\bm{\bm{\nabla \xxi}}) \partial_{s \be}^2 \xit_j ds \biggr)\tilde{n}_{\al}
,
\end{equation}
provided that $\big(
\tdet(\bm{\bm{\nabla \varphi}})-1) \cof(\bm{\bm{\nabla \varphi}})
\big)(.,0)=0$ on $\Gamma_1(0)$.
\\
In what follows, for simplicity we set
\begin{align}\label{biajb}
\biajb= \ciajb+ \mathsf{C} \diajb.
\end{align}
%
%
%
%
Using Relations \eqref{Div-coeff}, \eqref{Normal-coeff}, \eqref{Div-diajb} and \eqref{Normal-diajb}, System \eqref{system in ref conf} can be rewritten as
%
%
%
%
%
%
\begin{equation}\label{System at t=0}
\begin{cases}
    \rho_f \tdet(\bm{\nabla \mathcal{A}})\partial_t \bm \vv - \bm\nabla \cdot \bm {\tilde \sigma}^0_f(\bm \vv,\tilde{p}_f) = 0 & \textrm{in $\Omega_f(0) \times (0,T) $},
\\    
    \nabla \cdot (\tdet(\bm{\nabla \ma})(\bm{\nabla \ma})^{-1} \bm \vv) = 0 & \textrm{in $ \Omega_f(0) \times (0,T) $}, 
\\
    \bm \vv=\bm v_{\rm in} \circ \bm {\mathcal{A}} & \textrm{on $\Gamma_{\textrm{in}}(0) \times (0,T)$, }
\\    
    \bm {\tilde \sigma}^0_f(\bm \vv,\tilde{p}_f) \ \bm {\tilde n} = 0 & \textrm{on $\Gamma_{\textrm{out}}(0) \times (0,T)$, }		
\\    
   	\rho_s \tdet(\bm{\bm{\nabla \xxi}}+ \Id)\partial^2_t \xit_i 
   	-     
   	\displaystyle \sum_{\al,j,\be=1}^{3}
   	\biajb(\bm{\bm{\nabla \xxi}})
   	\partial_{\al \be}^2 \xit_j=0, & 							\textrm{in $\Omega_s(0) \times (0,T)$},
\\	
	\bm \xxi = \bm 0  & \textrm{on $ \Gamma_2(0) \times (0,T) $},
\\	
	\bm \vv = \partial_t \bm \xxi & \textrm{on $ \Gamma_c(0) \times (0,T) $},				
\\	
\left[
\bm {\tilde \sigma}^0_f(\bm \vv,\tilde{p}_f) \bm {\tilde n}
\right]_i 
= 
\displaystyle \sum_{\al,j,\be=1}^3 						
\Bigg(\mathlarger \int_0^t \biajb(\bm{\bm{\nabla \xxi}}) 
\partial^2_{s \be} \xit_j ds \Bigg)\tilde{n}_{\al},
 & \textrm{on $ 					\Gamma_c(0) \times (0,T)$},
\\	
	\bm \vv(.,0)=\bm v_0 \quad \textrm{and} \quad \tilde{p}_f(.,0)=p_{f_0} & \textrm{in $\Omega_f(0)$, }
\\	
   	\bm \xxi(.,0)=\bm 0 \quad \textrm{and} \quad \partial_t \bm \xxi(.,0)=\bm \xi_1  & \textrm{in $\Omega_s(0)$.}
\end{cases}
\end{equation}
for $i=1,2,3$.\\
Notice that, unlike System \eqref{system in ref conf}, in this system the boundary condition related to the elastodynamic equation is incompatible with it. Indeed, for Equations \eqref{System at t=0}$_5$ and \eqref{System at t=0}$_8$ to combine we must have
\begin{equation}\label{Elast-natural-bdrycdtn}
[
\bm {\tilde \sigma}^0_f(\bm v,\tilde{p}_f) \bm{\tilde{n}}
]_i =
\sum_{\al,j,\be=1}^3 
\Big( 
\biajb(\bm{\bm{\nabla \xi}}) \partial_{\be} \xit_j
\Big)
\tilde{n}_{\al}
,
 \quad i=1,2,3, \ \textrm{on} \quad \Gamma_c(0) \times (0,T).
\end{equation}
This rewriting \eqref{System at t=0}$_5$ of the elasticity equation is efficient when performing the fixed point theorem on the system.
In fact, it helps to get over the difficulties emerging from the non-linearity of the Saint Venant-Kirchhoff model and the hyperbolic type of the equation.
Due to this disagreement issue between Equations \eqref{System at t=0}$_5$ and \eqref{System at t=0}$_8$, the first step of the work is to consider an \textit{auxiliary problem} including the natural boundary condition \eqref{Elast-natural-bdrycdtn}.\\
\\
By considering the boundary and initial conditions we assume that the following compatibility conditions hold on the initial values
\begin{equation}\label{comp-cdtns}
\begin{cases}{}
\bm v_0 = \bm \xi_1 & \textrm{on \quad $ \Gamma_c(0)$},\\
\bm \sigma_f(\bm v_0,p_{f_0})\bm n = \bm 0 & \textrm{on \quad $ \Gamma_c(0)$},\\
p_{f_0} = 2 \mu \bm D( \bm v_0) & \textrm{in \quad $ \Omega_f(0)$},\\
\nabla p_{f_0}
=
\mu \Delta \bm v_0 &
\textrm{in \quad $ \Omega_f(0)$},\\
\bm \nabla \cdot \bm \sigma_f(\bm v_0,p_{f_0})
=
0
 & \textrm{on \quad $ \Gamma_c(0)$}, \\
\partial_t p_f|_{t=0} \bm n
=
S_1 \bm n + E_1 \bm n
& \textrm{on \quad $ \Gamma_c(0)$},\\
\bm \nabla \cdot
\rho_s
\Big[
S_1
+
\partial_t p_f|_{t=0} \Id
\Big]
=
\rho_f 
\bm \nabla \cdot E_1
& \textrm{on \quad $ \Gamma_c(0)$},\\
\rho_f
\Big(
2 (\nabla \cdot \bm v_0) \bm \nabla \cdot E_1
+
\bm \nabla \cdot E_2
\Big)
=
\bm \nabla \cdot
S_2 & \textrm{on \quad $ \Gamma_c(0)$},\\
\Big(E_2- 2 
\big(
(\nabla \cdot \bm v_0) S_3 + S_4 \big) \Big) \bm n
=
\rho_s S_2 \bm n
& \textrm{on \quad $ \Gamma_c(0)$}.
\end{cases}
\end{equation}
where
\begin{itemize}
\item $S_1= 
-\mu \Big( (\bm D(\bm v_0))^2 - 2 (\bm \nabla \bm v_0)^t \bm \nabla \bm v_0 \Big)
+
\bm \sigma_f(\bm v_0,p_{f_0}) 
S_3$.
\item $E_1= 2 \mu_s \bm \epsilon(\bm \xi_1)
+
\lambda_s(\nabla \cdot \bm \xi_1) \Id
+
\nabla \cdot \bm v_0 \Id$.
\item $
S_2
=
\partial_t^2 p_f|_{t=0} \Id
+
2 \partial_t p_f|_{t=0} 
S_3
+
p_{f_0}
S_4
+
2 \mu \bm \epsilon(\bm \nabla \cdot E_1)
-
2
\Big( (\bm D(\bm v_0))^2 - 2 (\bm \nabla \bm v_0)^t \bm \nabla \bm v_0 \Big)
S_3
+
2
\bm D(\bm v_0) S_4.
$
\item $E_2
=
2 \bm \nabla \bm \xi_1 \
E_1
+
2 \mu_s 
\big((\bm \nabla \bm \xi_1)^t
\bm \nabla \bm \xi_1
+
\lambda_s \bm \nabla \bm \xi_1
+
2 
\big(
(\nabla \cdot \bm v_0) S_3 + S_4 \big)$.
\item $S_3=(\nabla \cdot \bm v_0) \Id- (\bm \nabla \bm v_0)^t $.
\item 
$S_4
=
\dfrac{1}{\rho_f} \bm \nabla \cdot 
\big( \bm \nabla \cdot \bm \sigma_f(\bm v_0,p_{f_0}) \big) \Id
-
\dfrac{1}{\rho_f}
\bm \nabla 
\big( \bm \nabla \cdot \bm \sigma_f(\bm v_0,p_{f_0}) \big)
+ 2 \cof(\bm \nabla \bm v_0).$
\end{itemize}
These conditions are obtained from \eqref{system in ref conf} by considering $t=0$, differentiating in time once and twice \eqref{system in ref conf}$_1$, \eqref{system in ref conf}$_5$, \eqref{system in ref conf}$_7$ and \eqref{system in ref conf}$_8$ then considering $t=0$ and taking into consideration the following identities
\begin{align*}
\partial_t
\big(
(\bm \nabla \bm \ma)^{-1}\big) (.,0) 
= - \bm \nabla \bm v_0 \quad \textup{and} \quad
\partial_t 
\big(
\tdet(\bm \nabla \bm \ma)\big)(.,0)
=
\nabla \cdot \bm v_0
\qquad \textup{in} \quad \Omega_f(0).
\end{align*}
\begin{definition}
Let us define the following spaces


%
%

\[
S^T_m=L^\infty\big(0,T;H^m(\Omega_s(0))\big) \cap W^{m,\infty}\big(0,T;L^2(\Omega_s(0))\big) \qquad 0 \leq m \leq 4,
\]
\[
F^T_1=L^{\infty}\big(0,T;L^2(\Omega_f(0))\big) \cap L^2 \big (0,T;H^1(\Omega_f(0))\big),
\]
\[
F^T_2=L^{\infty}\big(0,T;H^2(\Omega_f(0))\big)
\cap 
H^1 \big(0,T;H^1(\Omega_f(0))\big)
\cap 
W^{1,\infty}\big(0,T;L^2(\Omega_f(0))\big),
\]
\begin{align*}
F^T_4
=
L^{\infty} \big(0,T;H^4(\Omega_f(0)) \big)
&\cap 
H^3   \big(0,T;H^1(\Omega_f(0)) \big)
\cap 
W^{2,\infty} \big(0,T;H^2(\Omega_f(0)) \big)
\cap 
W^{3,\infty} \big(0,T;L^2(\Omega_f(0)) \big),
\end{align*}
%
\begin{align*}
\mathcal{P}_3^T
=
L^{\infty} \big(0,T;H^3(\Omega_f(0)) \big)
&\cap 
H^3   \big(0,T;L^2(\Omega_f(0)) \big)
\cap 
W^{1,\infty} \big(0,T;H^2(\Omega_f(0)) \big)
\cap 
W^{2,\infty} \big(0,T;H^1(\Omega_f(0)) \big),
\end{align*}
\[
H_l^1\big(0,T;L^2(\Gamma_c(0))\big):= \{ \psi \in H^1 \big(0,T;(\Gamma_c(0))\big); \psi(0)=0 \}.
\]
Then, for $M>1$ and $T>0$ we define the following fixed point space
\begin{align*}
A^T_M
=\Big\{
(\bm {\breve v},\txi) \in F^T_4 \times S^T_4,\
\bm \txi(.,0)= \bm 0,\ \partial_t \bm \txi(.,0)=\bm \xi_1 \ \textup{in} \ \Omega_s(0) \
\textup{and} 
\ \norm \bm {\breve v}  \norm_{F^T_4} \leq M,
\ \norm \bm \txi \norm_{S^T_4} \leq M
\Big\}
:= A^T_{M_1} \times A^T_{M_2}
\end{align*}%
\end{definition}
After introducing the spaces needed, we are ready to state the main result of the work.
\begin{theorem}[Main Theorem]
Let \textup{(}$\bm v_0, \bm \xi_1, p_{f_0}$\textup{)} satisfy \eqref{intialcdtns} and \eqref{comp-cdtns}. 
Then, there exists $\overline{T}>0$ such that System \eqref{System at t=0} admits a unique solution defined on $(0,\overline{T})$ satisfying
\begin{align}\label{thm:2}
(\bm v \circ \bm{\mathcal{A}}, \bm \xi \circ \bm \varphi, p_f \circ \bm{\mathcal{A}})
\in
F^{\overline{T}}_4 
\times
S^{\overline{T}}_4
\times 
\mathcal{P}^{\overline{T}}_3
\end{align}
\begin{align}
\bm{\mathcal{A}} \in W^{1,\infty} \big(0,\overline{T};H^2(\Omega_f(0)) \big)\times
W^{2,\infty}\big(0,\overline{T};L^2(\Omega_f(0))\big)
\end{align}
and
\begin{align}
\bm \varphi \in S^{\overline{T}}_2.
\end{align}
\end{theorem}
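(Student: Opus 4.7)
The plan is to follow the three-layer strategy sketched in the introduction and realise the result as a fixed point on the space $A^T_M$. Given a candidate pair $(\bm{\breve v},\txi)\in A^T_M$, I would first freeze the geometry by constructing the ALE map $\tc$ via $\partial_t\tc(\bm{\tilde x},t)=\bm{\breve v}(\tc(\bm{\tilde x},t),t)$ with $\tc(\bm{\tilde x},0)=\bm{\tilde x}$, and by setting $\tp=\Id+\txi$. One then solves the partially linearized system obtained from \eqref{System at t=0} by evaluating every nonlinear coefficient ($\biajb$, $\tdet(\bm{\nabla}\tp)$, $\tdet(\bm{\nabla}\tc)$, $(\bm{\nabla}\tc)^{-1}$, $\cof(\bm{\nabla}\tc)$, etc.) at the frozen data $(\bm{\breve v},\txi)$, while keeping $(\vv,\xxi,\mfp)$ as genuine unknowns. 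The fixed point map $\Phi:(\bm{\breve v},\txi)\mapsto(\vv,\xxi)$ will then be shown to map $A^T_M$ into itself and to be a contraction once $\overline T$ is taken small enough.

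The heart of the construction is the partially linearized problem. Because the integrated Piola boundary condition \eqref{System at t=0}$_8$ is not compatible with the pointwise elastodynamic equation \eqref{System at t=0}$_5$, the strategy is to first solve an auxiliary problem in which \eqref{System at t=0}$_8$ is replaced by the natural boundary condition \eqref{Elast-natural-bdrycdtn}. I would pass to a weak formulation on a test space of functions satisfying the transformed divergence-free condition $\nabla\cdot(\tdet(\bm{\nabla}\tc)(\bm{\nabla}\tc)^{-1}\,\cdot\,)=0$ and coupled across $\Gamma_c(0)$, so that the fluid pressure disappears. A Faedo--Galerkin scheme on a basis adapted to this space produces finite-dimensional approximations $(\vv_n,\xxi_n)$, for which energy identities obtained by testing with $\partial_t\vv_n$ and successive time derivatives yield a priori bounds in $F^T_4\times S^T_4$; the bookkeeping relies on Lemma~\ref{Piola-P} to rewrite $\bm{\nabla}\cdot\bm P$ and $\bm P\bm{\tilde n}$ via the symmetric coefficients $\biajb$, so that repeated differentiation in time preserves divergence form and integrations by parts close. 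Compactness through the Aubin--Lions--Simon theorem allows passage to the limit, and uniqueness for the auxiliary problem follows from an energy estimate on the difference of two solutions.

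Once the auxiliary problem is solved, one checks that \eqref{System at t=0}$_8$ is automatically recovered: since $\bm P(\bm 0)=0$ and $((\tdet(\bm{\nabla}\tp)-1)\cof(\bm{\nabla}\tp))(\cdot,0)=0$ on $\Gamma_c(0)$ thanks to $\txi(\cdot,0)=\bm 0$, integrating in time the natural boundary condition via the identities \eqref{Normal-coeff} and \eqref{Normal-diajb} yields \eqref{System at t=0}$_8$; hence the auxiliary solution also solves the partially linearized system. Iterating the fixed-point argument on $A^T_M$ with bounds uniform on a short time interval produces a unique local solution of the full nonlinear system. The pressure $\mfp$ is then recovered from the momentum equation through a De Rham / inf-sup argument on the transformed divergence operator, placing $\mfp\in\mathcal P^{\overline T}_3$. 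The regularity of $\bm{\mathcal{A}}$ and $\bm\varphi$ asserted in the statement follows from their ODE/algebraic definition combined with the Sobolev embedding of $\vv\in F^{\overline T}_4$ and $\xxi\in S^{\overline T}_4$.

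The main obstacle I expect is closing the high-regularity estimates. The quasi-incompressibility contribution $\diajb$ is a degree-four polynomial in $\bm{\nabla}\xxi$ and the Piola part $\ciajb$ is already cubic, so energy identities at three time derivatives generate many multilinear terms that must be handled through $H^2\hookrightarrow L^\infty$, Moser-type tame estimates, and the $W^{3,\infty}(0,T;L^2)$ control built into the spaces $F^T_4$ and $S^T_4$. Because the elastodynamic equation is purely hyperbolic, balancing its energy against the parabolic fluid estimates is only possible thanks to the kinematic trace identity $\vv=\partial_t\xxi$ on $\Gamma_c(0)$, which forces the compatibility conditions \eqref{comp-cdtns} at every order and thereby limits how large $\overline T$ can be. The contraction step at the fixed-point level is technically delicate for the same reason, since the difference of two solutions loses one derivative compared to the solutions themselves and one must compensate by using the smallness of $\overline T$.
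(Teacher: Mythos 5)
Your plan correctly tracks the paper's overall architecture (rewrite the Piola tensor through the coefficients $\biajb$, freeze them at $(\bm{\breve v},\txi)\in A^T_M$, set up the auxiliary problem with the natural boundary condition, run a Faedo--Galerkin argument in the pressure-free space, bootstrap regularity to $F^T_4\times S^T_4$, close with a contraction on $A^T_M$ in a weaker norm, and recover the pressure from the inf-sup condition). But there is a genuine gap in the step where you pass from the auxiliary problem to the linearized system. You assert that ``integrating in time the natural boundary condition via the identities \eqref{Normal-coeff} and \eqref{Normal-diajb} yields \eqref{System at t=0}$_8$; hence the auxiliary solution also solves the partially linearized system.'' That is false for the \emph{frozen} coefficients. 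The identities \eqref{Normal-coeff} and \eqref{Normal-diajb} express $\bm P(\bm\xxi)\tilde{\bm n}$ as a time integral because there the coefficients $\ciajb(\bm\nabla\xxi)$ depend on the \emph{same} unknown $\xxi$, so $\partial_s(\bm P_{i\alpha}) = \sum\ciajb\partial_{s\beta}^2\xit_j$ holds by the chain rule of Lemma \ref{Piola-P}. In the linearized problem the coefficients are $\tbiajb = \biajb(\bm\nabla\txi)$, evaluated at the fixed datum $\txi$, and the chain-rule identity breaks: integrating the natural boundary term by parts in time gives
\begin{align*}
\sum_{\alpha,j,\beta}\tbiajb(t)\,\partial_\beta\xit_j(t)\,\tilde n_\alpha
=
\sum_{\alpha,j,\beta}\Bigl(\int_0^t \tbiajb\,\partial_{s\beta}^2\xit_j\,ds\Bigr)\tilde n_\alpha
+
\sum_{\alpha,j,\beta}\Bigl(\int_0^t \partial_s\tbiajb\,\partial_\beta\xit_j\,ds\Bigr)\tilde n_\alpha,
\end{align*}
so the auxiliary problem with $\bm g=\bm 0$ satisfies \eqref{System at t=0}$_8$ only modulo the last, solution-dependent integral.

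The paper handles this by keeping an arbitrary source $\bm g\in H^1_l(L^2(\Gamma_c(0)))$ in the auxiliary problem and then setting $g_i=-\sum_{\alpha,j,\beta}\bigl(\int_0^t\partial_s\tbiajb\,\partial_\beta\hat\xi_j\,ds\bigr)\tilde n_\alpha$ for a \emph{second} unknown $\hat{\bm\xi}$, obtaining a nested contraction $\Psi_0:\hat{\bm\xi}\mapsto\bm\xxi$ on $S^T_2$ (Proposition \ref{Ex-Uni-Linear}); only the fixed point of $\Psi_0$ solves the linearized system, and only then is the regularity in $F^T_4\times S^T_4$ established (Proposition \ref{Regularity-LinearSystem}) before the outer contraction $\Psi$ is run. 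Without this inner fixed point the chain of implications in your proposal does not close. Two smaller remarks: (i) you define $\tc$ by the Eulerian ODE $\partial_t\tc=\bm{\breve v}(\tc,t)$, whereas $\bm{\breve v}$ already lives on $\OF$; the paper's definition \eqref{GivenFlow} is the direct Lagrangian integral $\tc=\bm{\tilde x}+\int_0^t\bm{\breve v}(\bm{\tilde x},s)\,ds$, which is linear in $\bm{\breve v}$ and is what Lemma \ref{Estimates on Flow} is proved for; (ii) the Galerkin step of Lemma \ref{Ex-Uni-AuxPro} only delivers $F^T_1\times S^T_1$ bounds -- the $F^T_4\times S^T_4$ estimates come later, from time-differentiating the weak form of the already-constructed linearized solution and then using elliptic regularity in space, not at the Galerkin level.
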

%
%
%
%
%
%
For simplicity, for all $m,r \geqslant 0$ \ and $ p,q \in [1,+\infty]$, we denote the spaces
$W^{m,p} \big(0,T;W^{r,q}(\Omega_f(0))\big)$
and \\
$W^{m,p} \big(0,T;W^{r,q}(\Omega_s(0))\big)$
by
$W^{m,p}\big(W^{r,q}(\Omega_f(0))\big)$
and
$W^{m,p}\big(W^{r,q}(\Omega_s(0))\big)$, respectively.\\
Also the domain's notation is simplified by writing $\Omega_f(0)=\OF$, $\Omega_s(0)=\OS$ and $\Omega(0)=\Omega_0$. Further, For all $t>0$, define
\[\Sigma_t = \Gamma_c(0) \times (0,t).
\]
%


\section{A Partially Linear System}
\label{Partially linear system}

Let ($\bm v_0,\bm \xi_1,p_{f_0}$) satisfy  \eqref{intialcdtns} and \eqref{comp-cdtns}.
Let $0<T<1$ and consider ($\bm {\breve v},\bm {\breve \xi}$) $\in A^T_M$ to be given. For these given functions we define the associated fluid flow $\tc$ and structure deformation $\bm{\breve{\varphi}}$ by
\begin{equation}\label{GivenFlow}
\tca
(\bm {\tilde x},t)=\bm {\tilde x}+\int^{t}_{0} \bm {\breve v}(\bm {\tilde x},s) \ ds  \qquad \forall \ \bm {\tilde x}  \in \OF,
\end{equation}
and
\begin{equation}\label{GivenDeformation}
\bm{\breve{\varphi}}(\bm {\tilde x},t)=\bm {\tilde x}+ \bm {\breve \xi}(\bm {\tilde x},t)  \qquad \forall \ \bm {\tilde x} \in \OS.
\end{equation}
We use the given $(\bm {\breve v},\bm {\breve \xi})$ to partially linearize the non-linear system. Indeed, we consider the non-linear terms to be given in terms of $(\bm {\breve v},\bm {\breve \xi})$.
%
%
Let $T \leq 1/M^4$ and $M>1$. We shall repeatedly use the following two lemmas which provide
bounds on various norms of the deformation maps 
$\tca$ and 
$\tp$. We omit the proof, for, the bounds are obtained by simple calculations using the generalized Poincar\'e inequality \cite[Proposition III.2.38]{Boyer}, Gr\"onwal inequality and the embedding theorems \cite[Corollary 9.13]{Brezis}.
\begin{lemma}\label{Estimates on Flow}
For the fluid flow $\tca$ given by \eqref{GivenFlow} for a given $\bm {\breve v} \in A^T_{M_1}$, there exists a constant $C=C(\OF)>0$ and a constant $\kappa >0 $ such that
\begin{itemize}
\item[1-] $\norm \tca \norm_{W^{1,\infty}(H^4) \cap W^{3,\infty}(H^2) \cap W^{4,\infty}(L^2) \cap H^4(H^1)}
\leq C(1+M).$
\item[2-] $\norm \nc-\Id \norm_{W^{1,\infty}(H^3) \cap W^{3,\infty}(H^1) \cap H^4(L^2)  }
\leq CM.$
\item[3-] 
$\norm \nc \norm_{L^\infty(H^3)} \leq C$. 
\item[4-]
\label{4-}$\norm \textup{cof}(\nc) \norm_{L^\infty(H^3)}
\leq C$.
\item[5-]
$ \norm \partial_t \textup{cof}(\nc)(t) \norm_{L^2(H^3)}
\leq CT^{1/2}M$.
\item[6-] $\norm (\nc)^{-1}(t) \norm_{L^\infty} \leq C\norm \nc(t) \norm_{L^\infty}^2 \qquad \textrm{for} \ t\in[0,T]$.
\item[7-] 
$\norm \textup{cof}(\nc)-\Id \norm_\mathit{L^\infty(H^3)}
+
\norm (\nc)^{-1}-\Id \norm_\mathit{L^\infty(H^3)}
\leq  \mathit{CT^{\kappa}M}.
$
\item[8-] $\norm \partial_t (\nc)^{-1}(t) \norm_{L^r}
\leq
C \norm \bm \tv(t) \norm_{L^r}, \ \textrm{for} \ r \in [1,+\infty] \ \textrm{and} \ t \in [0,T].$
\item[9-]
$\norm \textup{det}(\nc) \norm_{L^\infty(H^3)}
\leq
CM$
\quad \textup{and} \quad
$\norm \partial_t \textup{det}(\nc) \norm_{L^\infty(H^2)}
\leq 
C M.
$
\item[10-]
$\norm \tdet(\nc)-1 \norm_{L^\infty(H^3)} \leq CT^\kappa M.$
\end{itemize}
\end{lemma}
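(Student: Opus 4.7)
The strategy is to exploit the integral representation \eqref{GivenFlow} to transfer the regularity of $\bm{\breve v} \in A^T_{M_1}$ directly to $\tca$, and then to regard $\cof(\nc)$, $\tdet(\nc)$ and $(\nc)^{-1}$ as nonlinear algebraic expressions in the nine entries of $\nc = \Id + \int_0^t \bm \nabla \bm{\breve v}\,ds$. I will repeatedly use that $H^3(\OF)$ is a Banach algebra in dimension three and that the hypothesis $T \leq 1/M^4$, with $M > 1$, lets us absorb positive powers of $M$ into positive powers of $T$.

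For items 1 and 2, the identity $\partial_t^k \tca = \partial_t^{k-1} \bm{\breve v}$ for $k \geq 1$ transfers each seminorm of $\bm{\breve v}$ in $F^T_4$ to the corresponding seminorm of $\tca$, while Minkowski's inequality applied to $\int_0^t \bm \nabla \bm{\breve v}(s)\,ds$ delivers the $L^\infty(H^m)$ pieces of items 1--2; the $+1$ in item 1 accounts for the norm of the identity map $\bm{\tilde x}$. Item 3 is immediate from item 2 by the triangle inequality. Items 4 and 9 are obtained by writing $\cof(\nc)$ and $\tdet(\nc)$ as (respectively) quadratic and cubic polynomials in the entries of $\nc$ and invoking the algebra property together with item 3. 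Differentiating these polynomials in time produces terms that are linear in $\partial_t \nc = \bm \nabla \bm{\breve v}$ times polynomials in $\nc$, from which item 5 follows via $\|\bm \nabla \bm{\breve v}\|_{L^2(0,T;H^3)} \leq T^{1/2}\|\bm{\breve v}\|_{L^\infty(H^4)} \leq T^{1/2}M$, and the $L^\infty(H^2)$ part of item 9 is obtained the same way.

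Items 6 and 8 are handled through Cramer's rule $(\nc)^{-1} = \cof(\nc)^t/\tdet(\nc)$. Item 6 rests on a uniform lower bound for $\tdet(\nc)$: since $\tdet(\nc)(0) = 1$ and, by item 9 combined with the Sobolev embedding $H^2 \hookrightarrow L^\infty$ in $\R^3$, $\|\partial_t \tdet(\nc)\|_{L^\infty(L^\infty)} \leq CM$, the choice $T \leq 1/M^4$ forces $\tdet(\nc) \geq 1/2$ pointwise for $M$ large enough, after which the entrywise bound $|\cof(\nc)| \leq C|\nc|^2$ delivers item 6. Item 8 then follows by differentiating $(\nc)(\nc)^{-1} = \Id$, giving $\partial_t (\nc)^{-1} = -(\nc)^{-1} \bm \nabla \bm{\breve v}\, (\nc)^{-1}$, and using the $L^\infty$ bound on $(\nc)^{-1}$ just obtained. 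For items 7 and 10, each of $\cof(\nc) - \Id$, $(\nc)^{-1} - \Id$ and $\tdet(\nc) - 1$ vanishes at $t=0$; the fundamental theorem of calculus combined with the $L^\infty(H^3)$-bound on the corresponding time derivatives (from the algebra property and items 2--3, together with item 8 for the inverse) yields a factor $T$, hence the $T^\kappa M$ estimate.

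The main obstacle is the careful bookkeeping in items 5, 7 and 8, where one must simultaneously track the Sobolev index, the exponent of $T$ needed to exploit the smallness $T \leq 1/M^4$, and the power of $M$ accumulated along the way. The linchpin of the whole argument is the uniform lower bound on $\tdet(\nc)$: this is essentially the only place where the precise smallness assumption on $T$ is genuinely used, and it is what makes Cramer's formula available and drives all subsequent inverse and ``close-to-identity'' estimates.
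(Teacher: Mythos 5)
The paper explicitly omits this proof, saying only that the bounds follow from the generalized Poincar\'e inequality, Gr\"onwall's inequality, and embedding theorems. Your argument is correct and furnishes a more explicit and arguably cleaner route: you replace Gr\"onwall by Cramer's rule $(\nc)^{-1} = \cof(\nc)^t/\tdet(\nc)$, which reduces the inverse-matrix estimates to algebraic statements about polynomials in the entries of $\nc$, handled uniformly by the $H^3$ Banach-algebra property, with the smallness $T\leq 1/M^4$ entering through the uniform positivity of $\tdet(\nc)$. Two points should be sharpened. First, for item~7 you cite item~8 to control $\partial_t(\nc)^{-1}$, but item~8 gives only $L^r$ control, whereas the fundamental-theorem-of-calculus argument requires an $L^\infty(H^3)$ bound on $\partial_t(\nc)^{-1}$. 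To obtain this you must first establish $(\nc)^{-1}\in L^\infty(H^3)$ (via Cramer's rule together with a Moser-type composition estimate for $1/\tdet(\nc)$, available once $\tdet(\nc)$ is bounded below and lies in $L^\infty(H^3)$ by item~9), and then apply the $H^3$ algebra property to $\partial_t(\nc)^{-1}=-(\nc)^{-1}\,\tv\,(\nc)^{-1}$; you invoke the algebra property but do not quite close this loop. Second, your claim that $T\leq 1/M^4$ ``forces $\tdet(\nc)\geq 1/2$ for $M$ large enough'' is misleading: under $T\leq 1/M^4$ alone one only gets $\tdet(\nc)\geq 1-C/M^3$, which need not be positive for $M$ near $1$; the correct formulation, consistent with Remark~\ref{Lower bound on det}, is $\tdet(\nc)\geq 1-CT^\kappa M$, with positivity secured by taking $T$ small with respect to $M$, a restriction invoked throughout the paper but not implied by $T\leq 1/M^4$ alone.
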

\begin{remark}
\label{Lower bound on det}
The last part of Lemma \ref{Estimates on Flow} gives
\begin{align*}
\norm \tdet(\nc)-1 \norm_{L^\infty(L^\infty)} \leq CT^\kappa M.
\end{align*}
That is, for all $t$ in $(0,T)$ and $\bm {\tilde x}$ in $\OF$ we have
\[
-CT^\kappa M
\leq
\tdet(\bm \nabla \tca)(\bm {\tilde x},t)- 1 \leq CT^\kappa M.
\]
This gives 
\begin{align*}
\tdet(\bm \nabla \tca)(\bm {\tilde x},t) \geq 1-CT^\kappa M \qquad \forall \ (\bm {\tilde x},t) \in \OF \times (0,T).
\end{align*}
\end{remark}
\begin{remark}
For $0 < n \leq 4$, the quantity $CTM^n$ can be approximated by $CT^\kappa M$, with $\kappa >0$. Indeed, as $TM^4<1$, then we can find $\kappa>0$ such that $TM^4 \leq T^\kappa$.
\end{remark}
%
%


%
%
%
\begin{lemma}\label{Estimate-structure-defo}
Let $M>1$, $T>0$ and $\bm {\breve \xi} \in A^T_{M_2}$ be given. There exists $C>0$ such that for all $i,\al,j,\be \in \{1,2,3\}$, we have:
\begin{itemize}
\item[1-]
\begin{equation}\label{lq-boundedness}
\Big \lvert \Big\lvert \lciajb(\bm \nabla \bm {\breve \xi})+\qciajb(\bm \nabla \bm {\breve \xi}) \Big \lvert \Big\lvert_{S^T_3} \leq C(M+M^2)
\end{equation}
where $\lciajb \ \textrm{and} \ \qciajb$ are defined by the expressions \eqref{linearpart} and \eqref{quadraticpart} respectively.
\item[2-]
For any matrix $\bm A \in \mathcal{M}_3(\R)$, we have
\begin{equation}\label{coercivity}
\sum_{i,\al,j,\be=1}^3 \ciajb(\bm \nabla \bm {\breve \xi})A_{j\be}A_{i \al}
\geq
\frac{\mu_s}{2}    \lvert A+A^t \lvert^2
+\lambda_s            \lvert \textup{tr}(A) \lvert^2
-CT(M+M^2)           \lvert A \lvert^2.
\end{equation}


\item[3-]

\begin{equation}\label{lq1-boundedness}
\Big \lvert \Big\lvert
\diajb^l(\bm \nabla \bm {\breve \xi})
+
\diajb^{Q}(\bm \nabla \bm {\breve \xi})
+
\diajb^{T}(\bm \nabla \bm {\breve \xi})
+
\diajb^{F}(\bm \nabla \bm {\breve \xi})
\Big \lvert \Big\lvert_{S^T_3} \leq C(M+M^2+M^3+M^4).
\end{equation}

\item[4-]
For any matrix $\bm A \in \mathcal{M}_3 (\R)$ we have

\begin{equation}\label{coercivity1}
\sum_{i,\al,j,\be=1}^3 \diajb(\bm \nabla \bm {\breve \xi})A_{j\be}A_{i \al}
\geq
\mathsf{C}           \lvert \textup{tr}(A) \lvert^2
-CT(M+M^2+M^3+M^4)           \lvert A \lvert^2.
\end{equation}


\item[5-]
\begin{align}
\norm \bm \nabla \tp \norm_{L^\infty(H^2(\OS))}
\leq 
C.
\end{align}


\item[6-]
\begin{align}\label{bound of cofactor}
\norm \textup{cof}(\bm \nabla \tp) \norm_{L^\infty(H^2(\OS))}
\leq C
\quad
\text{and}
\quad
\norm \textup{cof}(\bm \nabla \bm {\breve \varphi}) \norm_{L^2(H^2(\OS))}
\leq
CT^{1/2}.
\end{align}


\item[7-]
We have 
\begin{align}\label{det-struct}
\norm \tdet(\bm \nabla \tp) \norm_{L^\infty(H^2)} 
\leq
C 
\quad \text{and} \quad 
\norm \partial_t \tdet(\bm \nabla \tp) \norm_{L^\infty(H^2)} 
\leq
CM.
\end{align}

\item[8-]
\begin{align}
\norm (\bm \nabla \bm {\breve \varphi})^{-1} \norm_{L^\infty(H^2(\OS))}
\leq C.
\end{align}

\item[9-]
For $\bm {\breve \xi} \in A^T_{M_2}$ we have   
\begin{align}\label{bound of det}
\norm \textup{det}(\bm \nabla \tp) - 1 
\norm_{L^\infty(H^2(\OS))}
\leq CTM.
\end{align}


\end{itemize}

\end{lemma}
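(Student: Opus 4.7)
The plan rests on two complementary tools. First, since $\OS \subset \R^3$, the embedding $H^2(\OS)\hookrightarrow L^\infty(\OS)$ makes $H^3(\OS)$ a Banach algebra, so any polynomial of degree $k$ in a function $u \in S^T_3$ is bounded in $S^T_3$ by $C\|u\|_{S^T_3}^k$. Second, the initial condition $\bm{\breve\xi}(\cdot,0)=\bm 0$ combined with standard interpolation in the intersection space $S^T_4 = L^\infty(H^4)\cap W^{4,\infty}(L^2)$ gives $\partial_t \bm{\breve\xi} \in L^\infty(0,T;H^3(\OS))$ with norm $\leq CM$, so that
\[
\|\nabla \bm{\breve\xi}(t)\|_{H^2(\OS)} \leq \int_0^t \|\partial_s \nabla \bm{\breve\xi}(s)\|_{H^2}\,ds \leq CTM.
\]
Since $TM^4 \leq 1$, the product $TM$ is small; this is the source of every $T$-factor appearing in the lemma.

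For parts 1 and 3, the quantities $c^l, c^q$ and $d^L, d^Q, d^T, d^F$ are, by construction, polynomials of degrees $1,2$ and $1,2,3,4$ in $\nabla \bm{\breve\xi}$, and the algebra property immediately yields the bounds $C(M+M^2)$ and $C(M+M^2+M^3+M^4)$ in the $S^T_3$-norm. For part 2, direct index contraction of the constant (Lam\'e) block of \eqref{SVKM}, together with the identity $|A+A^t|^2 = 2|A|^2 + 2\operatorname{tr}(A^2)$, produces exactly $\tfrac{\mu_s}{2}|A+A^t|^2 + \lambda_s|\operatorname{tr}(A)|^2$. The $c^l$ and $c^q$ contributions are bounded pointwise by $C(|\nabla\bm{\breve\xi}|+|\nabla\bm{\breve\xi}|^2)|A|^2$, and using the $L^\infty$-smallness $\|\nabla \bm{\breve\xi}\|_{L^\infty(\OS)}\leq CTM$ these are absorbed into $-CT(M+M^2)|A|^2$. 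Part 4 is the analogue: a Taylor expansion of $(\tdet(\bm\nabla\bm\varphi)-1)\cof(\bm\nabla\bm\varphi)$ at $\bm\nabla\bm\xi=\bm 0$ gives leading term $\operatorname{tr}(\bm\nabla\bm{\breve\xi})\,\Id$, so the constant part of $d_{i\alpha j\beta}$ equals $\delta_{i\alpha}\delta_{j\beta}$ and contributes $|\operatorname{tr}(A)|^2$; higher-degree pieces absorb into the $T$-small remainder exactly as before.

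For parts 5--9 one writes $\bm\nabla \tp = \Id + \bm\nabla \bm{\breve\xi}$. Part 5 is the triangle inequality together with $\|\bm\nabla \bm{\breve\xi}\|_{L^\infty(H^2)}\leq CTM \leq C$. Parts 6 and 7 invoke the explicit polynomial expressions of $\cof$ and $\tdet$ in the entries of $\bm\nabla \tp$ together with the $H^2$-algebra property; the $\partial_t$ bound uses Jacobi's formula $\partial_t \tdet(\bm\nabla\tp) = \tdet(\bm\nabla\tp)\,\operatorname{tr}\bigl((\bm\nabla\tp)^{-1}\partial_t \bm\nabla \tp\bigr)$ and the interpolation estimate $\|\partial_t \bm\nabla \bm{\breve\xi}\|_{L^\infty(H^2)}\leq CM$. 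Part 9 is the Taylor expansion $\tdet(\Id+\bm\nabla\bm{\breve\xi})-1 = \operatorname{tr}(\bm\nabla\bm{\breve\xi}) + O(|\bm\nabla\bm{\breve\xi}|^2)$, both contributions being $\leq CTM$. The single delicate step is part 8: one first needs a pointwise lower bound $\tdet(\bm\nabla\tp)\geq 1/2$, supplied by part 9 and the smallness of $TM$; one then writes $(\bm\nabla \tp)^{-1} = \cof(\bm\nabla \tp)^t/\tdet(\bm\nabla\tp)$ and verifies $1/\tdet \in L^\infty(H^2(\OS))$ through the convergent expansion $1/\tdet = \sum_{n\geq 0}(1-\tdet)^n$ in $H^2$, legitimate because $\|1-\tdet\|_{L^\infty}<1/2$. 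This $H^2$-inversion is the main obstacle; once it is settled, parts 6--8 close by a short bootstrap.
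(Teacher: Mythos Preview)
Your proposal is correct and matches the approach the paper itself indicates: the paper omits the proof of this lemma entirely, stating only that the bounds follow from ``simple calculations using the generalized Poincar\'e inequality, Gr\"onwall inequality and the embedding theorems,'' which is precisely your toolkit (the $H^2\hookrightarrow L^\infty$ algebra property, the time-integration bound $\|\nabla\bm{\breve\xi}(t)\|_{H^2}\le\int_0^t\|\partial_s\nabla\bm{\breve\xi}\|_{H^2}\,ds\le CTM$ exploiting $\bm{\breve\xi}(0)=0$, and interpolation in $S^T_4$). One small remark on part~4: your computation of the constant block of $d_{i\alpha j\beta}$ as $\delta_{i\alpha}\delta_{j\beta}$, yielding $|\operatorname{tr}(A)|^2$, is correct; the factor $\mathsf{C}$ appearing on the right-hand side of \eqref{coercivity1} in the paper is a typographical slip (the $\mathsf{C}$ enters only when one passes to $b=c+\mathsf{C}d$, as is done in the applications such as \eqref{structure-estimate1}).
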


The main step to establish the local in time existence and uniqueness of solution of the coupled problem is to partially linearize it. This is achieved by considering the non-linear terms to be given, thus the flow map and deformation are given by \eqref{GivenFlow} and \eqref{GivenDeformation} respectively.
For the given $\bm{\breve{\mathcal{A}}}, \bm{\breve \varphi} \ \textrm{and} \ (\bm {\breve v},\bm {\breve \xi}) \in A^T_M$ we denote $\biajb(\bm \nabla \bm {\breve \xi})$ by $ \tbiajb $ and the fluid shear stress is denoted by $\bm {\breve \sigma}^0_f(\bm \vv,\tilde{p}_f)$ when considering $\bm{\breve \mathcal{A}}$ in the expression $\eqref{fluidstress-intial0}$.
Now we write the system \eqref{System at t=0} in the reference configuration at time $t=0$.
Equation \eqref{System at t=0}$_1$ is replaced by
\[
\rho_f \tdet(\bm \nabla \tca) \partial_t \bm \vv -\bm \nabla \cdot \bm {\breve \sigma}^0_f(\bm \vv,\tilde{p}_f) = 0 
\quad \textrm{in}\ \OF\times (0,T)
\]
and Equation \eqref{System at t=0}$_5$ is replaced by
$$
\rho_s \tdet(\bm{\nabla \breve{\varphi}}) \partial^2_t \xit_i
-
\sum_{\al,j,\be=1}^{3}
\tbiajb 
\partial_{\al \be}^2 \xit_j 
=0,
\quad i=1,2,3  \quad \textrm{in}\ \OS\times (0,T).
$$
The coupling conditions on $\Sigma_T$ are given by
\begin{equation}
\begin{cases}
\bm \vv= \partial_t \bm \xxi,\\
\big[\bm{\breve \sigma}^0_f(\bm \vv,\tilde{p}_f) \ \bm {\tilde n}\big]_i =
\mathlarger{\sum}\limits_{\al,j,\be =1}^3 \biggl(\displaystyle \int_0^t \tbiajb \partial^2_{s \be} \xit_j ds \biggr)\tilde{n}_{\al}
\quad \textrm{for} \quad i=1,2,3.
\end{cases}
\end{equation}
For $(\bm {\breve v},\bm {\breve \xi})$ being given in $A^T_M$ , we introduce the following mapping
\[
\Psi: (\bm {\breve v},\bm {\breve \xi}) \longrightarrow (\bm \vv,\bm \xxi)
\]
where $(\bm \vv,\bm \xxi)$ together with $\tilde{p}_f$ form the solution of the partially linearized system.
\\
\par
First, we start by defining an auxiliary problem that considers the boundary condition \eqref{Elast-natural-bdrycdtn}. Choosing a suitable functional space we write the variational formulation where the pressure term disappears. Uniqueness and existence of solution of the auxiliary problem are established in the next section.

\section{An Auxiliary Problem}\label{An Auxiliary Problem}
As we mentioned before, there is a disagreement between the elasticity equation and the stress coupling condition on $\Sigma_T$ attributed to it. Thus, we set up an auxiliary problem in which the natural boundary condition \eqref{Elast-natural-bdrycdtn} is used. This problem constitutes the first tool in establishing the existence and uniqueness of the strong solution of the FSI problem.
We start by introducing the auxiliary problem.
Let  $\bm g =[g_1,g_2,g_3]^t$ be a function in 
$H^1_l \big(\left[0,T\right];L^2(\Gamma_c(0))\big)$,
and consider the following system:
\begin{equation}\label{Auxiliary-pro}
\begin{cases}
 \rho_f \tdet(\bm \nabla \tc)\partial_t \bm \vv -\bm \nabla \cdot \bm{\breve \sigma}^0_f(\bm \vv,	\tilde{p}_f) = 0 & \textrm{in $\OF \times (0,T) $}, 
  		 \\
    \nabla \cdot \big(\tdet(\bm {\nabla \tc})(\bm{\nabla \tc})^{-1} \bm \vv \big) = 0 & \textrm{in $ \OF \times (0,T), $} 						
    \\
    \bm \vv=\bm v_\textup{in} \circ \bm \tc & \textrm{on $\Gamma_{\textrm{in}}(0) \times (0,T)$, }
    \\
    \bm {\breve \sigma}^0_f(\bm \vv,\tilde{p}_f) \ \bm {\tilde n} = 0 & \textrm{on $\Gamma_{\textrm{out}}(0) \times (0,T)$, }		
    \\
   	\rho_s \tdet(\bm {\nabla \tp}) \partial^2_t \tilde{\xi}_i
   	- \displaystyle \sum_{\al,j,\be=1}^{3}
   	 \tbiajb \partial_{\al \be}^2 \xit_j
   	=0 
   	\quad i=1,2,3,
   	& 			 \textrm{in $\OS\times (0,T)$},
   	 \\
	\bm \xxi = \bm 0  & \textrm{on $ \Gamma_2(0) \times (0,T) $},
	\\
	\bm \vv = \partial_t \bm \xxi & \textrm{on $ \Gamma_c(0) \times (0,T) $},					
	\\
	\left[\bm{\breve \sigma}^0_f(\bm \vv,\tilde{p}_f) \bm {\tilde n} \right]_i
	=
	\displaystyle \sum_{\al,j,\be=1}^3
	\Big(
	 \tbiajb \partial_{\be} \xit_j
	\Big) \tilde{n}_{\al}
	+g_i \quad i=1,2,3,
	 & \textrm{on $\Gamma_c(0) \times (0,T)$},
	 \\
	\bm \vv(.,0)=\bm v_0 ,\quad \textrm{and} \quad \tilde{p}_f(.,0)=p_{f_0} & \textrm{in $\OF$,}			
	\\
   	\bm \xxi(.,0)=0 \quad \textrm{and} \quad  \partial_t \bm \xxi(.,0)=\bm \xi_1  & \textrm{in $\OS$}.
   
\end{cases}
\end{equation}
The following lemma states the existence and uniqueness of solution for the auxiliary problem.
\begin{lemma}\label{Ex-Uni-AuxPro}
Let $(\bm {\breve v},\bm {\breve \xi}) \in A^T_M$, $\bm v_0 \in L^2(\OF), \ \bm \xi_1 \in L^2(\OS)\ \textrm{and} \ p_{f_0} \in L^2(\OF) $. For $T$ small with respect to $M$ and the initial conditions, there exists a unique weak solution $(\bm \vv,\bm \xxi) \in F^T_1 \times S^T_1$ of \eqref{Auxiliary-pro}. In addition, this solution satisfies the following a priori estimate
\begin{equation}\label{a-priori-es1}
\norm \bm \vv \norm^2_{F^T_1} + \norm \bm \xxi \norm^2_{S^T_1}
\leq
C
\Big[
\dfrac{\rho_f}{2} \norm \bm v_0 \norm^2_{L^2(\OF)}
+
\dfrac{\rho_s}{2} \norm \bm \xi_1 \norm^2_{L^2(\OS)}
+
 \norm \bm g \norm^2_{H^1(L^2(\Gamma_c(0)))}
 \Big].
\end{equation}
\end{lemma}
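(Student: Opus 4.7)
The plan is to follow the Faedo--Galerkin strategy outlined in the introduction, combined with a divergence-free test space chosen so that the fluid pressure $\tilde p_f$ disappears from the weak formulation. Because the coefficients $\tbiajb$, $\tdet(\bm\nabla\tc)$, $(\bm\nabla\tc)^{-1}$, $\tdet(\bm\nabla\tp)$, etc.\ are all frozen in terms of the given couple $(\bm{\breve v},\bm{\breve\xi})\in A^T_M$, the system \eqref{Auxiliary-pro} is \emph{linear} in the unknown $(\bm\vv,\bm\xxi,\tilde p_f)$, and the main work reduces to producing an energy estimate whose constants depend on $M$ but not on the approximation parameter.

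First I would define the constrained test space
\[
\mathcal{V}(t)=\bigl\{(\bm w,\bm \eta)\in H^1(\OF)\times H^1(\OS)\ :\ \bm w=\bm 0\text{ on }\Gamma_{\mathrm{in}}(0)\cup\Gamma_2(0),\ \bm w=\bm \eta\text{ on }\Gamma_c(0),\ \nabla\cdot\bigl(\tdet(\nc)\inc \bm w\bigr)=0\bigr\},
\]
and a lifting $\bm v_{\mathrm{in}}\circ\tca$ to handle the inhomogeneous Dirichlet datum. Multiplying the fluid equation by $\bm w$, the elastodynamic equation by $\bm\eta$, integrating by parts over $\OF$ and $\OS$ and using the Piola identity together with the constraint in $\mathcal{V}(t)$ kills the $\tilde p_f\,\cof(\nc)$ contribution; the boundary terms on $\Gamma_c(0)$ combine via \eqref{Auxiliary-pro}$_8$ (the natural boundary condition) and leave only the linear datum $\int_{\Gamma_c(0)}\bm g\cdot\bm\eta$. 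The resulting weak formulation reads: find $(\bm\vv,\bm\xxi)$ with $(\bm\vv,\partial_t\bm\xxi)\in\mathcal V(t)$ a.e., satisfying the initial data and
\[
\rho_f\!\int_{\OF}\!\tdet(\nc)\partial_t\bm\vv\!\cdot\!\bm w
+\!\int_{\OF}\!\bm{\breve\sigma}^0_f(\bm\vv)\!:\!\bm\nabla\bm w
+\rho_s\!\int_{\OS}\!\tdet(\bm\nabla\tp)\partial_t^2\bm\xxi\!\cdot\!\bm\eta
+\!\sum_{i,\al,j,\be}\!\int_{\OS}\!\tbiajb\,\partial_\be\xit_j\partial_\al\eta_i
=\!\int_{\Gamma_c(0)}\!\bm g\!\cdot\!\bm\eta .
\]

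Next I would construct Galerkin approximations in a smooth countable basis of $\mathcal V(t)$ (whose existence follows from the standard divergence-free basis construction on $\OF$ transported through the Piola map of $\tca$ and matched on $\Gamma_c(0)$ with a basis of $H^1_{\Gamma_2(0)}(\OS)$). Each truncated problem is a linear ODE system with coefficients bounded and Lipschitz in $t$ by Lemmas \ref{Estimates on Flow} and \ref{Estimate-structure-defo}, hence admits a unique global solution. To obtain a priori estimates I test with $(\bm\vv_n,\partial_t\bm\xxi_n)$: the first two fluid terms produce $\tfrac{\rho_f}{2}\frac{d}{dt}\int_{\OF}\tdet(\nc)|\bm\vv_n|^2$ plus a controlled commutator in $\partial_t\tdet(\nc)$ (Lemma \ref{Estimates on Flow} items 9--10), together with a coercive viscous term $2\mu\int|\bm D_{\breve{\mathcal A}}(\bm\vv_n)|^2$. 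The structure terms yield $\tfrac{\rho_s}{2}\frac{d}{dt}\int_{\OS}\tdet(\bm\nabla\tp)|\partial_t\bm\xxi_n|^2$ plus, by the symmetry \eqref{symcoeff}, $\tfrac12\frac{d}{dt}\sum\int\tbiajb\partial_\be\xit_{n,j}\partial_\al\xit_{n,i}$ up to a harmless $\partial_t\tbiajb$ remainder controlled by $\|\partial_t\bm\nabla\bm{\breve\xi}\|$ and Lemma \ref{Estimate-structure-defo}. The coercivity estimates \eqref{coercivity} and \eqref{coercivity1} give a positive quadratic lower bound on the elastic energy minus $CT(M+\cdots+M^4)\|\bm\nabla\bm\xxi_n\|^2$, which for $T$ small relative to $M$ is absorbed into the left-hand side. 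The forcing $\int_{\Gamma_c(0)}\bm g\cdot\partial_t\bm\xxi_n$ is integrated by parts in time, using $\bm g(0)=0$ from the definition of $H^1_l$ and trace estimates, to yield a control by $\|\bm g\|^2_{H^1(L^2(\Gamma_c(0)))}$ plus a term absorbable by the structure energy. Grönwall then produces \eqref{a-priori-es1} uniformly in $n$.

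Finally I would pass to the limit by weak-$*$ compactness in $F^T_1\times S^T_1$, strengthened by Aubin--Lions--Simon to obtain strong $L^2$ convergence of $\bm\vv_n$ and $\partial_t\bm\xxi_n$, which suffices to identify the (linear) limits in every term of the weak formulation. Uniqueness follows from the same energy estimate applied to the difference of two solutions with zero data and $\bm g=0$, giving identically zero on $[0,T]$. The principal obstacle, in my view, is setting up the Galerkin basis of $\mathcal V(t)$ in a way that is compatible with the time-dependent Piola constraint while still matching velocities across $\Gamma_c(0)$; a clean way around it is to work in the time-independent space obtained by composing with the frozen Piola transform of $\tca$ and treating the resulting time-dependent bilinear form as a perturbation, since Lemma \ref{Estimates on Flow} items 7 and 10 make this perturbation small for $T$ small.
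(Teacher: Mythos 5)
Your approach is essentially the paper's: Faedo--Galerkin in a divergence-free test space (so that $\tilde p_f$ drops out via the Piola identity), an energy estimate absorbing the $CT^\kappa M$ remainders for $T$ small, Aubin--Lions--Simon for compactness, and uniqueness by the same estimate with zero data. The paper packages your pair $(\bm w,\bm\eta)$ as a single global $H^1(\Omega_0)$ function $\bm{\tilde\eta}\in\widetilde{\mathcal W}$, so the matching on $\Gamma_c(0)$ is built in rather than imposed; it introduces the auxiliary unknown $\bm{\tilde\gamma}$ equal to $\bm\vv$ on $\OF$ and to $\partial_t\bm\xxi$ on $\OS$ and recovers $\bm\xxi$ from $\int_0^t\bm{\tilde\gamma}(s)|_{\OS}\,ds$, which is exactly the isomorphism your $\mathcal V(t)$ with $\bm w=\bm\eta$ on $\Gamma_c(0)$ encodes. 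You also rightly flag the time-dependence of the constrained test space as the structural difficulty; the paper fixes a basis of $\widetilde{\mathcal W}$ once and writes the Galerkin ODE with time-varying coefficients, tacitly treating the constraint $\nabla\cdot\bigl(\tdet(\bm\nabla\tc)(\bm\nabla\tc)^{-1}\,\cdot\,\bigr)=0$ as though fixed, so your pull-back through the Piola map of $\tca$ with a perturbation argument from Lemma~\ref{Estimates on Flow} items 7 and 10 is actually more careful than what the text does.

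There is one concrete error in your displayed weak formulation. Equation \eqref{Auxiliary-pro}$_5$ is written in non-divergence form, so integrating by parts in $\al$ produces a lower-order remainder:
\[
-\sum_{i,\al,j,\be}\int_{\OS}\tbiajb\,\partial^2_{\al\be}\xit_j\,\eta_i\,d\bm{\tilde x}
=\sum_{i,\al,j,\be}\int_{\OS}\tbiajb\,\partial_\be\xit_j\,\partial_\al\eta_i\,d\bm{\tilde x}
+\sum_{i,\al,j,\be}\int_{\OS}\partial_\al\tbiajb\,\partial_\be\xit_j\,\eta_i\,d\bm{\tilde x}
+\textrm{(boundary)},
\]
and the middle term $\sum\int_{\OS}\partial_\al\tbiajb\,\partial_\be\xit_j\,\eta_i$ is absent from the weak form you wrote. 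The paper's \eqref{final-eqn-weak} carries it, and it enters genuinely into the estimates (see \eqref{structure-estimate2}). It is lower order and is ultimately absorbed into the $CT^\kappa M$ budget, but without it the weak solution you construct satisfies the divergence-form operator $-\sum\partial_\al(\tbiajb\partial_\be\xit_j)$, which is a different PDE from \eqref{Auxiliary-pro}$_5$; the cancellation of the $\Gamma_c(0)$ boundary term against \eqref{Auxiliary-pro}$_8$ only works out once this piece is included.
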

\begin{remark}
Taking $T$ small with respect to $M$ and the initial conditions, means that there exists $n_0>0$ and $\varepsilon$ positive such that
\begin{align*}
T \leq \Bigg\{\dfrac{\varepsilon}{M^{n_0}}, \dfrac{\varepsilon}
{h(\norm \bm v_0 \norm_{H^6(\OF)}, \norm \bm \xi_1 \norm_{H^3(\OS)}, \norm p_{f_0} \norm_{L^2(\OF)})} \Bigg\}.
\end{align*}
\end{remark}

\indent From here on, we simplify the notation
for all the norms by omitting the indication for the domain as it is always clear from
the context. For instance, we write $\norm \bm \vv \norm_{L^2} = \norm \bm \vv \norm_{L^2(\OF)}$ and $\norm \bm \xxi \norm_{L^2} = \norm \bm \xxi \norm_{L^2(\OS)}$.
%


%
\par
In order to prove Lemma \ref{Ex-Uni-AuxPro} we proceed as follows. First, we write the variational formulation corresponding to the coupled system using a divergence-free functional space. Then, we use a Faedo-Galerkin approach to find an approximation of the solution, which enables us to find some a priori estimates on the Galerkin sequences. Using the estimates and compactness results we prove the existence and uniqueness of the solution.

\subsection{Variational Formulation}
Consider the following divergence-free functional space
\begin{equation*}
\widetilde{\mathcal{W}}=
\Big\{
\bm {\tilde \eta} \in H^1(\Omega_0)|\  \nabla \cdot (\tdet(\bm {\nabla \tc})(\bm{\nabla \tc})^{-1} \bm {\tilde \eta})=0 \ \textrm{on} \ \OF
\ \textrm{and} \ \bm {\tilde \eta}=0 \ \textrm{on} 
\ \Omega_0 \setminus \tilde{\Gamma}_\textup{out}(0)
\Big\}.
\end{equation*}

Let $[[.,.]]$ denote the weighted $L^2$ inner product defined by
%
%
\begin{align*}
[[\bm {\tilde \gamma},\bm {\tilde \eta}]]= \int_{\OF} 
\rho_f \bm {\tilde \gamma} \cdot \bm {\tilde \eta} \ d\bm {\tilde x} 
+ 
\int_{\OS} 
\rho_s \bm {\tilde \gamma} \cdot \bm {\tilde \eta} \ d\bm {\tilde x} \qquad \forall \ \bm {\tilde \gamma}, \bm {\tilde \eta} \in \widetilde{\mathcal{W}}.
\end{align*} 
This norm is equivalent to the norm
$\norm \ . \ \norm_{L^2(\Omega_0)}$.

In order to derive the variational formulation of \eqref{Auxiliary-pro}, we multiply Equations \eqref{Auxiliary-pro}$_1$ and \eqref{Auxiliary-pro}$_5$ by a test function $\bm {\tilde \eta} \in \widetilde{\mathcal{W}}$, integrate by parts and take into consideration the boundary and the coupling conditions to get


\begin{equation}\label{final-eqn-weak}
\begin{cases}

\rho_f \displaystyle \int_{\OF} 
\tdet(\bm \nabla \tca) \partial_t \bm \vv \cdot \bm {\tilde \eta} \ d\bm {\tilde x}

+

\int_{\OF} \bm {\breve \sigma}^0_f (\bm {\tilde v}) :\bm \nabla \bm {\tilde \eta} \ d\bm {\tilde x}

+\rho_s  \int_{\OS}\tdet(\bm \nabla \tp) \partial^2_t \bm \xxi \cdot \bm {\tilde \eta} \ d\bm {\tilde x}

\vspace{1mm} \\

+
\displaystyle \sum_{i,\al,j,\be=1}^3 \int_{\OS} \tbiajb \partial_\be \xit_j \ \partial_\al \tilde {\eta}_i \ d\bm {\tilde x}

\displaystyle

+ \sum_{i,\al,j,\be=1}^3 \displaystyle \int_{\OS} \partial_\al \tbiajb \partial_\be \xit_j \ \tilde {\eta}_i \ d\bm {\tilde x}



=
\displaystyle
 \int_{\Gamma_c(0)} \bm g \cdot \bm {\tilde \eta} \ d\tilde{\Gamma}

\qquad \forall \ \bm {\tilde \eta} \in \widetilde{\mathcal{W}}.

\end{cases}
\end{equation}
Note that, the space $\widetilde{\mathcal{W}}$ is the transformation of the space
\begin{equation*}
\mathcal{W}=\Big\{ \bm {\eta} \in H^1(\Omega(t)) \mid
\nabla \cdot \bm {\eta}=0 \quad \textrm{on} \quad \Omega_f(t)
\quad \textrm{and} \quad \bm {\eta}=0 \quad \textrm{on} \quad \partial \Omega(t)\setminus \Gamma_\textup{out}(t)\Big\}.
\end{equation*}
This explains the disappearance of the pressure term $\tilde{p}_f$ from the weak formulation.
\begin{remark}
":" corresponds to the Hadamard product of matrices defined by
\begin{align*}
\bm A: \bm B = \sum_{i,j=1}^n A_{i,j} B_{i,j} ,\ \textrm{for} \ \bm A, \bm B \in \mathbb{M}_n(\R).
\end{align*}
\end{remark}

\par
In order to derive the weak formulation we consider a global test function $\bm {\tilde \eta}$ in $\widetilde{\mathcal{W}}$. This will simplify the work. In fact, rather than looking for two solutions using two independent test functions on each sub-domain, we search for one solution $\bm {\tilde \gamma}$ over the domain $\Omega_0$. By considering a global test function we are able to embed the stress condition into the formulation in such a way that it would cancel out on the entire domain.
Further, we will guarantee the existence of a weak solution $\bm {\tilde \gamma}$ in $\widetilde{\mathcal{W}}$. Consequently, $\bm \vv$ and $\bm \xxi$ are considered to be the restriction of $\bm {\tilde \gamma}$ on the sub-domains $\OF$ and $\OS$, respectively. Note that, if we consider the restriction of $\bm {\tilde \eta}$ on the two sub-domains $\OF$ and $\OS$, we cannot guarantee the existence of the weak solutions in the restriction of $\widetilde{\mathcal{W}}$ on each sub-domain. Thus, we introduce the auxiliary function $\bm {\tilde \gamma}$ defined by
\begin{equation}\label{auxiliary-fn}
\bm {\tilde \gamma}=
\begin{cases}
\bm \vv & \textrm{in} \ \OF,\\
\partial_t \bm \xxi & \textrm{in} \ \OS,
\end{cases}
\qquad \textrm{and} \qquad
\bm {\tilde \gamma}_0=
\begin{cases}
\bm v_0 & \textrm{in} \ \OF,\\
\bm \xi_1 & \textrm{in} \ \OS,
\end{cases}
\end{equation}
which is a continuous function on $\Omega_0$, due to the continuity of velocities across the interface $\Gamma_c(0)$ which is given by the condition \eqref{Auxiliary-pro}$_7$.
%
%
%
By this definition, we can write $\bm \vv(t)=\bm {\tilde \gamma}(t)$ on $\OF$, and $\bm \xxi(t) = \int_0^t \bm {\tilde \gamma}(s) ds $ on $\OS$, based on the fact that $\bm \xxi(0)=\bm \xi_0=0$. Then, for all test functions $\bm {\tilde \eta}$ in $\widetilde{\mathcal{W}}$ the weak formulation \eqref{final-eqn-weak} is equivalent to 
%
%
\begin{equation}\label{final-eqn-weak-1}
\begin{cases}
\rho_f \displaystyle \int_{\OF} 
\tdet(\bm \nabla \tca)\partial_t \bm {\tilde \gamma} \cdot \bm {\tilde \eta} \ d\bm {\tilde x}

+\rho_s \int_{\OS} \tdet(\bm \nabla \tp) \partial_t \bm {\tilde \gamma} \cdot \bm {\tilde \eta} \ d\bm {\tilde x}

\vspace{1mm}\\

\displaystyle
+\int_{\OF} \bm {\breve \sigma}^0_f(\bm {\tilde \gamma}) : \bm \nabla \bm {\tilde \eta} \ d\bm {\tilde x}

+\displaystyle \sum_{i,\al,j,\be=1}^3 \int_{\OS} \tbiajb \partial_\be ( \mathsmaller \int_0^t  \tilde{\gamma}(s) ds)_j \ \partial_\al \tilde {\eta}_i \ d\bm {\tilde x}

\vspace{1mm}\\

+ \displaystyle \sum_{i,\al,j,\be=1}^3 \int_{\OS} \partial_\al \tbiajb \partial_\be (\mathsmaller \int_0^t  \tilde{\gamma}(s) ds)_j \ \tilde {\eta}_i \ d\bm {\tilde x}

=
\displaystyle \int_{\Gamma_c(0)} \bm g \cdot \bm {\tilde \eta} \ d\tilde{\Gamma},


\vspace{1mm} \\

\bm \tilde{\gamma}(0) = \bm \tilde{\gamma}_0,

\vspace{1mm} \\

\displaystyle \int_0^t \Big(\bm {\tilde \gamma}(s)|_{\OF} \Big) \bigg|_{\Gamma_c(0)} \ ds
=
 \int_0^t \Big(\bm {\tilde \gamma}(s)|_{\OS} \Big) \bigg|_{\Gamma_c(0)} \ ds, \qquad \forall \ t \in [0,T].

\end{cases}
\end{equation}


\subsection{Galerkin Approximation}
In order to show that the system admits a unique solution we will use a Faedo-Galerkin approach.
Let $ \{\bm \psi_l\}_{l=1}^n$ be a basis of $\widetilde{\mathcal{W}}$ in $L^2(\Omega_0)$ which is orthogonal for the $H^1$-Norm and orthonormal for the $L^2$-Norm.
%
\\
Take
$\widetilde{\mathcal{W}}_n = \textrm{span} \{\bm \psi_1, \ldots ,\bm  \psi_n \}.$
We seek to find a Galerkin approximation
$\{\bm {\tilde \gamma}_n\}_n \in \mathcal{C}^1(0,T;\widetilde{\mathcal{W}}_n) $ of the form
\addtolength{\parindent}{-5mm}
\begin{align}\label{galerkin-form}
\bm {\tilde \gamma}_n = \sum_{l=1}^n f^n_l(t) \bm \psi_l(\bm {\tilde x})
\end{align}
satisfying
\begin{equation}\label{Galerkin-coupling-formulation}
\begin{cases}
\rho_f \displaystyle \int_{\OF} 
\tdet(\bm \nabla \tca)\partial_t \bm {\tilde \gamma}_n \cdot \bm {\tilde \eta}_n \ d\bm {\tilde x}

+\rho_s \displaystyle \int_{\OS} \tdet(\bm \nabla \tp) \partial_t \bm {\tilde \gamma}_n \cdot \bm {\tilde \eta}_n \ d\bm {\tilde x}

\vspace{1mm}\\
+
\displaystyle \int_{\OF} \bm {\breve \sigma}^0_f(\bm {\tilde \gamma}_n) :\bm \nabla \bm {\tilde \eta}_n \ d\bm {\tilde x}

+\displaystyle\sum\limits_{i,\al,j,\be=1}^3 \displaystyle \int_{\OS} 
\tbiajb \partial_\be (\mathsmaller \int_0^t  \tilde{\gamma}_n(s) ds)_j \ \partial_\al \tilde{\eta}_{n,i} \ d\bm {\tilde x} 

\vspace{1mm}\\
+ \displaystyle \sum\limits_{i,\al,j,\be=1}^3 \displaystyle \int_{\OS} \partial_\al \tbiajb \partial_\be (\mathsmaller \int_0^t  \tilde{\gamma}_n(s) ds)_j \ \tilde{\eta}_{n,i} \ d\bm {\tilde x}

=
\displaystyle \int_{\Gamma_c(0)} \bm  g \cdot \bm {\tilde \eta}_n \ d\tilde{\Gamma},

\quad \forall \ \bm {\tilde \eta}_n \in \widetilde{\mathcal{W}}_n,

\end{cases}
\end{equation}
and
\begin{align}\label{galerkin-initial-cdtn}
[[\bm {\tilde \gamma}_n(0), \bm {\tilde \eta}_n]]= [[\bm {\tilde \gamma}_0,\bm {\tilde \eta}_n]] , \qquad \forall \ \bm {\tilde \eta}_n \in \widetilde{\mathcal{W}}_n.
\end{align}
Notice that, trivially  $\bm {\tilde \gamma}_n$ defined in \eqref{galerkin-form} satisfies
%
%
%
\begin{align}\label{galerkin-boundary}
\displaystyle
\int_0^t \Big(\bm {\tilde \gamma}_n(s)|_{\OF} \Big) \bigg|_{\Gamma_c(0)} \ ds
=
\int_0^t \Big(\bm {\tilde \gamma}_n(s)|_{\OS} \Big) \bigg|_{\Gamma_c(0)} \ ds, \qquad \forall \ t \in [0,T].
\end{align}
%
%
We can write \eqref{Galerkin-coupling-formulation}-\eqref{galerkin-initial-cdtn} as an equivalent system of first-order, linear ordinary differential equation (ODE) for $\{f_l^n\}_{l=1}^n$.\\
Set $h^n_l(t)=\int_0^t f^n_l(s) ds$ for $l=1,\cdots,n$. For $1 \leq k \leq n$, the problem \eqref{Galerkin-coupling-formulation}-\eqref{galerkin-initial-cdtn} is equivalent to the following ODE initial value problem
\begin{equation}\label{ODE}
\begin{cases}
\displaystyle
\sum_{l=1}^n 
\dfrac{d}{dt} f_l^n(t) 
\Bigg[

\rho_f
\int_{\OF} \tdet(\bm \nabla \tca) \bm  \psi_l \cdot \bm \psi_k \ d\bm {\tilde x} 

+
\rho_s
\int_{\OS} \tdet(\bm \nabla \tp)\bm  \psi_l \cdot \bm  \psi_k \ d\bm {\tilde x}

\Bigg]

\vspace{1mm} \\
+
\displaystyle
\sum_{l=1}^n

f_l^n(t)\displaystyle \int_{\OF} \bm {\breve \sigma}^0_f(\bm \psi_l) :\bm \nabla \bm \psi_k \ d\bm {\tilde x}

\vspace{1mm} \\

+
\displaystyle
\sum_{l=1}^n

h^n_l(t)
 \underbrace{\Bigg(
\displaystyle\sum \limits_{i,\al,j,\be=1}^3 \displaystyle \int_{\OS} 
\bigg[
\tbiajb \partial_\be \psi_{l,j} \ \partial_\al \psi_{k,i}
+ \partial_\al \tbiajb \partial_\be \psi_{l,j} \ \psi_{k,i} \ d\bm {\tilde x}
\bigg]
\Bigg)}_\mathlarger{[D_{i,\al,j,\be}]_{k}}

\vspace{1mm} \\

=
\displaystyle \int_{\Gamma_c(0)} \bm  g \cdot \bm \psi_k \ d\tilde{\Gamma},

\vspace{1mm} \\


\Large{\dfrac{d}{dt}h^n_l(t)=f^n_l(t)}  \qquad \forall \ 1 \leq l \leq n,
\vspace{1mm} \\

\displaystyle

\sum_{l=1}^n

[[\bm \psi_l,\bm \psi_k]] f^n_l(0)
=
[[ \bm {\tilde \gamma}_0, \bm \psi_k ]], 
\vspace{1mm} \\

\Large h^n_l(0)=0    \qquad \forall \ 1 \leq l \leq n.\\
\end{cases}
\end{equation}
System \eqref{ODE} can be rewritten in the following matrix form 
\small
{
\begin{equation}\label{Matrix-ODE}
\mathop{
\left[
\begin{array}{c | c}
\\
\left[[\bm \psi_l,\bm \psi_k]\right]_{l,k=1}^n 
& \displaystyle{\bm 0}_n
\\ \\
\hline
\\
\displaystyle{\bm 0}_n
& \displaystyle{\bm I}_n
 \\ \\
\end{array}
\right]
}
_{\textstyle \bm A}
\dfrac{d}{dt}
\mathop
{
\left[
\begin{array}{cc}
f_1^n(t) \\
\vdots \\
f^n_n(t)\\
\hline\\
h_1^n(t) \\
\vdots \\
h^n_n(t)\\
\end{array}
\right]^{\mathbf{}}
}_{\textstyle {\dfrac{d}{dt} \bm F}}
=
\mathop{
\left[
\begin{array}{c | c}
\\
\bm S_n
&[\bm D]_n
\\ \\
\hline
\\
\displaystyle{\bm I}_n 
& \displaystyle{\bm 0}_n 
\\ \\
\end{array}
\right]}
_{\textstyle \bm B}
\mathop
{
\left[
\begin{array}{cc}
f_1^n(t) \\
\vdots \\
f^n_n(t)\\
\hline \\
h_1^n(t)\\
\vdots \\
h^n_n(t)\\
\end{array}
\right]}
_{\textstyle \bm F}
+
\mathop
{
\left[
\begin{array}{cc}
\\
\left[
\displaystyle \int_{\Gamma_c(0)} \bm g \cdot \bm \psi_k \ d\tilde{\Gamma} \right]_{k=1}^n
\\ \\
\hline\\
\displaystyle{\bm 0}_{n \times 1}\\ \\
\end{array}
\right]}
_{\textstyle \bm C}
\end{equation}
}
with
\[
[[\bm \psi_l,\bm \psi_k]]_{l,k=1}^n
=
\displaystyle
\left[\rho_f \int_{\OF} \tdet(\bm \nabla \tca) \bm \psi_l \cdot \bm \psi_k \ d\bm {\tilde x}
+
\rho_s  \int_{\OS} \tdet(\bm \nabla \tp) \bm \psi_l \cdot \bm \psi_k \ 
d\bm {\tilde x}
\right]_{l,k=1}^n,
\]
\[
\displaystyle
\bm S_n=
\left[\int_{\OF} \bm {\breve \sigma}^0_f(\bm \psi_l) :\bm \nabla \bm \psi_k \ d\bm {\tilde x} \right]_{l,k=1}^n
\quad \textup{and} \quad
%
\displaystyle
\left[\bm D \right]_n=
[D_{i,\al,j,\be}]_{l,k=1}^n.
\]
The matrix $\bm A$ is a positive definite matrix as the function set $ \left\lbrace \bm  \psi_i \right\rbrace_{i=1}^n $ is linearly independent. Moreover, $\bm A$ is bounded on $\left(0,T \right)$. Further, matrices $\bm B$ and $\bm C$ are bounded on $(0,T)$.
%
%
%
Hence by theory for systems of linear first order ODEs, we get that system \eqref{ODE} admits a unique $\mathcal{C}^1$-solution $\left\{f_1^n ,\ldots,f_n^n,h_1^n,\ldots,h_n^n \right\} $ which yields the existence of a unique Galerkin approximation $\{\bm {\tilde \gamma}_n \}_n$ of \eqref{Galerkin-coupling-formulation}-\eqref{galerkin-initial-cdtn} such that $
\bm {\tilde \gamma}_n \in W^{1,\infty}(0,T;H^1(\Omega_0) ).
$
\\
Now we proceed to derive \textit{a priori} estimates on $\bm {\tilde \gamma}_n$.
%
\subsection{A Priori Estimates}
\label{A Priori Estimates}
%
%
%
\subsubsection*{Step 1: Estimates on $\bm {\tilde \gamma}_n$}
We aim to find some estimates on $\bm {\tilde \gamma}_n$. 
For this sake we set $\bm {\tilde \eta}_n =\bm {\tilde \gamma}_n$ in \eqref{Galerkin-coupling-formulation} to get
%
%
%
\begin{equation}
\begin{cases}

\rho_f \displaystyle \int_{\OF} 
\tdet(\bm \nabla \tca)\partial_t \bm {\tilde \gamma}_n \cdot \bm {\tilde \gamma}_n \ d\bm {\tilde x} +
\displaystyle \int_{\OF} \bm {\breve \sigma}^0_f(\bm {\tilde \gamma}_n) :\bm \nabla \bm {\tilde \gamma}_n \ d\bm {\tilde x}

+\rho_s \displaystyle \int_{\OS} \tdet(\bm \nabla \tp) \partial_t \bm {\tilde \gamma}_n \cdot \bm {\tilde \gamma}_n \ d\bm {\tilde x}

\\

+\displaystyle\sum\limits_{i,\al,j,\be=1}^3 \displaystyle \int_{\OS} \tbiajb\partial_\be (\mathsmaller \int_0^t  \tilde{\gamma}_n(s) ds)_j 
\ 
\partial^2_{\al t}
(\mathsmaller \int_0^t  \tilde{\gamma}_n(s) ds)_i \ d\bm {\tilde x} 

\\

+ \displaystyle \sum\limits_{i,\al,j,\be=1}^3 \displaystyle \int_{\OS} \partial_\al \tbiajb \partial_\be (\mathsmaller \int_0^t  \tilde{\gamma}_n(s) ds)_j \ \tilde{\gamma}_{n,i} \ d\bm {\tilde x}

=
\displaystyle \int_{\Gamma_c(0)} \bm g \cdot \bm {\tilde \gamma}_n \ d\tilde{\Gamma}. 
\end{cases}
\end{equation}
%
%
%
Then, integrating over $(0,t)$ and applying integration by parts yield
%
%
%
%
\begin{equation}\label{Galerkin-2}
\begin{cases}

\dfrac{\rho_f}{2} \displaystyle \int_{\OF}
\tdet(\bm \nabla \tca)(t)\lvert \bm {\tilde \gamma}_n(t) \lvert^2 \ d\bm {\tilde x} +


\dfrac{\rho_s}{2} \displaystyle \int_{\OS}
\tdet(\bm \nabla \tp)(t)
\lvert \bm {\tilde \gamma}_n(t) \lvert^2 \ d\bm {\tilde x} 


\vspace{1mm} \\

+\displaystyle \int_0^t
\displaystyle \int_{\OF}
\dfrac{\mu}{2} \tdet(\nc) \lvert \bm \nabla \bm  {\tilde \gamma}_n (\nc)^{-1}+ (\nc)^{-t} (\bm \nabla \bm {\tilde \gamma}_n)^t \lvert^2 \ d\bm {\tilde x} \ ds 


\vspace{1mm} \\

+\dfrac{1}{2}\displaystyle\sum\limits_{i,\al,j,\be=1}^3 \displaystyle \int_{\OS} \tbiajb(t) \ \partial_\be (\mathsmaller \int_0^t  \tilde{\gamma}_n(s) ds)_j \ \partial_\al (\mathsmaller  \int_0^t  \tilde{\gamma}_n(s) ds)_i \ d\bm {\tilde x}

\vspace{1mm} \\

-

\dfrac{\rho_s}{2}
\mathlarger \int_0^t
\displaystyle \int_{\OS}
\partial_s \tdet(\bm \nabla \tp)
\lvert \bm {\tilde \gamma}_n \lvert^2 \ d\bm {\tilde x} \ ds 


\vspace{1mm} \\

-\dfrac{1}{2} \displaystyle \sum\limits_{i,\al,j,\be=1}^3 \displaystyle \int_0^t
\displaystyle \int_{\OS}
\partial_s \tbiajb \ 
\partial_\be (\mathsmaller \int_0^s \tilde{\gamma}_n(\tau) d\tau)_j \ \partial_\al (\mathsmaller \int_0^s \tilde{\gamma}_n(\tau) d\tau)_i \ d\bm {\tilde x} \ ds

\vspace{1mm} \\

-\dfrac{\rho_f}{2}
\mathlarger \int_0^t
\mathlarger \int_{\OF}
\partial_s \tdet(\bm \nabla \tca) \lvert \bm {\tilde \gamma}_n \lvert^2 \ d\bm {\tilde x} \ ds

\vspace{1mm} \\


+ \displaystyle \sum\limits_{i,\al,j,\be=1}^3 \displaystyle \int_0^t
\displaystyle \int_{\OS}
\partial_\al \tbiajb \partial_\be (\mathsmaller \int_0^s \tilde{\gamma}_n(\tau) d\tau)_j  \partial_s (\mathsmaller \int_0^s \tilde{\gamma}_n(\tau) d\tau)_i \ d\bm {\tilde x} \ ds


\vspace{1mm} \\
=
\displaystyle \int_0^t
\displaystyle \int_{\Gamma_c(0)}
\bm g \cdot  \bm {\tilde \gamma}_n \ d\tilde{\Gamma} \ ds



+\dfrac{\rho_f}{2} \displaystyle \int_{\OF}
\lvert \bm {\tilde \gamma}_n(0) \lvert^2 \ d\bm {\tilde x} +


\dfrac{\rho_s}{2} \displaystyle \int_{\OS}
\lvert \bm {\tilde \gamma}_n(0) \lvert^2 \ d\bm {\tilde x}.

\end{cases}
\end{equation}
We start by deriving estimates on the terms of
\eqref{Galerkin-2}.
\\
First of all, as $ \tdet(\bm \nabla \tca)-1 \geqslant -CT^\kappa M$, then we have
\begin{equation}\label{fluid-est1}
\begin{split}
&\dfrac{\rho_f}{2}
\mathlarger \int_{\OF}
\tdet(\bm \nabla \tca) \lvert \bm {\tilde \gamma}_n(t) \lvert^2 \ d\bm {\tilde x}
-
\dfrac{\rho_f}{2}
\mathlarger \int_0^t 
\mathlarger \int_{\OF} 
\partial_s \tdet(\bm \nabla \tca) \lvert \bm {\tilde \gamma}_n \lvert^2 \ d\bm {\tilde x} \ ds
\\
&
\geq
\dfrac{\rho_f}{2}
(1-CT^\kappa M) \norm \bm {\tilde \gamma}_n \norm^2_{L^\infty(L^2)}
-
\dfrac{\rho_f}{2}
\norm \partial_t \tdet(\bm \nabla \tca) \norm_{L^2(H^2)} \norm \bm {\tilde \gamma}_n \norm^2_{L^\infty(L^2)}
\\
& 
\geq
\dfrac{\rho_f}{2}
(1-CT^\kappa M-CT^\kappa M)
\norm \bm {\tilde \gamma}_n \norm^2_{L^\infty(L^2(\OF))}.
\end{split}
\end{equation}
The fluid stress term is decomposed as follows%
\begin{align*}
\displaystyle \int_0^t
\int_{\OF}
\dfrac{\mu}{2} &\lvert \nga_n (\nc)^{-1}+ (\nc)^{-t} (\nga_n)^t \lvert^2 \ d\bm {\tilde x} \ ds =N_1+N_2.
\end{align*}
For $N_1$ we use Korn's inequality and Lemma \ref{Estimates on Flow}, then there exits $C_k >0$ such that
\begin{align*}
N_1
=
\displaystyle \int_0^t
\int_{\OF}
\dfrac{\mu}{2}
\Big\lvert \nga_n (\nc)^{-1}+ (\nc)^{-t} (\nga_n)^t \Big\lvert^2 \ d\bm {\tilde x} \ ds
%
& 
\geq
\displaystyle \int_0^t
\int_{\OF}
\mu \Big\lvert \bm \epsilon(\bm {\tilde \gamma}_n) \Big\lvert^2
-
\mu \Big \lvert \nga_n \Big((\nc)^{-1}-\Id \Big) \Big\lvert^2 \ d\bm {\tilde x} \ ds.
\\
& 
\geq
\mu (C_k-CT^{\kappa}M) \lvert \lvert \bm {\tilde \gamma}_n \lvert \lvert^2_{L^2(H^1(\OF))}.
\end{align*}
Similarly for $S_2$ we use Lemma \ref{Estimates on Flow} which yields
\begin{align*}
N_2 =
\displaystyle \int_0^t \int_{\OF}
\dfrac{\mu}{2}
(\tdet(\bm \nabla \tca)-1)
\Big\lvert \nga_n (\nc)^{-1}+ (\nc)^{-t} (\nga_n)^t \Big\lvert^2
\ d\bm {\tilde x} \ ds
%
%
%
&
\geq
-\norm \tdet(\nc)-1 \norm_{L^\infty(H^3)}
\norm \bm {\tilde \gamma}_n \norm^2_{L^2(H^1)}
\norm (\nc)^{-1} \norm^2_{L^\infty(H^3)}
\nonumber \\
& 
\geq 
\mu CT^\kappa M \norm \bm {\tilde \gamma}_n \norm^2_{L^2(H^1(\OF))}.
\end{align*}
Therefore,
\begin{align}\label{stress-fluid-estimate}
N_1 + N_2 \geq
\mu (C_k-CT^{\kappa}M) \lvert \lvert \bm {\tilde \gamma}_n \lvert \lvert^2_{L^2(H^1(\OF))}.
\end{align}
As for the integrals on the domain $\OS$, first of all we have
\begin{align}
\mathlarger \int_{\OS}
\tdet(\bm \nabla \tp) \lvert \bm  \gamma_n(t) \lvert^2 \ d\bm {\tilde x}
\geq
(1-CT^\kappa M) \norm \bm {\tilde \gamma}_n \norm^2_{L^\infty(L^2(\OS))}.
\end{align}
Thanks to \eqref{det-struct} it holds
\begin{equation}
\begin{split}
\mathlarger \int_0^t
\mathlarger \int_{\OS}
\Big \lvert
\partial_s \tdet(\bm \nabla \tp) \bm {\tilde \gamma}^2_n(t) 
\Big \lvert \ d\bm {\tilde x} \ ds
&\leq
T
\norm \partial_t \tdet(\bm \nabla \tp) \norm_{L^\infty(H^2(\OS))}
\norm \bm {\tilde \gamma}_n \norm^2_{L^\infty(L^2(\OS))}
\leq
CT^\kappa M \norm \bm {\tilde \gamma}_n \norm^2_{L^\infty(L^2(\OS))}
.
\end{split}
\end{equation}
Using \eqref{coercivity} and \eqref{coercivity1} together with Korn's Inequality give
\begin{equation}\label{structure-estimate1}
\begin{split}
\dfrac{1}{2}
\displaystyle\sum\limits_{i,\al,j,\be=1}^3 \displaystyle \int_{\OS} \tbiajb(t) \partial_\be (\mathsmaller \int_0^t \tilde{\gamma}_n(s) ds)_j \ \partial_\al (\mathsmaller \int_0^t \tilde{\gamma}_n(s) ds)_i  \ d\bm {\tilde x}
%
\geq &
\mu_s C_k \lvert \lvert \mathsmaller \int_0^t \bm {\tilde \gamma}_n(s) ds \lvert \lvert^2_{H^1(\OS)}
+\dfrac{\mathsf{C}+\lambda_s}{2} \lvert \lvert \nabla \cdot (\mathsmaller \int_0^t \bm {\tilde \gamma}_n(s) ds) \lvert \lvert^2_{L^2(\OS)}
\\
&- CT^{\kappa} M \norm \mathsmaller \int_0^t \bm {\tilde \gamma}_n(s) ds \norm^2_{H^1(\OS)}.
\end{split}
\end{equation}
On the other hand, using \eqref{lq-boundedness} and \eqref{lq1-boundedness} in addition to Young's inequality \cite[Proposition II.2.16]{Boyer} and the Sobolev embeddings yield
\begin{equation}\label{structure-estimate2}
\begin{split}
\Bigg \lvert
&
-\dfrac{1}{2} 
\mathlarger 
\sum\limits_{i,\al,j,\be=1}^3 \displaystyle \int_0^t
\displaystyle \int_{\OS}
\partial_s \tbiajb \partial_\be (\mathsmaller \int_0^s \tilde{\gamma}_n(\tau) d\tau)_j \ \partial_\al (\mathsmaller \int_0^s \tilde{\gamma}_n(\tau) d\tau)_i \ d\bm {\tilde x} \ ds
\\
&
+\dfrac{1}{2} \displaystyle \sum\limits_{i,\al,j,\be=1}^3 \displaystyle \int_0^t
\displaystyle \int_{\OS}
\partial_\al \tbiajb \partial_\be (\mathsmaller \int_0^s \tilde{\gamma}_n(\tau) d\tau)_j \ \partial_s (\mathsmaller \int_0^s \tilde{\gamma}_n(\tau) d\tau)_i \ d\bm {\tilde x} \ ds
\Bigg \lvert
\\
 & \leq
CT^\kappa M
\left[
\lvert \lvert \mathsmaller \int_0^{\bigcdot} \bm {\tilde \gamma}_n(s) ds \lvert \lvert^2_{L^\infty(H^1(\OS))}
+
\lvert \lvert \bm {\tilde \gamma}_n \lvert \lvert^2_{L^\infty(L^2(\OS))}
\right]. 
\end{split}
\end{equation}
Finally, applying integration by parts then using the trace inequality \cite[Theorem III.2.19.]{Boyer} and Young's inequality we get
\begin{align}\label{g-estimate}
\Bigg \lvert \displaystyle \int_0^t
\displaystyle \int_{\Gamma_c(0)} \bm g \cdot \bm {\tilde \gamma}_n \ d\tilde{\Gamma} \ ds \Bigg \lvert
\leq
C
\delta T \norm \mathsmaller \int_0^{\bigcdot} \bm {\tilde \gamma}_n(s) ds \norm^2_{L^\infty(H^1(\OS))}
+
C_\delta \norm \bm g \norm^2_{H^1(L^2(\Gamma_c(0)))}.
\end{align}
Where we have used H\"older's \cite[Proposition II.2.18]{Boyer} inequality with the fact that $\bm g(.,0)=0$ which gives
\begin{align*}
\norm \bm g(.) \norm^2_{L^\infty([0,T])}
\leq
T \norm \bm g(.) \norm^2_{H^1([0,T])} \quad \textup{on} \ \Gamma_c(0)
\end{align*}

In order to deal with $\mathlarger \int_{\Omega_0} \lvert \bm {\tilde \gamma}_n(0) \lvert^2 d\bm {\tilde x}$, we use Lemma\cite[Lemma 2.2]{Du} which yields
\begin{align}\label{projection-initial}
\norm \bm {\tilde \gamma}_n(0) \norm^2_{L^2(\Omega(0))}
=
\norm \pi_n \bm {\tilde \gamma}_0 \norm^2_{L^2(\Omega_0)}
\leq \norm \bm {\tilde \gamma}_0 \norm^2_{L^2(\Omega_0)}
=
\norm \bm v_0 \norm^2_{L^2(\OF)}
+
\norm \bm \xi_1 \norm^2_{L^2(\OS)}.
\end{align}
%
%
%
%
Combining \eqref{fluid-est1}-\eqref{g-estimate} and using \eqref{projection-initial} we obtain
\begin{equation}
\begin{split}
&\Big(
\mu C_k -\mu CT^{\kappa} M 
\Big) 
\norm \bm {\tilde \gamma}_n \norm^2_{L^2(H^1(\OF))}
+\dfrac{\rho_f}{2}(1-CT^\kappa M) 
\norm \bm {\tilde \gamma}_n \norm^2_{L^\infty(L^2(\OF))}
+\Big(
\dfrac{\rho_s}{2}\big(1-CT^\kappa M \big)-CT^\kappa M
\Big)
\norm \bm {\tilde \gamma}_n \norm^2_{L^\infty(L^2(\OS))}
\\
&+
\Big(
\mu_s C_k -CT^{\kappa}M -C\delta T) \Big) \norm \mathsmaller \int_0^{\bigcdot} \bm {\tilde \gamma}_n(s) ds \norm^2_{L^\infty(H^1(\OS))}
\leq
C
\Big[
\dfrac{\rho_f}{2} \norm \bm v_0 \norm^2_{L^2(\OF)}+
\dfrac{\rho_s}{2} \norm \bm \xi_1 \norm^2_{L^2(\OS)}
\Big]
+
C_\delta \lvert \lvert \bm g \lvert \lvert^2_{H^1(L^2(\Gamma_c(0)))}.
\end{split}
\end{equation}
Remark that, the constants $\mu_s,\lambda_s$ and $\mu$ are given as large values by the constitutive laws of the structure and the fluid. 
Moreover, $\delta$ is a negligible positive real number, hence norms that are factored by the term $\delta$ are being absorbed by larger terms. Finally, we take $T$ small with respect to $M$ and the initial values, that is, the factor  $CT^\kappa M$ is negligible. These assumptions lead to the following estimate
\begin{equation}\label{Estimate-Auxiliary-1}
\begin{split}
& \norm \bm {\tilde \gamma}_n \norm^2_{L^2(H^1(\OF))}
+
\norm \bm {\tilde \gamma}_n \norm^2_{L^\infty(L^2(\Omega_0))}
+
\norm \mathsmaller \int_0^{\bigcdot} \bm {\tilde \gamma}_n(s) ds \norm^2_{L^\infty(H^1(\OS))}
\leq
C
\Big[
\dfrac{\rho_f}{2} \norm \bm v_0 \norm^2_{L^2}+
\dfrac{\rho_s}{2} \norm \bm \xi_1 \norm^2_{L^2}+
\lvert \lvert \bm g \lvert \lvert^2_{H^1(L^2(\Gamma_c(0)))}
\Big].
\end{split}
\end{equation}
\subsubsection*{Step 2: Estimates on $\partial_t \bm {\tilde \gamma}_n$}
The next step is to derive some estimates on $\partial_t \bm {\tilde \gamma}_n$.
Consider a function $\bm {\tilde \eta}$ in $\widetilde{\mathcal{W}}$ such that $\norm \bm {\tilde \eta} \norm_{L^2(H^1(\Omega_0))} \leq 1$. The function $\bm {\tilde \eta}$ can be written as 
\begin{align*}
\bm {\tilde \eta} = \pi_n \bm {\tilde \eta} + (\bm {\tilde \eta}-\pi_n \bm {\tilde \eta}).
\end{align*} 
 where $\pi_n$ is the projection from $L^2(\Omega_0)$ into $\widetilde{\mathcal{W}}_n$.
 Notice that, as we have $\partial_t \bm {\tilde \gamma}_n \in \widetilde{\mathcal{W}}_n$ then
 \begin{align*}
 [[\partial_t \bm {\tilde \gamma}_n(t), \bm {\tilde \eta} ]]
 =
 [[\partial_t \bm {\tilde \gamma}_n (t), \pi_n \bm {\tilde \eta}]]
 +
 [[\partial_t \bm {\tilde \gamma}_n (t), \bm {\tilde \eta}- \pi_n \bm {\tilde \eta}]]
 =
 [[\partial_t \bm {\tilde \gamma}_n (t), \pi_n \bm {\tilde \eta}]].
 \end{align*}
Set $\bm {\tilde \eta}_n = \pi_n \bm {\tilde \eta}$ in \eqref{Galerkin-coupling-formulation}. By integrating over $(0,t)$ we obtain
\begin{equation}\label{Galerkin3}
\begin{cases}
\rho_f \displaystyle \int_0^t
\displaystyle \int_{\OF} \tdet(\bm \nabla \tca)  \partial_s \bm {\tilde \gamma}_n \cdot \pi_n \bm {\tilde \eta} \ d\bm {\tilde x} \ ds 
+ \rho_s \displaystyle \int_0^t  
\displaystyle \int_{\OS}
\tdet(\bm \nabla \tp)
\partial_s \bm {\tilde \gamma}_n \cdot \pi_n \bm {\tilde \eta} \ d\bm {\tilde x} \ ds 
+\displaystyle \int_0^t 
\displaystyle \int_{\OF} 
\bm {\breve \sigma}^0_f(\bm {\tilde \gamma}_n) 
: 
\bm \nabla \pi_n \bm {\tilde \eta} \ d\bm {\tilde x} \ ds 
\vspace{2mm} \\
+\displaystyle\sum\limits_{i,\al,j,\be=1}^3
\displaystyle \int_0^t \displaystyle \int_{\OS} \tbiajb \partial_\be (\mathsmaller  \int_0^s  \tilde{\gamma}_n(\tau) d\tau)_j \ \partial_{\al} (\pi_n \tilde{\eta})_i \ d\bm {\tilde x} \ ds
\vspace{2mm} \\
+ \displaystyle \sum\limits_{i,\al,j,\be=1}^3
\displaystyle \int_0^t \displaystyle \int_{\OS} \partial_\al \tbiajb \partial_\be ( \mathsmaller \int_0^s  \tilde{\gamma}_n(\tau) d\tau)_j (\pi_n \tilde{\eta})_i \ d\bm {\tilde x} \ ds
 =
\displaystyle \int_0^t \displaystyle \int_{\Gamma_c(0)} 
\bm g \cdot \pi_n \bm {\tilde \eta} \ d\tilde{\Gamma} \ ds.


\end{cases}
\end{equation}
This is equivalent to say,
\begin{align*}
(1-CT^\kappa M) 
\displaystyle \int_0^t[[\partial_s \bm {\tilde \gamma}_n(s), \pi_n \bm {\tilde \eta}(s)]] \ ds
=
&-\displaystyle \int_0^t \displaystyle \int_{\OF} \bm {\breve \sigma}^0_f(\bm {\tilde \gamma}_n) : \bm \nabla \pi_n \bm {\tilde \eta} \ d\bm {\tilde x} \ ds 
-\displaystyle\sum\limits_{i,\al,j,\be=1}^3
\displaystyle \int_0^t \displaystyle \int_{\OS} \tbiajb \partial_\be ( \mathsmaller \int_0^s  \tilde{\gamma}_n(\tau) d\tau)_j \ \partial_{\al} (\pi_n \tilde{\eta})_i \ d\bm {\tilde x} \ ds
\vspace{1mm} \\
&- \displaystyle \sum\limits_{i,\al,j,\be=1}^3
\displaystyle \int_0^t \displaystyle \int_{\OS} \partial_\al \tbiajb \partial_\be (\mathsmaller \int_0^s  \tilde{\gamma}_n(\tau) d\tau)_j (\pi_n \tilde{\eta})_i \ d\bm {\tilde x} \ ds
+
\displaystyle \int_0^t \displaystyle \int_{\Gamma_c(0)} \bm g \cdot \pi_n \bm {\tilde \eta} \ d\tilde{\Gamma} \ ds.
\end{align*}
Bounding the terms of the right hand side of the above equality yields
\begin{equation}\label{EQN}
\begin{split}
(1-CT^\kappa M)
\displaystyle
\int_0^t
[[ \partial_s \bm {\tilde \gamma}_n(s), \pi_n \bm  {\tilde \eta}(s)]] \ ds
\leq&
\displaystyle \int_0^t 
\norm \bm {\breve \sigma}^0_f(\bm {\tilde \gamma}_n) \norm_{L^2(\OF)}
\norm \bm \nabla \pi_n \bm {\tilde \eta} \norm_{L^2(\OF)} \ ds
 \vspace{2mm} \\
&+
\mathlarger  \sum\limits_{i,\al,j,\be=1}^3
\displaystyle \int_0^t 
\norm \tbiajb \norm_{L^\infty(\OS)} 
\norm \partial_\be (\mathsmaller \int_0^s \tilde{\gamma}_n(\tau) d\tau)_j \norm_{L^2(\OS)}
\norm \partial_{\al} (\pi_n \tilde{\eta})_i \norm_{L^2(\OS)} \ ds
\vspace{2mm} \\ 
&+
 \displaystyle \sum\limits_{i,\al,j,\be=1}^3
\displaystyle \int_0^t
\norm \partial_\al \tbiajb \norm_{L^\infty(\OS)}
\norm \partial_\be ( \mathsmaller \int_0^s  \tilde{\gamma}_n(\tau) d\tau)_j
\norm_{L^2(\OS)}
\norm (\pi_n \tilde{\eta})_i \norm_{L^2(\OS)} \ ds
\vspace{2mm} \\
&+
\norm \bm g \norm_{H^1(L^2(\Gamma_c(0)))} 
\norm \pi_n \bm {\tilde \eta} \norm_{L^2(H^1(\OS))}.
\end{split}
\end{equation}
Using H\"older's inequality with the Sobolev embeddings ($H^2 \subset L^\infty$), the right hand side of \eqref{EQN} is bounded above by
\begin{equation}
\begin{split}
\Big[
C
\norm  \bm{\tilde \gamma}_n \norm_{L^2(H^1(\OF))}
+
CT^\kappa M
\norm \mathsmaller \int_0^{\bigcdot} \bm {\tilde \gamma}_n(s) ds \norm_{L^\infty(H^1(\OS))}
+
\norm \bm g \norm_{H^1(L^2(\Gamma_c(0)))} 
\Big]
\norm \pi_n \bm {\tilde \eta} \norm_{L^2(H^1(\Omega_0))}.
\end{split}
\end{equation}
Then using the previous estimate \eqref{Estimate-Auxiliary-1} we get
\begin{align*}
\displaystyle
\int_0^t
[[\partial_s \bm {\tilde \gamma}_n(s),\pi_n \bm {\tilde \eta}(s)]] \ ds
\leq 
C
\Big[
\dfrac{\rho_f}{2}
\norm \bm v_0 \norm_{L^2}
+
\dfrac{\rho_s}{2}
\norm \bm \xi_1 \norm_{L^2}
+
C \norm \bm g \norm_{H^1(L^2(\Gamma_c(0)))}
\Big]
\norm \pi_n \bm {\tilde \eta} \norm_{L^2(H^1(\Omega_0))}.
\end{align*}
Using the fact that $\norm \pi_n \bm {\tilde \eta} \norm_{L^2(H^1)} \leq
\norm \bm {\tilde \eta} \norm_{L^2(H^1(\Omega_0))} \leq 1$ we get 
\begin{align}\label{Estimate-Partial_t-Aux}
\norm \partial_t \bm {\tilde \gamma}_n \norm^2_{L^2(L^2)}
\leq 
C
\Big[
\dfrac{\rho_f}{2}
\norm \bm v_0 \norm^2_{L^2(\OF)}
+
\dfrac{\rho_s}{2}
\norm \bm \xi_1 \norm^2_{L^2(\OS)}
+
\norm \bm g \norm^2_{H^1(L^2(\Gamma_c(0)))}
\Big].
\end{align}
%
%
%
%
%
From the estimates \eqref{Estimate-Auxiliary-1} and \eqref{Estimate-Partial_t-Aux} we may extract a subsequence of $\{ \bm {\tilde \gamma}_n \}_n$ which we also denote by $\{ \bm {\tilde \gamma}_n \}_n$ such that
\[
\bm {\tilde \gamma}_n \overset{*}{\rightharpoonup} \bm {\tilde \gamma}  \ \textrm{in} \ L^\infty(0,T;L^2(\Omega_0)),
\qquad
\bm {\tilde \gamma}_n \overset{}{\rightharpoonup} \bm {\tilde \gamma}  \ \textrm{in} \ L^2(0,T;L^2(\Omega_0)),
\]
\[
\partial_t \bm {\tilde \gamma}_n \overset{}{\rightharpoonup} \partial_t \bm {\tilde \gamma}  \ \textrm{in} \ L^2(0,T;L^2(\Omega_0)),
\]
\[\qquad
\bm {\tilde \gamma}_n|_{\OF} \overset{*}{\rightharpoonup} \bm {\tilde \gamma}|_{\OF}  \ \textrm{in} \ L^\infty(0,T;H^1(\OF)),
\qquad
\bm {\tilde \gamma}_n|_{\OF} \overset{}{\rightharpoonup} \bm {\tilde \gamma}|_{\OF}  \ \textrm{in} \ L^2(0,T;H^1(\OF)),
\]
and
\[
\displaystyle \int_0^t \bm {\tilde \gamma}_n(s)\Big|_{\OS} ds  \overset{*}{\rightharpoonup} \int_0^t \bm {\tilde \gamma}(s)\Big|_{\OS} ds  \ \textrm{in} \ L^\infty(0,T;H^1(\OS)).
\]
By Aubin-Lion-Simon Theorem 
\cite[Theorem II.5.16]{Boyer} we get
\[
 \bm {\tilde \gamma}_n 
\rightarrow 
\bm {\tilde \gamma}
 \in \mathcal{C}^0 \big([0,T];L^2(\Omega_0)\big) .
 \]





\subsubsection*{Existence of the Weak Solution}

Now passing to the limit as $n \rightarrow \infty$ in \eqref{Estimate-Auxiliary-1} and \eqref{Estimate-Partial_t-Aux} gives us the estimates on $\bm {\tilde \gamma}$.
To show that $\bm {\tilde \gamma}$ satisfies \eqref{final-eqn-weak-1} we proceed as follows. We fix an integer $N$ and choose a function
$\bm {\tilde \eta} \in \mathcal{C}^1([0,T],\widetilde{\mathcal{W}})$ of the form

\begin{align}\label{form-of-function}
\bm {\tilde \eta} = \sum_{l=1}^N d_l(t) \bm \psi_l(\bm {\tilde x}).
\end{align}
For $n>N$, we integrate \eqref{Galerkin-coupling-formulation} with respect to $t$ to get
\begin{equation}\label{*}
\begin{cases}
\rho_f
\displaystyle \int_0^T
\displaystyle \int_{\OF} 
\tdet(\bm \nabla \tca) \ \partial_t \bm {\tilde \gamma}_n \cdot \bm {\tilde \eta} \ d\bm {\tilde x} \ dt
+
\displaystyle \int_0^T
\displaystyle \int_{\OF} \bm {\breve \sigma}^0_f(\bm {\tilde \gamma}_n) : \bm \nabla \bm {\tilde \eta} \ d\bm {\tilde x} \ dt
+ 
\rho_s
\displaystyle \int_0^T
\displaystyle \int_{\OS}
\tdet(\bm \nabla \tp)
\partial_t \bm {\tilde \gamma}_n \cdot \bm {\tilde \eta} \ d\bm {\tilde x} \ dt 
\vspace{1mm} \\
+
\displaystyle\sum\limits_{i,\al,j,\be=1}^3
\displaystyle \int_0^T
\displaystyle \int_{\OS} \tbiajb \partial_\be ( \mathsmaller \int_0^t \gamma_n(s) ds)_j \ \partial_\al \tilde {\eta}_i \ d\bm {\tilde x} \ dt
\vspace{1mm} \\
+
\displaystyle \sum\limits_{i,\al,j,\be=1}^3
\displaystyle \int_0^T
\displaystyle \int_{\OS} \partial_\al \tbiajb \partial_\be (\mathsmaller \int_0^t  \tilde{\gamma}_n(s) ds)_j \ \tilde{\eta}_i \ d\bm {\tilde x} \ dt 
=
\displaystyle \int_0^T
\displaystyle \int_{\Gamma_c(0)} \bm g \cdot \bm {\tilde \eta} \ d\tilde{\Gamma} \ dt.

\end{cases}
\end{equation}
By passing to the limit as $n$ goes to infinity we get
\begin{equation}\label{**}
\begin{cases}
\rho_f
\displaystyle \int_0^T
\displaystyle \int_{\OF} 
\tdet(\bm \nabla \tca)\partial_t \bm {\tilde \gamma} \cdot \bm {\tilde \eta} \ d\bm {\tilde x} \ dt
+
\displaystyle \int_0^T
\displaystyle \int_{\OF} \bm {\breve \sigma}^0_f(\bm {\tilde \gamma}) : \bm \nabla \bm {\tilde \eta} \ 
d\bm {\tilde x} \ dt
+ 
\rho_s
\displaystyle \int_0^T
\displaystyle \int_{\OS} \tdet(\bm \nabla \tp) \partial_t \bm {\tilde \gamma} \cdot \bm {\tilde \eta} \ d\bm {\tilde x} \ dt 
\vspace{1mm} \\
+
\displaystyle\sum\limits_{i,\al,j,\be=1}^3
\displaystyle \int_0^T
\displaystyle \int_{\OS} \tbiajb \partial_\be 
(\mathsmaller \int_0^t  \tilde{\gamma}(s) ds)_j \ \partial_\al \tilde{\eta}_i \ d\bm {\tilde x} \ dt
\vspace*{1mm} \\
+
\displaystyle \sum\limits_{i,\al,j,\be=1}^3
\displaystyle \int_0^T
\displaystyle \int_{\OS} \partial_\al \tbiajb \partial_\be (\mathsmaller \int_0^t  \tilde{\gamma}(s) ds)_j \ \tilde{\eta}_i \ d\bm {\tilde x} \ dt 
=
\displaystyle \int_0^T
\displaystyle \int_{\Gamma_c(0)} \bm g \cdot \bm {\tilde \eta} \ d\tilde{\Gamma} \ dt,

\end{cases}
\end{equation}
holds true for all $\bm {\tilde \eta} \in L^2([0,T],\widetilde{\mathcal{W}})$ due to the fact that the space spanned by the functions of the form \eqref{form-of-function} is dense in $L^2([0,T],\widetilde{\mathcal{W}})$. Hence, \eqref{**} implies \eqref{final-eqn-weak-1}.
\\
To show that the initial conditions are satisfied we will consider $\bm {\tilde \eta} \in \mathcal{C}^1([0,t],\widetilde{\mathcal{W}})$ in \eqref{**} and integrate by parts to get
\begin{equation}\label{***}
\begin{cases}
\rho_f
\displaystyle \int_0^T
\displaystyle \int_{\OF} \bm {\tilde \gamma} \cdot \partial_t  (\bm {\tilde \eta} \ \tdet(\bm \nabla \tca)) \ d\bm {\tilde x} \ dt
-
\displaystyle \int_0^T
\displaystyle \int_{\OF} \bm {\breve \sigma}^0_f(\bm {\tilde \gamma}) :\bm \nabla \bm {\tilde \eta} \ 
d\bm {\tilde x}  \ dt
%
+
\rho_s
\displaystyle \int_0^T
\displaystyle \int_{\OS} \bm {\tilde \gamma} \cdot \partial_t (\bm {\tilde \eta} \ \tdet(\bm \nabla \tp)) \ d\bm {\tilde x} \ dt

\vspace{1mm} \\
-
\displaystyle\sum\limits_{i,\al,j,\be=1}^3
\displaystyle \int_0^T
 \displaystyle \int_{\OS} \tbiajb \partial_\be
(\mathsmaller \int_0^t  \tilde{\gamma}(s) ds)_j
\ \partial_\al \tilde{\eta}_i \ d\bm {\tilde x} \ dt


-
\displaystyle \sum\limits_{i,\al,j,\be=1}^3
\displaystyle \int_0^T
 \displaystyle \int_{\OS} \partial_\al \tbiajb \partial_\be
(\mathsmaller \int_0^t  \tilde{\gamma}(s) ds)_j
 \ \tilde{\eta}_i \ d\bm {\tilde x} \ dt

\vspace*{1mm} \\

=
-
\displaystyle \int_0^T
\displaystyle \int_{\Gamma_c(0)}
\bm  g \cdot \bm {\tilde \eta} \ d\tilde{\Gamma} \ dt
-
\rho_f \displaystyle \int_{\OF} \bm \vv(0) \cdot \bm {\tilde \eta}(0) \ d\bm {\tilde x}
-
\rho_s \displaystyle \int_{\OS} \partial_t \bm  \xxi(0) \cdot \bm {\tilde \eta}(0) \ d\bm {\tilde x}.


\end{cases}
\end{equation}
%
%
On the other hand, integrating by parts in time Equation \eqref{*} and passing to the limit we get
\begin{equation}\label{****}
\begin{cases}

\rho_f
\displaystyle \int_0^T
\displaystyle \int_{\OF}  \bm {\tilde \gamma} \cdot \partial_t (\bm {\tilde \eta} \ \tdet(\bm \nabla \tca)) d\bm {\tilde x} \ dt


-\displaystyle \int_0^T
\displaystyle \int_{\OF} \bm {\breve \sigma}^0_f(\bm {\tilde \gamma}) : \bm \nabla \bm {\tilde \eta} \ 
d\bm {\tilde x} \ dt

+ \rho_s
\displaystyle \int_0^T
\displaystyle \int_{\OS} \bm {\tilde \gamma} \cdot \partial_t (\bm {\tilde \eta} \ \tdet(\bm \nabla \tp)) \ d\bm {\tilde x} \ dt 



\vspace{1mm} \\
-
\displaystyle\sum\limits_{i,\al,j,\be=1}^3
\displaystyle \int_0^T
 \displaystyle \int_{\OS} \tbiajb \partial_\be (\mathsmaller \int_0^t  \tilde{\gamma}(s) ds)_j
  \ \partial_\al \eta_i \ d\bm {\tilde x} \ dt


-
\displaystyle \sum\limits_{i,\al,j,\be=1}^3
\displaystyle \int_0^T
 \displaystyle \int_{\OS} \partial_\al \tbiajb \partial_\be (\mathsmaller \int_0^t  \tilde{\gamma}(s) ds)_j \ \tilde{\eta}_i \ d\bm {\tilde x} \ dt

\vspace*{1mm} \\ =
-
\displaystyle \int_0^T
\displaystyle \int_{\Gamma_c(0)} \bm  g \cdot \bm {\tilde \eta} \ d\tilde{\Gamma} \ dt
-
\rho_f \displaystyle \int_{\OF} \bm v_0 \cdot \bm {\tilde \eta}(0) \ d\bm {\tilde x}
-
\rho_s \displaystyle \int_{\OS} \bm  \xi_1 \cdot \bm {\tilde \eta}(0) \ d\bm {\tilde x}.


\end{cases}
\end{equation}
Comparing \eqref{***} and \eqref{****} yields 
\[
[[\bm {\tilde \gamma}_0,\bm {\tilde \eta}(0)]] = [[\bm {\tilde \gamma}(0),\bm {\tilde \eta}(0)]].
\]
Since $\bm {\tilde \eta}(0) \in \widetilde{\mathcal{W}}$ is arbitrary, then the initial conditions are verified. 
\\
Finally, by passing to the limit in 
\eqref{galerkin-boundary}, we obtain
\eqref{final-eqn-weak-1}$_3$.
This yields the existence of the weak solution $\bm {\tilde \gamma}$ of System \eqref{Auxiliary-pro}.
\subsubsection*{Uniqueness of the Weak Solution}
To prove the uniqueness we assume that $\bm {\tilde \gamma}_1$ and
$\bm {\tilde \gamma}_2$ are two solutions of \eqref{final-eqn-weak-1} associated to $(\bm {\breve v},\bm {\breve \xi})$. Setting $\bm {\tilde \varsigma} =\bm  {\tilde \gamma}_1 - \bm {\tilde \gamma}_2$.
Then for all $\bm {\tilde \eta} \in \mathcal{C}^0(0,T;\widetilde{\mathcal{W}})$, the solution $\bm {\tilde \varsigma}$ satisfies the following variational formulation
\begin{equation}
\begin{cases}

\rho_f \displaystyle \int_{\OF} 
\tdet(\bm \nabla \tca)\partial_t \bm {\tilde \varsigma} \cdot \bm {\tilde \eta} \ d\bm {\tilde x} +
\displaystyle \int_{\OF} \bm {\tilde \sigma}(\bm {\tilde \varsigma}) :\bm \nabla \bm {\tilde \eta} \ 
d\bm {\tilde x}

+\rho_s \displaystyle \int_{\OS} \tdet(\bm \nabla \tp) \partial_t \bm {\tilde \varsigma} \cdot \bm {\tilde \eta} \ d\bm {\tilde x}

\vspace*{1mm} \\
+\displaystyle \sum_{i,\al,j,\be=1}^3 \displaystyle \int_{\OS} \tbiajb 
\partial_\be 
(\mathsmaller \int_0^t {\tilde \varsigma}(s) ds)_j \ \partial_\al \tilde {\eta}_i \ d\bm {\tilde x}

+\displaystyle \sum_{i,\al,j,\be=1}^3 \displaystyle \int_{\OS} \partial_\al \tbiajb \partial_\be (\mathsmaller \int_0^t {\tilde \varsigma}(s) ds)_j \ \tilde {\eta}_i \ d\bm {\tilde x}

=0
\end{cases}
\end{equation}
Taking $\bm {\tilde \eta} = \bm {\tilde \varsigma}$ and integrating over $(0,t)$ we get
\begin{equation}\label{system of uniqueness}
\begin{cases}

\dfrac{\rho_f}{2} \displaystyle \int_{\OF} 
\tdet(\bm \nabla \tca) \lvert \bm {\tilde \varsigma}(t) \lvert^2 \ d\bm {\tilde x}

+ \displaystyle \int_0^t
\displaystyle \int_{\OF} \bm {\tilde \sigma}(\bm {\tilde \varsigma}) :\bm \nabla \bm{\tilde \varsigma} \ d\bm {\tilde x} \ ds

+\dfrac{\rho_s}{2}
\displaystyle \int_{\OS} \tdet(\bm \nabla \tp) \lvert \bm {\tilde \varsigma}(t) \lvert^2 \ d\bm {\tilde x}

\vspace*{1mm} \\

-\dfrac{\rho_f}{2}
\mathlarger \int_0^t
\mathlarger \int_{\OF}
\lvert \bm {\tilde \varsigma} \lvert^2 \partial_s \tdet(\bm \nabla \tca) \ d\bm {\tilde x} \ ds

-\dfrac{\rho_s}{2}
\mathlarger \int_0^t
\mathlarger \int_{\OS}
\lvert \bm {\tilde \varsigma} \lvert^2 \partial_s \tdet(\bm \nabla \tp) \ d\bm {\tilde x} \ ds

\vspace*{1mm} \\

+
\dfrac{1}{2}
\displaystyle \sum_{i,\al,j,\be=1}^3 \displaystyle \int_{\OS}
\tbiajb 
(t) \partial_\be (\mathsmaller \int_0^t {\tilde \varsigma}(s) ds)_j \ \partial_\al {\tilde \varsigma}_i(t)
\ d\bm {\tilde x}



-
\dfrac{1}{2}
\displaystyle \sum_{i,\al,j,\be=1}^3
\displaystyle \int_0^t
\displaystyle \int_{\OS}
\partial_s
\tbiajb 
\partial_\be (\mathsmaller \int_0^s {\tilde \varsigma}(\tau) d\tau)_j 
\ \partial_\al {\tilde \varsigma}_i
\ d\bm {\tilde x} \ ds

\vspace*{1mm} \\
 
+
\displaystyle \sum_{i,\al,j,\be=1}^3
\displaystyle \int_0^t
\displaystyle \int_{\OS}
\partial_\al \tbiajb
\partial_\be (\mathsmaller \int_0^s {\tilde \varsigma}(\tau) d\tau)_j \ \partial_s {\tilde \varsigma}_i \ d\bm {\tilde x} \ ds

=
0.

\end{cases}
\end{equation}
%
%
%
%
%
%
%
%
%
%
Using \eqref{stress-fluid-estimate}-\eqref{structure-estimate2} we get
\begin{align}\label{estimate for uniqueness}
& \norm \bm {\tilde \varsigma} \norm^2_{L^2(H^1(\OF))}
+
\norm \bm  {\tilde \varsigma} \norm^2_{L^\infty(L^2(\Omega_0))}
+
\norm \mathsmaller \int_0^{\bigcdot} \bm {\tilde \varsigma}(s) ds \norm^2_{L^\infty(L^2(\OS))}
+
\norm \mathsmaller \int_0^{\bigcdot} \bm {\tilde \varsigma}(s) ds \norm^2_{L^\infty(H^1(\OS))}
\leq
0, 
\end{align}
which yields that $\bm {\tilde \gamma}_1=\bm {\tilde \gamma}_2$. Therefore, $\bm {\tilde \gamma}$ is a unique solution of \eqref{final-eqn-weak-1}.
In addition, we have 
\begin{align}
\bm {\tilde \gamma}|_{\OF} \in L^\infty(L^2(\OF)) \cap L^2(H^1(\OF)), \;
\bm {\tilde \gamma}|_{\OS} \in L^\infty(L^2(\OS)) 
\; \textrm{and} \;
\int_0^t \bm {\tilde \gamma}(s)|_{\OS} ds \in L^\infty(H^1(\OS)).
\end{align}
Consequently, setting $\bm \vv= \bm {\tilde \gamma}|_{\OF}$ and $\bm \xxi=\bm \xi_0 + \displaystyle \int_0^t \bm {\tilde \gamma}(s)|_{\OS} \ ds$, we obtain the existence and uniqueness of the weak solution $(\bm \vv,\bm \xxi) \in F^T_1 \times S^T_1$ for the System \eqref{Auxiliary-pro}.


\section{Existence of Solution for the Linearized System }
\label{Existence of linear system}
The linear problem is given by the following system
\begin{equation}\label{Linear-System}
\begin{cases}
    \rho_f \tdet(\bm \nabla \tca)\partial_t \bm \vv -\bm \nabla \cdot \bm {\breve \sigma}^0_f(\bm \vv,					\tilde{p}_f) = 0 & \textrm{in} \ \OF \times (0,T) , 
\\    
    \nabla \cdot (\tdet(\bm \nabla \tc)(\bm{\nabla \tc})^{-1} \bm \vv) = 0 & \textrm{in} \ \OF \times (0,T), 					
\\    
   \bm  \vv= \bm v_{\rm in} \circ \tc & \textrm{on} \ \Gamma_{\textrm{in}}(0) \times (0,T),
\\    
    \bm {\breve \sigma}^0_f(\bm \vv,\tilde{p}_f) \ \bm {\tilde n} = 0 & \textrm{on} \ \Gamma_{\textrm{out}}(0) \times (0,T),		
 \\
   	\rho_s \tdet(\bm \nabla \tp) \partial^2_t \tilde{\xi}_i 
   	-\displaystyle \sum_{\al,j,\be=1}^{3} \tbiajb
   	 \partial_{\al \be}^2 \xit_j 
=0 
\quad i=1,2,3,
& 			 
   	 \textrm{in} \ \OS\times (0,T),
\\	
	\bm \xxi = \bm 0  & \textrm{on} \ \Gamma_2(0) \times (0,T),
\\
	\bm \vv = \partial_t \bm \xxi & \textrm{on} \ \Gamma_c(0) \times (0,T) ,					
	\\
\left
[\bm {\breve \sigma}^0_f(\bm \vv,\tilde{p}_f) \bm {\tilde n} \right]_i =
	\displaystyle \sum_{\al,j,\be=1}^3 \bigg(
	\mathlarger \int_0^t 			
	\tbiajb
	 \partial^2_{s \be} \xit_j ds
	\bigg) \tilde{n}_{\al} \quad i=1,2,3,
	 & \textrm{on} \
	 \Gamma_c(0) \times (0,T),
	\\
	\bm \vv(.,0)=\bm v_0 \quad \textrm{and} \quad \tilde{p}_f(.,0)=p_{f_0} & \textrm{in} \ \OF,			
	\\
   	\bm \xxi(.,0)= \bm 0 \quad \textrm{and} \quad  \partial_t \bm \xxi(.,0)=\bm \xi_1  & \textrm{in} \ \OS, 
\end{cases}
\end{equation}
which is nothing but the auxiliary problem \eqref{Auxiliary-pro} when considering
\[
g_i=-\sum\limits_{\al,j,\be=1}^3 \bigg(\displaystyle \int_0^t \partial_s \tbiajb \partial_\be \xit_j \ ds \bigg) \tilde{n}_{\al}, \quad i=1,2,3.
\]




\begin{proposition}\label{Ex-Uni-Linear}

Let $(\bm {\breve v},\bm {\breve \xi}) \in A^T_M$, $\bm v_0 \in H^1(\OF), \ \textrm{and} \ \bm \xi_1 \in H^1(\OS)$ satisfying \eqref{intialcdtns} and \eqref{comp-cdtns}$_1$. For $T$ small with respect to $M$ and the initial conditions, there exists a unique weak solution $(\bm \vv,\bm \xxi) \in F^T_2 \times S^T_2$ of \eqref{Linear-System}. Moreover we get \textit{a piori} estimate on the solution given by
\begin{equation}\label{Second-APriori-estimate}
\norm \bm \vv \norm^2_{F^T_2} + \norm \bm \xxi \norm^2_{S^T_2} 
\leq 
C \norm \bm v_0 \norm^2_{H^1} + C \norm \bm \xi_1\norm^2_{H^1}.
\end{equation}
\end{proposition}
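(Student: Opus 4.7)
The plan is to identify the linear system \eqref{Linear-System} as an instance of the auxiliary problem \eqref{Auxiliary-pro}, apply Lemma \ref{Ex-Uni-AuxPro} to obtain a weak solution in $F^T_1 \times S^T_1$, and then upgrade its regularity by a differentiation-in-time / elliptic-regularity bootstrap.

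First I would rewrite the coupling stress condition \eqref{Linear-System}$_8$ via integration by parts in time,
\[
\int_0^t \tbiajb \,\partial^2_{s\beta}\xit_j\, ds
= \tbiajb(t)\,\partial_\beta \xit_j(t) - \int_0^t \partial_s \tbiajb\,\partial_\beta \xit_j\, ds,
\]
so that \eqref{Linear-System} becomes exactly \eqref{Auxiliary-pro} with
\[
g_i(t) = -\sum_{\alpha,j,\beta=1}^3 \Bigl(\int_0^t \partial_s \tbiajb\,\partial_\beta \xit_j\, ds\Bigr)\tilde n_\alpha.
\]
Clearly $\bm g(0)=\bm 0$, and using the trace theorem together with the bounds in Lemma \ref{Estimate-structure-defo} on $\tbiajb,\partial_t \tbiajb$ in $L^\infty(H^2(\OS))$ and on $\bm{\breve\xi}\in A^T_{M_2}$, one gets $\bm g\in H^1_l(0,T;L^2(\Gamma_c(0)))$ with a bound depending polynomially on $M$ and the initial data. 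Lemma \ref{Ex-Uni-AuxPro} then produces a unique $(\bm{\tilde v},\bm{\tilde \xi})\in F^T_1\times S^T_1$ together with the estimate \eqref{a-priori-es1}.

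The next step is to bootstrap the regularity. Formally differentiating \eqref{Linear-System}$_1$--\eqref{Linear-System}$_8$ in time and setting $\bm{\tilde u}=\partial_t\bm{\tilde v}$, $\bm{\tilde \zeta}=\partial_t\bm{\tilde \xi}$, the pair $(\bm{\tilde u},\bm{\tilde \zeta})$ satisfies an auxiliary-type system with the same principal part, modified coefficients coming from the time derivatives of $\tdet(\bm\nabla\tca)$, $\bm{\breve\sigma}^0_f$ and $\tbiajb$, a new right-hand side built from $(\bm{\tilde v},\bm{\tilde\xi})$ already controlled in $F^T_1\times S^T_1$, and a boundary datum of the same structural form as $\bm g$ above. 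The initial values $\bm{\tilde u}(0)$, $\bm{\tilde \zeta}(0)$ and $\partial_t\bm{\tilde\zeta}(0)$ are read off from \eqref{Linear-System} evaluated at $t=0$ and are controlled by $\|\bm v_0\|_{H^1}, \|\bm\xi_1\|_{H^1}, \|p_{f_0}\|$ thanks to the compatibility conditions \eqref{comp-cdtns}. Applying the a priori estimates from Section \ref{A Priori Estimates} to this differentiated system, exactly as in Step~1 and Step~2 of the proof of Lemma \ref{Ex-Uni-AuxPro}, yields
\[
\partial_t\bm{\tilde v}\in L^\infty(L^2(\OF))\cap L^2(H^1(\OF)),\qquad
\partial_t\bm{\tilde\xi}\in L^\infty(H^1(\OS)),\qquad
\partial_t^2\bm{\tilde\xi}\in L^\infty(L^2(\OS)),
\]
with norms bounded by $C(\|\bm v_0\|_{H^1}^2+\|\bm\xi_1\|_{H^1}^2)$ once $T$ is taken small with respect to $M$ and the data.

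It remains to recover the missing spatial regularity $\bm{\tilde v}\in L^\infty(H^2(\OF))$ and $\bm{\tilde\xi}\in L^\infty(H^2(\OS))$. For the fluid, at each fixed $t$ I would view \eqref{Linear-System}$_1$--\eqref{Linear-System}$_2$ as a stationary Stokes system in $(\bm{\tilde v},\tilde p_f)$ with source $-\rho_f\tdet(\bm\nabla\tca)\partial_t\bm{\tilde v}$ already in $L^\infty(L^2)$, with perturbations of the leading coefficients measured by $\|\cof(\bm\nabla\tca)-\Id\|_{L^\infty(H^3)}\le CT^\kappa M$ from Lemma \ref{Estimates on Flow} (small for $T$ small), and with boundary data controlled in the appropriate trace norms; classical Stokes elliptic regularity then gives $\bm{\tilde v}\in L^\infty(H^2)$. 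For the structure, at each fixed $t$ I would freeze the equation \eqref{Linear-System}$_5$ as a linear elliptic system in $\bm{\tilde\xi}$ with source $\rho_s\tdet(\bm\nabla\tp)\partial_t^2\bm{\tilde\xi}\in L^\infty(L^2)$ and Dirichlet/traction boundary conditions; the coercivity of the principal part is guaranteed by \eqref{coercivity}--\eqref{coercivity1} for $T$ small, yielding $\bm{\tilde\xi}\in L^\infty(H^2)$. Combining all these bounds produces \eqref{Second-APriori-estimate}. Uniqueness transfers from Lemma \ref{Ex-Uni-AuxPro}.

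The main obstacle I anticipate is the last step: extracting $H^2$-spatial regularity for the elastic displacement from a hyperbolic equation. This cannot be done by energy methods alone; it requires treating the wave-type equation as an elliptic system at each time with $\partial_t^2\bm{\tilde\xi}$ regarded as a known $L^2$ source, and checking that the perturbation of the principal part from its constant-coefficient linear elasticity form (governed by $L+Q$ terms in \eqref{SVKM}, \eqref{diajb-def}) stays small uniformly in $t$ via Lemma \ref{Estimate-structure-defo}, so that the usual elliptic regularity for Lamé-type systems applies. A secondary difficulty is handling the coupling on $\Gamma_c(0)$: the traction boundary condition must be shown to have enough regularity to feed both the Stokes and the elasticity elliptic regularity results, which is where the assumption $\bm g\in H^1_l(0,T;L^2(\Gamma_c(0)))$ and the compatibility conditions \eqref{comp-cdtns} are used most delicately.
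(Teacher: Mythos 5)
Your decomposition of the coupling condition and your regularity bootstrap (differentiate in time, then recover spatial $H^2$ by treating the fluid and elastic equations as elliptic at each fixed $t$) both match the paper's Sections 4.1--4.2. However, there is a genuine gap at the very first step, where you invoke Lemma \ref{Ex-Uni-AuxPro}.

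After your integration by parts, the boundary datum becomes
\[
g_i(t) = -\sum_{\alpha,j,\beta=1}^3\Bigl(\int_0^t \partial_s \tbiajb\,\partial_\beta \xit_j\,ds\Bigr)\tilde n_\alpha,
\]
and this $\bm g$ depends on the \emph{unknown} $\bm{\tilde\xi}$, not on given data. Lemma \ref{Ex-Uni-AuxPro} is stated for a \emph{given} $\bm g\in H^1_l(0,T;L^2(\Gamma_c(0)))$; you cannot apply it directly to a system whose boundary forcing is an operator of the solution itself. For the same reason, your last sentence "Uniqueness transfers from Lemma \ref{Ex-Uni-AuxPro}" does not follow, and the bound "$\bm g\in H^1_l(\dots)$ with a bound depending on $M$ and the initial data" is circular since $\bm g$ involves the quantity you are trying to estimate.

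The paper closes this gap with an inner fixed point: it replaces the unknown $\bm{\tilde\xi}$ in the boundary datum by a frozen $\bm{\hat\xi}\in S^T_2$, so that $\bm g=\bm{\hat h}$ is now genuinely given and $\|\bm{\hat h}\|_{H^1_l(L^2(\Gamma_c(0)))}\le CT^\kappa M\|\bm{\hat\xi}\|_{S^T_2}$ by \eqref{hat-h-estimate}. Lemma \ref{Ex-Uni-AuxPro} then yields a unique $(\bm{\tilde v},\bm{\tilde\xi})\in F^T_1\times S^T_1$, and the regularity bootstrap (your Steps 2--3, identical in spirit to the paper's \eqref{diff-in-t-weakeqn2}--\eqref{SEcond-estimate-with given hat fn}) upgrades it to $F^T_2\times S^T_2$ with the estimate $\|\bm{\tilde v}\|^2_{F^T_2}+\|\bm{\tilde\xi}\|^2_{S^T_2}\le CT^\kappa M\|\bm{\hat\xi}\|^2_{S^T_2}+C\|\bm v_0\|^2_{H^1}+C\|\bm\xi_1\|^2_{H^1}$. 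This defines a map $\Psi_0:\bm{\hat\xi}\mapsto\bm{\tilde\xi}$ on $S^T_2$; applying the estimate to differences shows $\|\Psi_0(\bm{\hat\xi}_1)-\Psi_0(\bm{\hat\xi}_2)\|_{S^T_2}\le CT^\kappa M\|\bm{\hat\xi}_1-\bm{\hat\xi}_2\|_{S^T_2}$, so $\Psi_0$ is a contraction for $T$ small, and Banach's fixed point theorem gives both existence and uniqueness for \eqref{Linear-System}, together with \eqref{Second-APriori-estimate}. Your proposal needs this intermediate freezing-and-contraction step to be sound.
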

Notice that, increasing the regularity of the initial data by considering $\bm v_0 \in H^1(\OF)$ and $\bm \xi_1 \in H^1(\OS)$ will lead to a more regular solution \cite{Evans, Brezis}. Using the regularity results we achieve a solution $(\bm \vv,\bm \xxi) \in F^T_2 \times S^T_2$.
%
%
%
Now we prove Proposition \ref{Ex-Uni-Linear}.
The proof is based on the fixed point theorem. Indeed, the first step is to find estimates on $\partial_t \bm {\tilde \gamma}$ in $F^T_2 \times S^T_2$ then we prove the existence and uniqueness of a weak solution of \eqref{Linear-System}.
\\
First, we consider System \eqref{Auxiliary-pro} with the function
\[
g_i=\hat{h}_i =-\sum\limits_{\al,j,\be=1}^3 \bigg(\displaystyle \int_0^t \partial_s \tbiajb \partial_\be \hat{\xi}_j \ ds \bigg) \tilde{n}_{\al}, \quad i=1,2,3.
\]
Observe that $\bm {\hat{h}} \in H^1_l(0,T;L^2(\Gamma_c(0))$. Thanks to \eqref{lq-boundedness} and the trace inequality we have 
\begin{equation}\label{hat-h-estimate}
\norm \bm {\hat{h}} \norm_{H^1_l(0,T;L^2(\Gamma_c(0)))}
\leq CT^{\kappa} M \norm \bm {\hat{\xi}} \norm_{S^T_2}.
\end{equation}
Therefore, as $\bm {\hat{\xi}}$ is fixed, by Lemma \ref{Ex-Uni-AuxPro} we get the existence and uniqueness of $(\bm \vv,\bm \xxi) \in F^T_1 \times S^T_1$ satisfying
\begin{equation}\label{First-A Priori estimate-modified}
\norm \bm \vv \norm^2_{F^T_1} + \norm \bm \xxi \norm^2_{S^T_1}
\leq
C
\Big[
\dfrac{\rho_f}{2} \norm \bm  v_0 \norm^2_{L^2}
+
\dfrac{\rho_s}{2} \norm \bm \xi_1 \norm^2_{L^2}
+
T^{\kappa}M \norm \bm {\hat{\xi}} \norm_{S^T_2}
\Big].
\end{equation}
To prove that the solution $(\bm \vv,\bm \xxi)$ is in the space $F^T_2 \times S^T_2$ we use the fixed point theorem. To this end we introduce the map $\Psi_0$ from $S^T_2$ to $S^T_2$ defined as 
\[
\Psi_0: \bm {\hat{\xi}} \longmapsto \bm \xxi.\]
As we mentioned previously, we ensure the existence of a weak solution $(\bm \vv,\bm \xxi) \in F^T_2 \times S^T_2$. In order to prove its uniqueness it is sufficient to prove that $\Psi_0$ is a contraction on $S_2^T$. This is achieved by deriving some a priori estimates on $\partial_t \bm {\tilde \gamma}$.

		


\subsection{Estimates on $\partial_t \tilde{v}$ and $\partial^2_t \tilde{\xi}$}

We proceed to derive \textit{a priori} estimates on $\partial_t \bm {\tilde \gamma}$. Differentiating in time the weak formulation \eqref{final-eqn-weak-1}. Taking $\bm {\tilde \eta}= \partial_t \bm {\tilde \gamma}$ yields
\begin{equation}\label{diff-in-t-weakeqn2}
\begin{cases}
\dfrac{\rho_f}{2} 
\displaystyle \int_{\OF} 
\tdet(\bm \nabla \tca)(t)
\lvert \partial_t \bm {\tilde \gamma} (t) \lvert^2 \ d\bm {\tilde x} +

\displaystyle \int_0^t \displaystyle \int_{\OF} \partial_s \bm {\breve \sigma}^0_f(\bm {\tilde \gamma}) : \partial_s \bm \nabla \bm {\tilde \gamma} \ d\bm {\tilde x} \ ds



\vspace{1mm}\\

+
\dfrac{\rho_s}{2}
\mathlarger \int_{\OS}
\tdet(\bm \nabla \tp)(t) \lvert \partial_t \bm {\tilde \gamma}(t) \lvert^2 \ d\bm {\tilde x}

+
\dfrac{\rho_f}{2}
\mathlarger \int_0^t
\mathlarger \int_{\OF}
\partial_s \tdet(\bm \nabla \tca) \lvert \partial_s \bm {\tilde \gamma} \lvert^2 \ d\bm {\tilde x} \ ds

\vspace{1mm}\\
+
\dfrac{\rho_s}{2}
\displaystyle \int_0^t 
\displaystyle \int_{\OS} 
\partial_s \tdet(\bm \nabla \tp)
\lvert \partial_s \bm {\tilde \gamma} \lvert^2 \ d\bm {\tilde x} \ ds


+\dfrac{1}{2} \displaystyle \sum_{i,\al,j,\be=1}^3  \displaystyle \int_{\OS} 
\Big[
\tbiajb \partial_{\be} \tilde{\gamma}_j
\partial_{\al} \tilde{\gamma}_i 
\Big](t) \ d\bm {\tilde x}
\vspace{1mm} \\
-\dfrac{1}{2}
\displaystyle \sum_{i,\al,j,\be=1}^3 \displaystyle \int_0^t
 \displaystyle \int_{\OS} \partial_s\tbiajb \partial^2_{s\be} (\mathsmaller  \int_0^s \tilde{\gamma}(\tau) d\tau)_j
 \
\partial^2_{s\al} ( \mathsmaller \int_0^s \tilde{\gamma}(\tau) d\tau)_i \ d\bm {\tilde x} \ ds


\vspace{1mm}\\
+\displaystyle \sum_{i,\al,j,\be=1}^3 \displaystyle \int_0^t
\displaystyle \int_{\OS} 
\partial_\al \tbiajb
\partial^2_{s\be} ( \mathsmaller \int_0^s \tilde{\gamma}(\tau) d\tau)_j \ 
\partial^2_s ( \mathsmaller \int_0^s \tilde{\gamma}(\tau) d\tau)_i \ d\bm {\tilde x} \ ds


\vspace{1mm}\\
-\displaystyle \sum_{i,\al,j,\be=1}^3 \displaystyle \int_0^t  
\displaystyle \int_{\OS}
\partial_s\tbiajb 
\partial^2_{\al \be} (\mathsmaller  \int_0^s \tilde{\gamma}(\tau) d\tau)_j
\
\partial^2_{s} (\mathsmaller  \int_0^s \tilde{\gamma}(\tau) d\tau)_i \ d\bm {\tilde x} \ ds


\vspace{1mm}\\
=
\displaystyle \int_0^t  \displaystyle \int_{\Gamma_c(0)} \partial_s \bm {\hat{h}} \cdot \partial^2_s (\mathsmaller  \int_0^s \bm {\tilde \gamma}(\tau) d\tau) \ d\tilde{\Gamma} \ ds
+
\displaystyle \int_{\OS}
\Big[ \mu_s \lvert \bm \epsilon(\bm {\tilde \gamma}(0))  \lvert^2 + \dfrac{\lambda_s}{2} \lvert \nabla \cdot ( \bm {\tilde \gamma}(0)) \lvert^2 \Big](t) \  d\bm {\tilde x}


\vspace{1mm}\\
+\displaystyle \sum_{i,\al,j,\be=1}^3 \displaystyle \int_0^t 
\displaystyle \int_{\Gamma_c(0)}
\partial_s\tbiajb
\partial_{\be} (\mathsmaller  \int_0^s \tilde{\gamma}(\tau) d\tau)_j  \partial^2_{s} (\mathsmaller  \int_0^s \tilde{\gamma}(\tau) d\tau)_i \ n_\al \ d\tilde{\Gamma} \ ds


\vspace{1mm} \\
+
\dfrac{\rho_f}{2}
\mathlarger \int_{\OF}
\lvert \partial_t \bm {\tilde \gamma}(0) \lvert^2 \ d\bm {\tilde x}

+
\dfrac{\rho_s}{2}
\mathlarger \int_{\OS}
\lvert \partial_t \bm {\tilde \gamma}(0) \lvert^2 \ d\bm {\tilde x}.

\end{cases}
\end{equation}
As for the stress term in \eqref{diff-in-t-weakeqn2} we have
\begin{equation}
\label{fstress-estimate-partialt}
\begin{split}
\displaystyle \int_0^t 
\displaystyle \int_{\OF}
 & \partial_s \bm {\breve \sigma}^0_f(\bm {\tilde \gamma}) :
 \partial_s \bm \nabla \bm {\tilde \gamma} \ d\bm {\tilde x} \ ds
=
 A_1+A_2+A_3.
 \end{split}
\end{equation}
\begin{align}\label{A_1}
 A_1  
=&
  \dfrac{\mu}{2} \displaystyle \int_0^t \displaystyle \int_{\OF}
 \tdet(\bm \nabla \tca)
 \Big \lvert
 \partial_s \nga (\nc)^{-1}+ \itnc \partial_s (\nga)^t \Big \lvert^2 \ d\bm {\tilde x} \ ds
\nonumber \\
& 
 \geq
\mu (C_k-CT^{\kappa}M) \lvert \lvert \bm {\tilde \gamma}_n \lvert \lvert^2_{H^1(H^1(\OS))}.
\end{align}
For $A_2$ we use Young's inequality with H\"older's inequality to obtain
\begin{equation}\label{A2}
\begin{split}
|A_2|
=&
\bigg \lvert
\mu \displaystyle \int_0^t \displaystyle \int_{\OF}
 \Big( \nga \partial_s \inc+ \partial_s \itnc (\nga)^t  \Big) \cch : \partial_s \nga 
 \ d\bm {\tilde x} \ ds 
 \bigg \lvert
\\
& \leq
CT^{1/2}M 
\Big[
C_\delta \norm \bm {\tilde \gamma} \norm^2_{L^\infty(H^2(\OF))}
+
\delta \norm \bm {\tilde \gamma} \norm^2_{H^1(H^1(\OF))} 
\Big].
\end{split}
\end{equation}
Similarly, for $A_3$ we have
\begin{equation}\label{A_3}
\begin{split}
|A_3|
=&
\bigg \lvert
\mu \displaystyle \int_0^t \displaystyle \int_{\OF} \Big( \nga (\nc)^{-1}+ \itnc (\nga)^t \Big) \partial_s \cch : \partial_s \nga \ d\bm {\tilde x} \ ds
\bigg \lvert
\\
&
\leq
CT^{1/2} M
\Big[
C_\delta \norm \bm {\tilde \gamma} \norm^2_{L^\infty(H^2(\OF))}
+
\delta \norm \bm {\tilde \gamma} \norm^2_{H^1(H^1(\OF))} 
\Big].
\end{split}
\end{equation}
Therefore, the summation of Equations \eqref{A2} and \eqref{A_3} is bounded above by
\begin{align}\label{A2+A3}
CT^{1/2} M 
\Big[
C_\delta \norm \bm {\tilde \gamma} \norm^2_{L^\infty(H^2)}
+
\delta \norm \bm {\tilde \gamma} \norm^2_{H^1(H^1(\OF))} 
\Big].
\end{align}
As for the integrals over $\OS$, first we have
%
%
\begin{equation}\label{estimate-partial-1}
\begin{split}
\dfrac{\rho_s}{2}
\mathlarger \int_{\OS}
\tdet(\bm \nabla \tp)(t) \lvert \partial_t \bm {\tilde \gamma}(t) &\lvert^2 \ d\bm {\tilde x}
+
\dfrac{\rho_s}{2}
\displaystyle \int_0^t 
\displaystyle \int_{\OS} 
\partial_s \tdet(\bm \nabla \tp)
\lvert \partial_s \bm {\tilde \gamma} \lvert^2 \ d\bm {\tilde x} \ ds
\\
&\geq
\dfrac{\rho_s}{2}(1-CT^\kappa M) \norm \partial_t \bm {\tilde \gamma} \norm^2_{L^\infty(L^2(\OS))}.
\end{split}
\end{equation}
On the contrary, using \eqref{coercivity} and \eqref{coercivity1} with Korn's inequality gives
\begin{equation}\label{estimate-partial-2}
\begin{split}
\dfrac{1}{2} 
&
\displaystyle \sum_{i,\al,j,\be=1}^3  \displaystyle \int_{\OS} 
\Big[ \tbiajb \partial_{\be} \gamma_j \partial_{\al} \gamma_i \Big](t) \ d\bm {\tilde x}
\\
&\geq 
\mu_s C_k \norm \bm {\tilde \gamma} \norm^2_{L^\infty(H^1(\OS))}
+
\dfrac{\mathsf{C}+\lambda_s}{2} \norm \nabla \cdot \bm {\tilde \gamma} \norm^2_{L^\infty(L^2(\OS))}
- CT^\kappa M \norm \bm {\tilde \gamma} \norm^2_{L^\infty(H^1(\OS))}.
\end{split}
\end{equation}
On the other hand, using \eqref{lq-boundedness} and \eqref{lq1-boundedness} we have
%
%
\begin{equation}\label{total-sum-3-estimates}
\begin{split}
\Bigg \lvert
\displaystyle \sum_{i,\al,j,\be=1}^3
&
\Bigg[
-\dfrac{1}{2}
\displaystyle \int_0^t
\displaystyle \int_{\OS}
 \partial_s\tbiajb 
\partial^2_{s\be} ( \mathsmaller \int_0^s \tilde{\gamma}(\tau) d\tau)_j
 \
\partial^2_{s\al} ( \mathsmaller \int_0^s \tilde{\gamma}(\tau) d\tau)_i \ d\bm {\tilde x} \ ds 
\\ 
&+
\displaystyle \int_0^t
\displaystyle \int_{\OS} 
\partial_\al \tbiajb
\partial^2_{s\be} ( \mathsmaller \int_0^s \tilde{\gamma}(\tau) d\tau)_j \ 
\partial^2_s ( \mathsmaller \int_0^s \tilde{\gamma}(\tau) d\tau)_i \ d\bm {\tilde x} \ ds
\vspace{3mm} \nonumber \\
&-\displaystyle \int_0^t  
\displaystyle \int_{\OS}
\partial_s\tbiajb 
\partial^2_{\al \be} ( \mathsmaller \int_0^s \tilde{\gamma}(\tau) d\tau)_j
\
\partial^2_{s} ( \mathsmaller \int_0^s \tilde{\gamma}(\tau) d\tau)_i \ d\bm {\tilde x} \ ds
\Bigg ]
\Bigg \lvert
\\
\leq &
CT^\kappa M 
\Big[
C_\delta
\norm \mathsmaller \int_0^{\bigcdot} \bm {\tilde \gamma}(s) ds 
\norm^2_{L^\infty(H^2(\OS))}
+
\delta
\norm \partial_t \bm {\tilde \gamma} \norm^2_{L^\infty(L^2(\OS))}
+
\norm \bm {\tilde \gamma} \norm^2_{L^\infty(H^1(\OS))}
\Big].
\end{split}
\end{equation}
For the integrals across the boundary we use \eqref{hat-h-estimate},Young's inequality in addition to the trace inequality to obtain
\begin{equation}\label{partial_h}
\begin{split}
\Bigg \lvert
\displaystyle \int_0^t  \displaystyle \int_{\Gamma_c(0)} \partial_s \bm {\hat{h}} \cdot \partial_s \bm {\tilde \gamma} \ d\tilde{\Gamma} \ ds
\Bigg \lvert
\leq
CT^{\kappa}M 
\big[
C_\delta
\norm \bm {\hat{\xi}} \norm^2_{S^T_2} 
+
\delta
\norm \bm {\tilde \gamma} \norm^2_{H^1(H^1(\OF))}
\big]
,
\end{split}
\end{equation}
and for $i,\al,j,\be=1,2,3,$ we have
\begin{equation}\label{partial-boundary}
\begin{split}
\Bigg \lvert
&
\displaystyle \sum_{i,\al,j,\be=1}^3 
\displaystyle \int_0^t \displaystyle \int_{\Gamma_c(0)}
\partial_s\tbiajb 
\partial_{\be} (\mathsmaller \int_0^s \tilde{\gamma}(\tau) d\tau)_j  \partial_s \tilde{\gamma}_i \ n_\al \ d\tilde{\Gamma} \ ds
\Bigg \lvert
\\
& \leq
CT^{1/2}(M+M^2+M^3+M^4) 
\Big[
C_\delta
\norm \mathsmaller \int_0^{\bigcdot} \bm {\tilde \gamma}(s) ds
\norm^2_{L^\infty(H^2(\OS))}
+
\delta
\norm \bm {\tilde \gamma} \norm^2_{H^1(H^1(\OF))}
\Big]
.
\end{split}
\end{equation}

Therefore, considering the restriction of $\bm {\tilde \gamma}$ on each sub-domain, and for $T$ small with respect to $M$ and the initial conditions, i.e, the factors $CT^\kappa M$ and $CT^{1/2}M^4$ are negligible, and using \eqref{First-A Priori estimate-modified} we get 
\begin{align}\label{Second-A priori-estimate}
\norm \bm \vv \norm^2_{W^{1,\infty}(L^2)}+\norm \bm \vv \norm^2_{H^1(H^1)}
+
\norm \bm \xxi \norm^2_{W^{2,\infty}(L^2)} +\norm \bm \xxi \norm^2_{W^{1,\infty}(H^1)}
\nonumber
\end{align}
\begin{flalign}
\leq
C \norm \bm v_0 \norm^2_{H^1}+ C \norm \bm \xi_1 \norm^2_{H^1}
+CT^{\kappa}M\big (\norm \bm \xxi \norm^2_{S^T_2} + \norm \bm {\hat{\xi}} \norm^2_{S^T_2} \big), &
\end{flalign}




\subsection{Estimates Using Spatial Regularity}\label{Estimates1-spatial}
We have proved that the linear system has a strong solution $(\bm \vv,\bm \xxi) \in F^T_2 \times S^T_2$. Therefore, for all $t \in (0,T)$, the fluid velocity $\bm \vv$ satisfies the following equation
\begin{align*}
\bm \nabla \cdot \bm {\breve \sigma}^0_f (\bm {\tilde v})=\rho_f
\tdet(\bm \nabla \tca) \partial_t \bm  \vv \qquad & \textrm{in} \quad \OF,
\end{align*}
which can be rewritten as
\begin{align*}
\mu \bm \nabla \cdot \big(\nv+(\nv)^t)=\rho_f
\tdet(\bm \nabla \tca) \partial_t \bm \vv + F_{\tilde {v}} \qquad & \textrm{in} \quad \OF,
\end{align*}
with
\begin{align*}
F_{\tilde {v}} = - \mu \nabla \cdot
\bm {f_{\vv}},
\end{align*}
where 
\begin{align*}
\bm {f_{\vv}}=
\Big(\nv \big( \inc-\Id \big) + \big(\itnc -\Id \big) \nv)^t \Big) \cch 
- \big( \nv +(\nv)^t \big ) \big(\cch -\Id \big). 
\end{align*}
Using Lemma \ref{Estimates on Flow} we have
\begin{align*}
\norm F_{\tilde v} \norm_{L^\infty(L^2)} \leq \norm \bm {f_{\vv}} \norm_{L^\infty(H^1)}
\leq 2\mu CT^{\kappa}M \norm \bm  \vv \norm_{L^\infty(H^2)}.
\end{align*}
Hence, we obtain
\begin{align}\label{elliptic-estimate-fluid}
\mu \norm \bm \vv \norm_{L^\infty(H^2)} \leq \rho_f
CT^\kappa M \norm \partial_t \bm  \vv \norm_{L^\infty(L^2)} + 2 \mu CT^{\kappa}M \norm \bm \vv \norm_{L^\infty(H^2)}.
\end{align}
%
%
%
%
%
Besides, the structure displacement $\bm \xxi$ satisfies the following equation
\begin{align}\label{elliptic-structure-disp}
-\bm \nabla \cdot \Big(2 \mu_s \bm \epsilon(\bm \xxi)+ \lambda_s(\nabla \cdot \bm \xxi)\Id \Big)=
 -\rho_s \tdet(\bm \nabla \tp)\partial^2_t \xxi
 +
 \bm {H^c_{\xxi}} 
+\bm {H^d_{\xxi}},
\end{align}
with
\begin{align*}
H^c_{\xit,i} = 
\sum_{\al,j,\be=1}^3 \big( \tciajb^l + \tciajb^q \big) \partial^2_{\al \be} \xit_j , \quad \textup{for} \ i=1,2,3,
\end{align*}
and
\begin{align*}
H^d_{\xit,i}
=
\mathsf{C}\sum_{\al,j,\be=1}^3 \big( \tdiajb^L + \tdiajb^Q+
\tdiajb^T+\tdiajb^F \big) \partial^2_{\al \be} \xit_j , \quad \textup{for} \ i=1,2,3.
\end{align*}
Using elliptic estimates and thanks to \eqref{lq-boundedness}
we get
\begin{align}\label{elliptic-estimate-structure}
\norm \bm \xxi \norm_{L^\infty(H^2)} \leq \rho_s CTM \norm \partial^2_t \bm \xxi \norm_{L^\infty(L^2)}+CT(M+M^2+M^3+M^4) \norm \bm \xxi \norm_{L^\infty(H^2)}.
\end{align}
\\
To bound $\norm \partial_t \vv \norm_{L^\infty(L^2(\OF))}$ and $\norm \partial_t^2 \xxi \norm_{L^\infty(L^2(\OS))}$ we use \eqref{Second-A priori-estimate}.
Finally, taking $T$ small with respect to $M$ and the initial conditions in \eqref{elliptic-estimate-fluid} and \eqref{elliptic-estimate-structure}, then combining them with \eqref{Second-A priori-estimate}, we achieve the following estimate
\begin{align}\label{SEcond-estimate-with given hat fn}
\norm \bm \vv \norm^2_{F^T_2} + \norm \bm \xxi \norm^2_{S^T_2} \leq CT^\kappa M \norm \bm {\hat{\xi}} \norm^2_{S^T_2} +C \norm \bm v_0 \norm^2_{H^1} + C \norm \bm \xi_1 \norm^2_{H^1}.
\end{align}

\subsection{Fixed Point Theorem for the Linearized System}
Based on the estimate \eqref{SEcond-estimate-with given hat fn} on the solution $(\bm \vv,\bm \xxi)$ of the linear system \eqref{Linear-System}, we proceed to prove that the function $\Psi_0$ is a contraction on $S^T_2$.
Let $\bm {\hat{\xi}}_1, \bm {\hat{\xi}}_2 \in S^T_2$. For $a=1,2$, we denote by $(\bm \vv_a,\bm \xxi_a)$ the solution of \eqref{Auxiliary-pro} with
\begin{align*}
g_i=\hat{h}_i^a = -\sum\limits_{\al,j,\be=1}^3 \bigg(\displaystyle \int_0^t \partial_s \tbiajb \partial_\be (\hat{\xi}_a)_j \ ds \bigg) \tilde{n}_{\al}, \quad i=1,2,3.
\end{align*}
Since $(\bm \vv_1,\bm \xxi_1)$ and $(\bm \vv_2,\bm \xxi_2)$ satisfy System \eqref{Auxiliary-pro}, then we can say that $(\bm \vv_1-\bm \vv_2,\bm \xxi_1-\bm \xxi_2)$ satisfies System \eqref{Auxiliary-pro} with $g_i = \hat{h}_i^1-\hat{h}_i^2$ and null initial data. Hence, applying \eqref{SEcond-estimate-with given hat fn} to $(\bm \vv_1-\bm \vv_2,\bm \xxi_1-\bm \xxi_2)$ and noticing that the right hand side of the estimate contains only a norm on $S^T_2$ given by $\norm \bm {\hat{\xi}}_1 - \bm {\hat{\xi}}_2 \norm_{S^T_2}$ added to some constants, consequently we get 
\begin{align}
\norm \bm \xxi_1 - \bm \xxi_2 \norm_{S^T_2} 
= 
\norm \Psi_0(\bm {\hat{\xi}_1})-\Psi_0(\bm {\hat{\xi}_2}) \norm_{S^T_2} \leq CT^{\kappa} M \norm \bm {\hat{\xi}_1} -\bm { \hat{\xi}_2}\norm_{S^T_2}.
\end{align}
Taking $T$ small enough with respect to $M$, gives that $\Psi_0$ is a contraction on $S^T_2$. Therefore, we assure the existence and uniqueness of a fixed point $\bm \xxi \in S^T_2$.
Consequently, we obtain the existence and uniqueness of a solution $(\bm \vv,\bm \xxi)$ for the system \eqref{Linear-System}.
Finally, with the assumption of $T$ being small with respect to $M$ and denoting $C \norm \bm v_0 \norm^2_{H^1} + C \norm \bm \xi_1 \norm^2_{H^1}$ by $\bm C_0$
 we obtain
\begin{align}\label{2nd-estimate-with given hat fn}
\norm \bm \vv \norm^2_{F^T_2} + \norm \bm \xxi \norm^2_{S^T_2} 
\leq 
\bm C_0.
\end{align}

\section{Regularity of Solution of the Linearized System
}\label{Regularity-sec}

\subsection{Regularity of the solution}

\begin{proposition}\label{Regularity-LinearSystem}
Let $(\bm {\breve v},\txi)\in A^T_M$, with the  assumption that $\bm v_0 \in H^6(\OF)$ and $\bm \xi_1 \in H^3(\OS)$ and satisfies \eqref{comp-cdtns}. For $T$ small with respect to $M$ and the initial conditions, the solution $(\vv,\xxi)$ is in the space $F^T_4 \times S^T_4$. Further, it satisfies
\begin{align}
\norm \vv \norm_{F^T_4}
+
\norm \xxi \norm_{S^T_4}
\leq
\bm C_0,
\end{align} 
where $\bm C_0$ denotes a constant in the norms $\norm \bm v_0 \norm_{H^6(\OF)}$ and $\norm \bm \xi_1 \norm_{H^3(\OS)}$.
\end{proposition}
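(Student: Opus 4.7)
The plan is to bootstrap the $F^T_2 \times S^T_2$ regularity of Proposition~\ref{Ex-Uni-Linear} up to $F^T_4 \times S^T_4$ by alternating two operations: (i) differentiating the weak formulation \eqref{final-eqn-weak-1} in time and repeating the energy argument of Section~\ref{An Auxiliary Problem} to gain one additional time derivative in $L^2$-in-space, then (ii) rewriting the time-differentiated equations as elliptic problems, exactly as in Subsection~\ref{Estimates1-spatial}, in order to trade each unit of time regularity for two units of spatial regularity via classical Stokes and Lam\'e estimates.

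The preparatory step is to verify that the compatibility conditions \eqref{comp-cdtns}, combined with $\bm v_0 \in H^6(\OF)$, $\bm \xi_1 \in H^3(\OS)$ and $p_{f_0} \in H^3(\OF)$, allow one to define $\partial_t^k \vv(\cdot,0)$ and $\partial_t^{k+1} \xxi(\cdot,0)$ in $L^2$ for $k=0,1,2,3$. These initial traces are obtained by formally differentiating \eqref{Linear-System} in time and evaluating at $t=0$; the auxiliary quantities $S_1,E_1,S_2,E_2,S_3,S_4$ listed after \eqref{comp-cdtns} are exactly the traces that encode the continuity of the first and second time derivatives of the stress across $\Gamma_c(0)$, and hence make the higher energy identities well posed.

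Next comes the energy loop. For $k=1,2,3$, differentiate \eqref{final-eqn-weak-1} $k$ times in time and test with $\bm {\tilde \eta} = \partial_t^k \bm {\tilde \gamma}$, mimicking what was done for $k=1$ in the proof of Proposition~\ref{Ex-Uni-Linear}. The principal symmetric terms yield the coercive combination
\[
\tfrac{\rho_f}{2}\norm\partial_t^k \bm {\tilde \gamma}\norm^2_{L^\infty(L^2(\OF))} + \tfrac{\rho_s}{2}\norm\partial_t^k \bm {\tilde \gamma}\norm^2_{L^\infty(L^2(\OS))} + \mu C_k \norm\partial_t^k \bm {\tilde \gamma}\norm^2_{L^2(H^1(\OF))} + \mu_s C_k\norm\partial_t^{k-1} \bm {\tilde \gamma}\norm^2_{L^\infty(H^1(\OS))},
\]
while the commutators produced by time-differentiating the coefficients $\tdet(\nc)$, $\tdet(\bm \nabla \tp)$, $\tbiajb$, $(\nc)^{-1}$ and $\cof(\nc)$ are bounded through Lemmas~\ref{Estimates on Flow} and~\ref{Estimate-structure-defo} by terms of the form $CT^\kappa M \norm\cdot\norm^2$, which are absorbed for $T$ small with respect to $M$ and the initial data. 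This produces, for $k \leq 3$, the estimate
\[
\norm\partial_t^k \vv\norm^2_{L^\infty(L^2)\cap L^2(H^1)} + \norm\partial_t^{k+1}\xxi\norm^2_{L^\infty(L^2)} + \norm\partial_t^k \xxi\norm^2_{L^\infty(H^1)} \leq \bm C_0.
\]
One then iterates the elliptic bootstrap of Subsection~\ref{Estimates1-spatial}: applying $\partial_t^k$ to \eqref{Linear-System}$_1$ and rearranging as in \eqref{elliptic-estimate-fluid} gives a Stokes problem for $\partial_t^k \vv$ whose right-hand side lies in $H^{s-1}$ with norm controlled by $\partial_t^{k+1}\vv$ in $H^{s-1}$ plus a $CT^\kappa M$-factored term; Stokes regularity supplies two additional spatial derivatives. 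The analogous treatment of \eqref{elliptic-structure-disp} for $\partial_t^k \xxi$ uses \eqref{lq-boundedness} and \eqref{lq1-boundedness}. Starting from the $(k=3,s=0)$ estimate and decreasing $k$ by one while increasing $s$ by two eventually yields $(\vv,\xxi) \in L^\infty(H^4)$, and the intermediate spaces $W^{2,\infty}(H^2)$, $H^3(H^1)$, $W^{3,\infty}(L^2)$ in $F^T_4$ (and $W^{4,\infty}(L^2)$ in $S^T_4$) follow directly from the combined inequalities.

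The main obstacle will be the interface contributions on $\Gamma_c(0)$ produced when the natural stress coupling \eqref{Linear-System}$_8$ is differentiated in time. Each extra $\partial_t^k$ spawns boundary integrals of the type $\int_0^t \int_{\Gamma_c(0)} \partial_t^j \tbiajb \,\partial_\be \partial_t^{k-j} \xit_j\, \tilde n_\al\,d\tilde\Gamma\,ds$, which must be tamed by the trace inequality and then either absorbed into the elastic energy through a $CT^\kappa M$-factor or controlled by $\bm C_0$ using precisely the higher-order conditions in \eqref{comp-cdtns} that force the integrand to vanish at $t=0$. Keeping careful track of these interface terms as $k$ grows, together with the fact that each differentiation of $\tbiajb$ brings one more factor of $\bm \nabla \txi$ handled via \eqref{lq-boundedness}--\eqref{lq1-boundedness}, is the technical heart of the argument and explains why the $H^6$/$H^3$ regularity of the data cannot be weakened.
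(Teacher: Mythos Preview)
Your proposal is correct and follows essentially the same route as the paper: the paper differentiates the weak formulation three times in time and tests with $\partial_t^3\bm{\tilde\gamma}$ to obtain the $W^{3,\infty}(L^2)\cap H^3(H^1)$ control (with commutator terms $C_1,\dots,C_4$ handled exactly via Lemmas~\ref{Estimates on Flow} and~\ref{Estimate-structure-defo} as you describe), and then performs the elliptic bootstrap in two steps---first to $W^{2,\infty}(H^2)$ by differentiating the elliptic equations \eqref{Elliptic-fluid} and \eqref{elliptic-structure-disp} twice in time, then to $L^\infty(H^4)$ via the undifferentiated equations---which matches your $(k,s)$ trade-off scheme. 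The only cosmetic difference is that the paper jumps directly to the $k=3$ energy identity rather than iterating $k=1,2,3$, and it outsources the qualitative regularity statement to a citation before deriving the quantitative bound; your more self-contained inductive presentation is arguably cleaner.
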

By Proposition \ref{Ex-Uni-Linear}, we have proved the existence and uniqueness of $(\vv,\xxi) \in F^T_2 \times S^T_2$. Increasing the regularity of the initial conditions results a more regular solution \cite{Evans, Brezis}. The regularity of the solution in case of a linear fluid-structure interaction problem where the structure is considered to be quasi-incompressible have been proved in \cite{Coutand-Quasi}. Hence $(\vv,\xxi)$ belongs to $F^T_4 \times S^T_4$. 
\\

Next, we proceed to derive a priori estimates on the solution $(\vv,\xxi)$ in $F^T_4 \times S^T_4$.

\subsection{A Priori estimates on $\tilde{\gamma}$ in $A^T_M$}

\subsubsection*{A Priori Estimates Using Time Regularity}

The solution $\bm {\tilde \gamma}$ satisfies \eqref{final-eqn-weak-1}
with
\[
g_i=-\sum\limits_{\al,j,\be=1}^3 \bigg(\displaystyle \int_0^t \partial_s \tbiajb \partial_\be \xit_j \ ds \bigg) \tilde{n}_{\al}, \qquad i=1,2,3.
\]
Differentiating three times with respect to time and taking $\bm {\tilde \eta}=
\partial_t^3 \bm{\tilde \gamma}$ yield
\begin{equation}\label{final-eqn-weak-regular}
\begin{cases}
\rho_f 
\displaystyle
\int_{\OF} 
\tdet(\bm \nabla \tca)\partial^4_t \bm {\tilde \gamma} \cdot \partial_t^3 \bm{\tilde \gamma} \ d\bm {\tilde x}
+
C_1

+\rho_s 
\displaystyle
\int_{\OS} \tdet(\bm \nabla \tp) \partial^4_t \bm {\tilde \gamma} \cdot \partial_t^3 \bm{\tilde \gamma} \ d\bm {\tilde x}
+
C_2

\vspace{2mm} \\

+
\displaystyle
\mu
\int_{\OF}

\partial^3_t
 \Big(
  \nga (\nc)^{-1}
 +  \itnc (\nga)^t 
 \Big) \cch 
 : 
  \bm \nabla \partial^3_t \bm {\tilde \gamma} \ d\bm {\tilde x}
+
C_3

\vspace{2mm} \\

+
\displaystyle
\sum_{i,\al,j,\be=1}^3 
\int_{\OS} 
\tbiajb \partial_\be 
\partial^2_t
\tilde{\gamma}_j  
\ \partial_\al \partial_t^3 \tilde {\gamma}_i \ d\bm {\tilde x} 
+C_4

=

\displaystyle
\int_{\Gamma_c(0)}
\partial_t^3 \bm g
\cdot
\partial_t^3 \bm {\tilde \gamma} \ d\tilde{\Gamma},

\end{cases}
\end{equation}
where,
\begin{align*}
C_1
=
\displaystyle
3 \rho_f
\int_{\OF} 
\partial_t
\tdet(\bm \nabla \tca) \partial^3_t \bm {\tilde \gamma} \cdot \partial_t^3 \bm{\tilde \gamma} \ d\bm {\tilde x}
+
\displaystyle
3 \rho_f
\int_{\OF} 
\partial_t^2
\tdet(\bm \nabla \tca)\partial^2_t \bm {\tilde \gamma} \cdot \partial_t^3 \bm{\tilde \gamma} \ d\bm {\tilde x}
+ \rho_f
\displaystyle
\int_{\OF} 
\partial_t^3
\tdet(\bm \nabla \tca)\partial_t \bm {\tilde \gamma} \cdot \partial_t^3 \bm{\tilde \gamma} \ d\bm {\tilde x},
\end{align*}
\begin{align*}
C_2
=
\displaystyle
3 \rho_s
\int_{\OS} 
\partial_t
\tdet(\bm \nabla \tp) \partial^3_t \bm {\tilde \gamma} \cdot \partial_t^3 \bm{\tilde \gamma} \ d\bm {\tilde x}
+
\displaystyle
3 \rho_s
\int_{\OS} 
\partial_t^2
\tdet(\bm \nabla \tp)\partial^2_t \bm {\tilde \gamma} \cdot \partial_t^3 \bm{\tilde \gamma} \ d\bm {\tilde x}
+ \rho_s
\displaystyle
\int_{\OS} 
\partial_t^3
\tdet(\bm \nabla \tp)\partial_t \bm {\tilde \gamma} \cdot \partial_t^3 \bm{\tilde \gamma} \ d\bm {\tilde x},
\end{align*}
\begin{align*}
C_3
=&
\displaystyle
3 \mu
\int_{\OF} 
\partial^2_t
 \Big(
  \nga (\nc)^{-1}
 +  \itnc (\nga)^t 
 \Big) 
 \partial_t \cch 
 :
 \bm \nabla \partial^3_t \bm {\tilde \gamma} \ d\bm {\tilde x}
\\
&
+
\displaystyle
3 \mu
\int_{\OF} 
\partial_t
 \Big(
  \nga (\nc)^{-1}
 +  \itnc (\nga)^t 
 \Big) 
 \partial^2_t \cch 
 : 
 \bm \nabla \partial^3_t \bm {\tilde \gamma} \ d\bm {\tilde x}
\\
&+
\displaystyle
\mu
\int_{\OF} 
 \Big(
  \nga (\nc)^{-1}
 +  \itnc (\nga)^t 
 \Big) 
 \partial^3_t \cch 
 : 
 \bm \nabla \partial^3_t \bm {\tilde \gamma} \ d\bm {\tilde x},
\end{align*}
\begin{align*}
C_4
=& 
\displaystyle
\sum_{i,\al,j,\be=1}^3 
\int_{\OS} \partial^3_t
\tbiajb \partial_\be 
( 
\mathsmaller 
\int_0^t \tilde{\gamma}(s) ds)_j  
\ \partial_\al \partial_t^3 \tilde {\gamma}_i \ d\bm {\tilde x} 
+
\displaystyle
3
\sum_{i,\al,j,\be=1}^3 
\int_{\OS} \partial^2_t
\tbiajb \partial_\be 
\tilde{\gamma}_j  
\ \partial_\al \partial_t^3 \tilde {\gamma}_i \ d\bm {\tilde x}
\\
&
+3
\displaystyle
\sum_{i,\al,j,\be=1}^3 
\int_{\OS} \partial_t
\tbiajb \partial_\be 
\partial_t
\tilde{\gamma}_j  
\ \partial_\al \partial_t^3 \tilde {\gamma}_i \ d\bm {\tilde x}
+
\displaystyle
\sum_{i,\al,j,\be=1}^3 
\int_{\OS} 
\partial_t
\Big(\partial_\al \tbiajb \partial_\be 
( 
\mathsmaller 
\int_0^t 
\tilde{\gamma}(s) ds)_j 
\Big) 
\ \partial_t^3 \tilde {\gamma}_i \ d\bm {\tilde x}.
\end{align*}
First proceeding as in \eqref{fluid-est1} we get
\begin{align}\label{Reg-fe1}
\rho_f 
\displaystyle
\int_0^t
\int_{\OF} 
\tdet(\bm \nabla \tca)\partial^4_s \bm {\tilde \gamma} \cdot \partial_s^3 \bm{\tilde \gamma} \ d\bm {\tilde x} \ ds
\geq
\dfrac{\rho_f}{2}
(1-CT^\kappa M) \norm \partial^3_t \bm {\tilde \gamma} \norm^2_{L^\infty(L^2(\OF))}
-
\norm \bm v_0 \norm^2_{H^6}.
\end{align}
For the fluid stress term we proceed as in \eqref{fstress-estimate-partialt} to get
\begin{align}\label{Regular-fe2}
\displaystyle
\mu
\int_0^t
\int_{\OF}  
\partial^3_s
 \Big(
  \nga (\nc)^{-1}
 +  \itnc (\nga)^t 
 \Big) \cch 
 : \bm \nabla \partial^3_s \bm {\tilde \gamma}\ d\bm {\tilde x} \ ds
\geq
\mu ( C_k- CT^\kappa M )
\norm \partial^3_t \bm {\tilde \gamma} \norm_{L^2(H^1(\OF))}.
\end{align}
On the domain $\OS$, similarly as \eqref{Reg-fe1}, we have
\begin{align}\label{Regular-se1}
\rho_s 
\displaystyle
\int_0^t
\int_{\OS} 
\tdet(\bm \nabla \tp)\partial^4_s \bm {\tilde \gamma} \cdot \partial_s^3 \bm{\tilde \gamma} \ d\bm {\tilde x} \ ds
\geq
\dfrac{\rho_s}{2}
(1-CT^\kappa M) \norm \partial^3_t \bm {\tilde \gamma} \norm^2_{L^\infty(L^2(\OS))}
-
\norm \bm \xi_1 \norm^2_{H^3}.
\end{align}
Using \eqref{lq-boundedness}-\eqref{coercivity} with Korn's inequality give
\begin{align}\label{Regular-se2}
\displaystyle
\sum_{i,\al,j,\be=1}^3
\int_0^t
\int_{\OS} 
\tbiajb \partial_\be 
\partial^2_s
\tilde{\gamma}_j  
\ \partial_\al \partial_s^3 \tilde{\gamma}_i \ d \bm{\tilde x} \ ds
\geq
\mu_s C_k \norm \partial^2_t \bm {\tilde \gamma} 
\norm^2_{L^\infty(H^1(\OS))}
-
CT^\kappa M 
\norm \mathsmaller 
\int_0^{\bigcdot}
\bm {\tilde \gamma}(s) ds \norm^2_{S^T_4}. 
\end{align}
Further, proceeding as in \eqref{partial_h} and \eqref{partial-boundary} we get
\begin{equation}\label{Reg-g}
\begin{split}
\Bigg \lvert
\displaystyle
\int_0^t
\int_{\Gamma_c(0)}
\partial_s^3 \bm g
\cdot
\partial_s^3 \bm {\tilde \gamma}
\ d\tilde{\Gamma} \ ds
\Bigg \lvert
\leq
CT^\kappa M 
\norm \mathsmaller \int_0^\bigcdot 
\bm {\tilde \gamma}(s) ds \norm_{S^T_4}
\norm \bm {\tilde \gamma} \norm_{F^T_4}.
\end{split}
\end{equation}
On the other hand, to deal with $C_1$, $C_2$, $C_3$ and $C_4$ we use the following bounds 
\begin{align}\label{BOUNDS-higher-der}
\norm \partial^k_t \tdet(\bm \nabla \tca) \norm_{L^\infty(L^\infty(\OF))}
\leq
CM^k, \qquad k=1,2,3.
\\
\norm \partial^k_t \tdet(\bm \nabla \tp) \norm_{L^\infty(L^\infty(\OS))}
\leq
CM^k, \qquad k=1,2,3.
\\
\norm \partial^k_t \itnc \norm_{L^\infty(L^\infty(\OF))}
\leq
CM^k, \qquad k=1,2,3.
\\
\norm \partial^k_t \cof(\bm \nabla \tca) \norm_{L^\infty(L^\infty(\OF))}
\leq
CM^k, \qquad k=1,2,3.
\end{align}
Then,
\begin{align}\label{regular-fe3}
\displaystyle 
\int_0^t 
C_1 \ ds
\leq
CT^\kappa M \norm \bm {\tilde \gamma} \norm^2_{F^T_4}.
\end{align}
Similarly
\begin{align}\label{regular-fe4}
\displaystyle 
\int_0^t 
C_2 \ ds
\leq
CT^\kappa M \norm \bm {\tilde \gamma} \norm^2_{F^T_4}.
\end{align}
On the other hand,
\begin{align}\label{regular-se3}
\displaystyle 
\int_0^t 
C_3 \ ds
\leq
CT^\kappa M \norm 
\mathsmaller 
\int_0^\bigcdot
\bm {\tilde \gamma}(s) ds \norm^2_{S^T_4}
\end{align}
and
\begin{align}\label{regular-se4}
\displaystyle 
\int_0^t 
C_4 \ ds
\leq
CT^\kappa M \norm 
\mathsmaller 
\int_0^\bigcdot
\bm {\tilde \gamma}(s) ds \norm^2_{S^T_4}.
\end{align}
Combining \eqref{Reg-fe1}-\eqref{Reg-g} with \eqref{regular-fe3}-\eqref{regular-se4}
and considering the restriction of $\bm {\tilde \gamma}$ on each sub-domain give
\begin{align}\label{Estimate-reg-time}
&\norm \partial_t^3 \vv \norm^2_{L^\infty(L^2(\OF))}
+
\norm \partial_t^2 \vv \norm^2_{L^2(H^1(\OF))}
+
\norm 
\partial_t^3 \xxi \norm^2_{L^\infty(H^1(\OS))}
+
\norm 
\partial_t^4
\xxi \norm^2_{L^\infty(L^2(\OS))}
\nonumber \\
&\leq
CT^\kappa M 
(
\norm \vv \norm^2_{F^T_4}
+
\norm \xxi \norm^2_{S^T_4}
)
+
C
(
\norm \bm \xi_1 \norm^2_{H^3}
+
\norm \bm v_0 \norm^2_{H^6}
).
\end{align}
This estimate together with \eqref{2nd-estimate-with given hat fn} lead to the following estimate
\begin{align}\label{Estimate-reg-time11}
&\norm \vv \norm^2_{W^{3,\infty}(L^2(\OF))}
+
\norm \vv \norm^2_{W^{2,\infty}(H^1(\OF))}
+
\norm 
\vv \norm^2_{H^3(H^1(\OS))}
+
\norm 
\xxi \norm^2_{W^{3,\infty}(H^1(\OS))}
+
\norm 
\xxi \norm^2_{W^{4,\infty}(L^2(\OS))}
\nonumber \\
&\leq
CT^\kappa M 
(
\norm \vv \norm^2_{F^T_4}
+
\norm \xxi \norm^2_{S^T_4}
)
+
C
(
\norm \bm \xi_1 \norm^2_{H^3}
+
\norm \bm v_0 \norm^2_{H^6}
).
\end{align}
\subsection*{Spatial Regularity}
$\bullet$ \textbf{Step 1:} Estimates on $\bm {\tilde v}$ in $W^{2,\infty}(H^2(\OF))$ and $\bm {\tilde \xi}$ in $W^{2,\infty}(H^2(\OS))$.
\\
The fluid velocity $\bm {\tilde v}$ satisfies the elliptic equation
\begin{align}\label{Elliptic-fluid}
\mu \bm \nabla \cdot \big(\nv+(\nv)^t)=\rho_f
\tdet(\bm \nabla \tca) \partial_t \bm \vv + F_{\tilde {v}} \qquad & \textrm{in} \quad \OF,
\end{align}
with
\begin{align*}
F_{\tilde {v}} = - \mu \nabla \cdot
\bm {f_{\vv}},
\end{align*}
where 
\begin{align*}
\bm {f_{\vv}}=&
\Big(\nv \big( \inc-\Id \big) + \big(\itnc -\Id \big) \nv)^t \Big) \cch 
- \big( \nv +(\nv)^t \big ) \big(\cch -\Id \big). 
\end{align*}
as defined in Subsection \ref{Estimates1-spatial}.
First we have 
\begin{align*}
\norm 
\partial^2_t 
\big(
\tdet(\bm \nabla \tca) \partial_t \bm {\tilde v}
\big)
 \norm_{L^\infty(L^2)}
\leq
CM^2 \norm \vv \norm_{W^{3,\infty}(L^2)}
\leq 
C\norm \bm v_0 \norm_{H^6}
+
C \norm \bm \xi_1 \norm_{H^3}
+
CT^\kappa M 
(\norm \vv \norm_{F^T_4}
+
\norm \xxi \norm_{S^T_4})
.
\end{align*}
First, let us estimate $F_{\tilde v}$ in $W^{2,\infty}(L^2)$. In fact differentiating $f_{\vv}$ two times in time gives 
\begin{align*}
&\Big[
\big( \partial_t^2 \bm \nabla \vv \big)
\big( \inc - \Id \big)
+2(\partial_t \bm \nabla \vv )
\big( \partial_t \inc \big)
+
(\bm \nabla \vv) 
\big( \partial^2_t \inc \big)
\Big] \cof(\bm \nabla \tca)
\\
&
+
\Big
(\partial_t \bm \nabla \vv )
\big(\inc - \Id  \big)
+
(\bm \nabla \vv) 
\big( \partial_t \inc  \big)
\Big] \big( \partial_t \cof(\bm \nabla \tca) \big)
+
(\bm \nabla \vv )
\big(\inc - \Id  \big)
\big( \partial^2_t \cof(\bm \nabla \tca) \big).
\end{align*}
Using \eqref{BOUNDS-higher-der} with the embedding of $H^2$ in $L^\infty$ and taking into consideration
\begin{align*}
\norm \vv \norm_{L^\infty(H^1)}
\leq C \norm \bm v_0 \norm_{H^6}
+
T
\norm \vv \norm_{H^3(H^1)}
\end{align*}
yield
\begin{align}
\norm F_{\tilde{v}} \norm_{W^{2,\infty}(L^2)}
\leq
CT^\kappa M
\norm \vv \norm_{W^{2,\infty}(H^1)}
+
C \norm \bm v_0 \norm_{H^6}. 
\end{align}
Therefore, using \eqref{Estimate-reg-time11} and the elliptic estimates on $\vv$ we obtain
\begin{align}\label{Spatial-estimate1-fluid}
\norm \vv \norm_{W^{2,\infty}(H^2)}
\leq
C\norm \bm v_0 \norm_{H^6}
+
C \norm \bm \xi_1 \norm_{H^3}
+
CT^\kappa M 
(\norm \vv \norm_{F^T_4}
+
\norm \xxi \norm_{S^T_4}).
\end{align}
\par 
\indent
On the other hand, the structure displacement $\bm {\tilde \xi}$ satisfies \eqref{elliptic-structure-disp}. Differentiating two times in time yield
\begin{align*}
-
\partial_t^2
\Big[\bm \nabla \cdot \Big(2 \mu_s \bm \epsilon(\bm \xxi)+ \lambda_s(\nabla \cdot \bm \xxi)\Id \Big)
\Big]
=
-\rho_s 
\partial_t^2 
\Big(\tdet(\bm \nabla \tp)\partial^2_t \xxi \Big)
+
\partial_t^2
\bm {H^c_{\xxi}} 
+
\partial_t^2
\bm {H^d_{\xxi}},
\end{align*}
with
\begin{align*}
H^c_{\xit,i} =
 \sum_{\al,j,\be=1}^3 \big( \tciajb^l + \tciajb^q \big) \partial^2_{\al \be} \xit_j , \quad \textup{for} \ i=1,2,3,
\end{align*}
and
\begin{align*}
H^d_{\xit,i}
=
\mathsf{C}\sum_{\al,j,\be=1}^3 \big( \tdiajb^L + \tdiajb^Q+
\tdiajb^T+\tdiajb^F \big) \partial^2_{\al \be} \xit_j , \quad \textup{for} \ i=1,2,3.
\end{align*}
First, we have
\begin{align*}
\norm 
\tdet(\bm \nabla \tp)
\partial_t^2 
\xxi
\norm_{W^{2,\infty(L^2(\OS))}}
\leq
C \norm \bm {\tilde \xi}
\norm_{W^{4,\infty}(L^2)}.
\end{align*}
Then using \eqref{Estimate-reg-time11} we get
\begin{align}\label{Structure-det-step1}
\norm 
\tdet(\bm \nabla \tp)
\partial_t^2 
\bm {\tilde \xi}
\norm_{W^{2,\infty(L^2(\OS))}}
\leq
C\norm \bm v_0 \norm_{H^6}
+
C \norm \bm \xi_1 \norm_{H^3}
+
CT^\kappa M 
(\norm \vv \norm_{F^T_4}
+
\norm \xxi \norm_{S^T_4}).
\end{align}
Further, for $\partial_t^2
\bm {H^c_{\xxi}}$ we have
\begin{align}
\partial_t^2 \bm {H^c_{\xxi}}(\bm {\tilde x},t)
=
\partial_t^2 \bm {H^c_{\xxi}}(\bm {\tilde x},0)
+
\displaystyle
\int_0^t
\partial_s^3
\bm {H^c_{\xxi}} (\bm {\tilde x},s) \ ds \qquad \forall \ \bm {\tilde x} \in \OS.
\end{align}
Simple calculation of $\partial_t^2 \bm {H^c_{\xxi}} (\bm {\tilde{x}},s)$ then setting $t=0$ and using the fact that $\partial_t \tciajb^l(\bm {\tilde x},0)$ is a function of $\bm \xi_1$ give \\
$\norm
\partial_t^3
\bm {H^c_{\xxi}} (\bm {\tilde x},s)\norm_{L^\infty(L^2(\OS))} \leq C \norm \bm \xi_1 \norm_{H^3}$.
Moreover,
\begin{align*}
\displaystyle
\int_0^t
\partial_s^3
\bm {H^c_{\xxi}} (\bm {\tilde x},s) \ ds
= &
\sum_{i,\al,j,\be=1}^3
\Bigg[
\int_0^t
\partial^3_s
\big( \tciajb^l + \tciajb^q \big) \partial^2_{\al \be} \xit_j \ ds
+
3
\int_0^t
\partial_s
\big( \tciajb^l + \tciajb^q \big) \partial^2_{\al \be} (\partial^2_s \xit_j) \ ds
\\
&
+
3
\int_0^t
\partial^2_s
\big( \tciajb^l + \tciajb^q \big) \partial^2_{\al \be} (\partial_s \xit_j) \ ds
+
\int_0^t
\big( \tciajb^l + \tciajb^q \big) \partial^2_{\al \be} (\partial^3_s \xit_j) \ ds
\Bigg].
\end{align*}
Hence, integrating over $\OS$ and using \eqref{lq-boundedness}, we get that the first three terms of the right hand side can be estimated in $L^\infty(L^2(\OS))$ by
\begin{align*}
CT^\kappa M \norm \xxi \norm_{S^T_4}.
\end{align*}
On the other hand, integrating by parts in time in the last integral of the right hand side gives
\begin{align}\label{Structure-Hc-ipb-0}
\big( \tciajb^l + \tciajb^q \big) \partial^2_{\al \be} (\partial^2_s \xit_j)(t)
-
\big( \tciajb^l + \tciajb^q \big) \partial^2_{\al \be} (\partial^2_s \xit_j)(0)
-
\displaystyle
\int_0^t
\partial_s
\big( \tciajb^l + \tciajb^q \big) \partial^2_{\al \be} (\partial^2_s \xit_j) \ ds
\end{align}
As $\txi(0)=0$, then 
$\displaystyle \sum_{i,\al,j,\be=1}^3 
\Big[
\big( \tciajb^l + \tciajb^q \big) \partial^2_{\al \be} (\partial^2_s \xit_j)\Big]
(0)=0$.
In addition,
\begin{align*}
\norm \tciajb^l + \tciajb \norm_{L^\infty(L^2)}
\leq
T
\norm 
\partial_t \tciajb^l + \partial_t \tciajb^q \norm_{L^\infty(L^2)}
\leq CT^\kappa M.
\end{align*}
Therefore,
\begin{align*}
\eqref{Structure-Hc-ipb-0}
\leq
CT^\kappa M \norm \xxi \norm_{S^T_4}.
\end{align*}
Consequently,
\begin{align}\label{Structure-Hc-ipb-1}
\norm 
\partial_t^2 \bm {H^c_{\xxi}}
\norm_{L^\infty(L^2(\OS))}
\leq
CT^\kappa M \norm \xxi \norm_{S^T_4}.
\end{align}
Similarly, one can show that
\begin{align}\label{Structure-Hd-ipb}
\norm 
\partial_t^2 \bm {H^d_{\xxi}}
\norm_{L^\infty(L^2(\OS))}
\leq 
CT^\kappa M \norm \xxi \norm_{S^T_4}.
\end{align}
As a result, combining \eqref{Structure-det-step1}, \eqref{Structure-Hc-ipb-1} and \eqref{Structure-Hd-ipb} an estimate on $\xxi$ in 
$W^{2,\infty}(L^2(\OS))$ is given by
\begin{align}\label{Spatial-estimate1-structure}
\norm \xxi \norm_{W^{2,\infty}(H^2(\OS))}
\leq
C\norm \bm v_0 \norm_{H^6}
+
C \norm \bm \xi_1 \norm_{H^3}
+
CT^\kappa M 
(\norm \vv \norm_{F^T_4}
+
\norm \xxi \norm_{S^T_4}).
\end{align}
Finally, combining \eqref{Spatial-estimate1-fluid} and \eqref{Spatial-estimate1-structure} we get
\begin{align}\label{Estimate1-reg-time}
\norm \vv \norm_{W^{2,\infty}(H^2(\OF))}
+
\norm \xxi \norm_{W^{2,\infty}(H^2(\OS))}
\leq
C\norm \bm v_0 \norm_{H^6}
+
C \norm \bm \xi_1 \norm_{H^3}
+
CT^\kappa M 
(\norm \vv \norm_{F^T_4}
+
\norm \xxi \norm_{S^T_4}).
\end{align}
$\bullet$ \textbf{Step 2:} Estimates on $\bm {\tilde v}$ in $L^\infty(H^4(\OF))$ and $\bm {\tilde \xi}$ in $L^\infty(H^4(\OS))$.
\\
Again, the fluid velocity satisfies \eqref{Elliptic-fluid}.
We estimate $F_{\tilde{v}}$ in $L^\infty(H^2(\OF))$. First,
\begin{align*}
\norm F_{\tilde{v}} \norm_{L^\infty(H^2(\OF))} 
\leq
\mu
\norm \bm f_{\bm {\tilde{v}}} \norm_{L^\infty(H^3(\OF))}.
\end{align*}
But,
\begin{align*}
\norm \bm f_{\bm {\tilde{v}}} \norm_{L^\infty(H^3)}
\leq&
2 \norm \tv \norm_{L^\infty(H^3)}
\norm  \inc-\Id \norm_{L^\infty(H^3)}
\norm \cch \norm_{L^\infty(H^3)}
+
\norm \cch -\Id \norm_{L^\infty(H^3)}
\norm \tv \norm_{L^\infty(H^3)}
\\
\leq &
CT^\kappa M
\norm \bm {\tilde v} \norm_{L^\infty(H^4)}.
\end{align*}
Further, using Estimate \eqref{Estimate1-reg-time} we have
\begin{align*}
\norm \tdet(\bm \nabla \tca) \partial_t \vv \norm_{L^\infty(H^2)}
\leq
CM \norm \vv \norm_{W^{2,\infty}(H^2)}
\leq
C\norm \bm v_0 \norm_{H^6}
+
C \norm \bm \xi_1 \norm_{H^3}
+
CT^\kappa M
(
\norm \vv \norm_{F^T_4}
+
\norm \xxi \norm_{S^T_4}
).
\end{align*}
Hence, the elliptic estimates yield
\begin{align}\label{Spatial-estimate2-fluid}
\norm \vv \norm_{L^\infty(H^4)}
\leq
C\norm \bm v_0 \norm_{H^6}
+
C \norm \bm \xi_1 \norm_{H^3}
+
CT^\kappa M
(
\norm \vv \norm_{F^T_4}
+
\norm \xxi \norm_{S^T_4}
).
\end{align}
Besides, the structure displacement $\xxi$ satisfies \eqref{elliptic-structure-disp}.
Then, by using the fact that $\bm \xi_0=0$ with \eqref{lq-boundedness} we have
\begin{align*}
\norm \tciajb^l + \tciajb \norm_{L^\infty(H^2(\OS))}
\leq
T
\norm 
\partial_t \tciajb^l + \partial_t \tciajb^q \norm_{L^\infty(H^2(\OS))}
\leq CT^\kappa M.
\end{align*}
Thus, $\bm H_{\xxi}^c$ can be estimated by 
\begin{align}\label{reg-Hc}
C \norm \partial_t^2 \xxi \norm_{L^\infty(H^2)} 
+ 
CT^\kappa M 
\norm \xxi \norm_{L^\infty(H^4)}.
\end{align}
By a similar argument, we find that $\bm H^d_{\xxi}$ can be estimated by
\begin{align*}
CT^\kappa M \norm \xxi \norm_{L^\infty(H^4)}.
\end{align*}
Thanks to the Estimate \eqref{Estimate1-reg-time} on $\xxi$, \eqref{reg-Hc} can be estimated by
\begin{align}\label{reg-Hc1}
C\norm \bm v_0 \norm_{H^6}
+
C \norm \bm \xi_1 \norm_{H^3}
+
CT^\kappa M ( \norm \vv \norm_{F^T_4}
+ \norm \xxi \norm_{S^T_4}).
\end{align}
Therefore, using the elliptic estimate we get
\begin{align}\label{Spatial-estimate2-structure}
\norm \xxi \norm_{L^\infty(H^4)}
\leq
C\norm \bm v_0 \norm_{H^6}
+
C \norm \bm \xi_1 \norm_{H^3}
+
CT^\kappa M ( \norm \vv \norm_{F^T_4}
+ \norm \xxi \norm_{S^T_4}).
\end{align}
Combining \eqref{Spatial-estimate2-fluid} and \eqref{Spatial-estimate2-structure} yield
\begin{align}\label{Estimate2-reg-time}
\norm \vv \norm_{L^\infty(H^4(\OF))}
+
\norm \xxi \norm_{L^\infty(H^4(\OS))}
\leq
C\norm \bm v_0 \norm_{H^6}
+
C \norm \bm \xi_1 \norm_{H^3}
+
CT^\kappa M 
(\norm \vv \norm_{F^T_4}
+
\norm \xxi \norm_{S^T_4}).
\end{align}
Finally, Estimates \eqref{Estimate-reg-time11}, \eqref{Estimate1-reg-time} and \eqref{Estimate2-reg-time} give
\begin{align}
\norm \vv \norm_{F^T_4}
+
\norm \xxi  \norm_{S^T_4}
\leq
C\norm \bm v_0 \norm_{H^6}
+
C \norm \bm \xi_1 \norm_{H^3}
+
CT^\kappa M
(
\norm \vv \norm_{F^T_4}
+
\norm \xxi  \norm_{S^T_4}
)
.
\end{align}
Assuming that $T$ small with respect to $M$ and the initial values yield
\begin{align}
\norm \vv \norm_{F^T_4}
+
\norm \xxi  \norm_{S^T_4}
\leq
C\norm \bm v_0 \norm_{H^6}
+
C \norm \bm \xi_1 \norm_{H^3}
=
\bm C_0.
\end{align}


\section{Existence of Solution of the Non-Linear Coupled Problem}
\label{Existence for nonlinear}
\indent
From Proposition \ref{Ex-Uni-Linear}, there exists $\bm {\hat{C}}_0>0$ and $\hat{\kappa}>0$ such that for all $M>0 \ \textrm{and} \ (\bm {\breve v},\bm {\breve \xi}) \in A^T_M$, there exists $T_1>0$ so that the solution of \eqref{Linear-System} satisfies
\begin{align}
\norm \bm \vv \norm^2_{F^T_4} + \norm \bm \xxi \norm^2_{S^T_4} \leq 
\bm {\hat{C}}_0,
\end{align}
for all $T \leq T_1$.\\
Taking $\hat{M}=\bm {\hat {C}}_0$ 
we get
\begin{align}\label{estimate with M}
\norm \bm \vv \norm^2_{F^T_4} + \norm \bm \xxi \norm^2_{S^T_4} \leq \hat{M}.
\end{align}

We seek to prove the existence of a solution of the non-linear coupled problem  \eqref{eqcoup1}-\eqref{eqcoup102}. To establish this result we use the fixed point theorem.
For this sake, 
for any $T \leq \hat{T}$, we setting $E=F^T_2 \times S^T_2$ and $W=A^T_{\hat{M}}$. The set $W$ is a closed subset of $E$. We define the function $\Psi : (\bm {\breve v},\bm {\breve \xi}) \longrightarrow (\bm \vv,\bm \xxi)$ that maps $(\bm {\breve v}, \txi) \in W $ into $(\bm \vv,\bm \xxi) \in W$  which is the solution of the linear system \eqref{Linear-System}.
An element $(a,b) \in \Psi(W)$ is written as $(a,b)=\Psi(\bm {\breve v},\bm {\breve \xi})$ where $(\bm {\breve v},\bm {\breve \xi})$ belongs to $W$. But the definition of $\Psi$ gives that $\Psi(\bm {\breve v},\bm {\breve \xi})=
(\vv,\xxi)$ which is the unique solution of the linear problem \eqref{Linear-System} in $W$, consequently $(a,b)=(\vv,\xxi) \in W$. Therefore, $\Psi(W) \subset W$.

Consider two pairs $(\bm {\breve v}_1,\bm {\breve \xi}_1)$ and $(\bm {\breve v}_2,\bm {\breve \xi}_2) \in W$ and two solutions $(\vv_1,\xxi_1),(\vv_2,\xxi_2)$ of the linear system \eqref{Linear-System} associated to $(\bm {\breve v}_1,\bm {\breve \xi}_1)$ and $(\bm {\breve v}_2,\bm {\breve \xi}_2)$, respectively. Therefore $\bm \vv_1,\bm \vv_2,\bm \xxi_1$ and $\bm \xxi_2$ satisfy the variational formulations \eqref{final-eqn-weak} and \eqref{differentiated weak form} with
\[
g_i =-\sum\limits_{\al,j,\be=1}^3 \bigg(\displaystyle \int_0^t \partial_s \tbiajb \partial_\be \xit_j \ ds \bigg) \tilde{n}_{\al}, \quad i=1,2,3.
\]
Set $\bm {\tilde \zeta} = \bm {\tilde \gamma}_1 -\bm {\tilde \gamma}_2$, then $\bm {\tilde \zeta}(0)=0$ . The main work in this section is to find estimates on $\bm {\tilde \zeta}$ and $\partial_t \bm {\tilde \zeta}$. These estimates will enable us to apply the fixed point theorem for a suitable choice of $T$ to be precised later.


\subsection{Estimates on $ \tilde{\zeta} $}
\label{1st-nonlinear-Estimate}
Consider $\bm {\tilde \zeta} = \bm {\tilde \gamma}_1 -\bm {\tilde \gamma}_2$ in \eqref{final-eqn-weak-1} then $\bm {\tilde \zeta}$ satisfies the following variational formulation
\begin{equation}\label{Vf-zeta}
\begin{cases}
\rho_f \displaystyle \int_{\OF} \tdet(\bm \nabla \tca_1) \partial_t \bm {\tilde \zeta} \cdot \bm {\tilde \eta} \ d\bm {\tilde x}

+
\displaystyle \int_{\OF} \bm {\breve \sigma}^0_1(\bm {\tilde \zeta}) :\bm \nabla \bm {\tilde \eta} \ d\bm {\tilde x}

+\rho_s \displaystyle \int_{\OS} \tdet(\bm \nabla \tp_1) \partial_t \bm {\tilde \zeta} \cdot \bm {\tilde \eta} \ d\bm {\tilde x}
\vspace{1mm} \\
+
\rho_f 
\mathlarger \int_{\OF}
\partial_t \bm {\tilde \gamma}_2 
\cdot
\big[ \tdet(\nc_1) - \tdet(\nc_2) \big] \bm {\tilde \eta} \ d\bm {\tilde x}

\vspace{1mm} \\
+
\rho_s \mathlarger \int_{\OS}
\partial_t \bm {\tilde \gamma}_2 
\cdot
\big[
\tdet(\bm \nabla \tp_1)-\tdet(\bm \nabla \tp_2)
\big] \bm {\tilde \eta} \ d\bm {\tilde x}

\vspace{1mm} \\

+\displaystyle \sum_{i,\al,j,\be=1}^3 \displaystyle \int_{\OS} \biajb (\bm \nabla \bm {\breve \xi}_1) 
\partial_\be (\int_0^t \tilde{\zeta}(s) ds)_j \ \partial_\al \tilde {\eta}_i \ d\bm {\tilde x}

\vspace{1mm} \\

+\displaystyle \sum_{i,\al,j,\be=1}^3 \displaystyle \int_{\OS} \partial_\al \biajb(\bm \nabla \bm {\breve \xi}_1) \partial_\be(\int_0^t \tilde{\zeta}(s) ds)_j \ \tilde {\eta}_i \ d\bm {\tilde x}

\vspace{1mm} \\

+\displaystyle \sum_{i,\al,j,\be=1}^3 \displaystyle \int_{\OS}
\Big(  \biajb (\bm \nabla \bm {\breve \xi}_1) - \biajb( \bm \nabla \bm {\breve \xi}_2) \Big) \partial_\be (\int_0^t \tilde{\gamma}_2(s) ds)_j \ \partial_\al \tilde {\eta}_i \ d\bm {\tilde x} 

\vspace{1mm} \\

+\displaystyle \sum_{i,\al,j,\be=1}^3 \displaystyle \int_{\OS}
\Big( \partial_\al \biajb (\bm \nabla \bm {\breve \xi}_1) - \partial_\al \biajb(\bm \nabla \bm {\breve \xi}_2) \Big) \partial_\be (\int_0^t \tilde{\gamma}_2(s) ds)_j \tilde {\eta}_i \ d\bm {\tilde x}

\vspace*{1mm} \\
+\displaystyle \int_{\OF} \bm F_0 : \bm \nabla \bm {\tilde \eta} \ d\bm {\tilde x}

=
\displaystyle \int_{\Gamma_c(0)} \bm G \cdot \bm {\tilde \eta} \ d\tilde{\Gamma}

\qquad

\qquad
 \forall \ \bm {\tilde \eta} \in \widetilde{\mathcal{W}},


%


\end{cases}
\end{equation}
where
\begin{align}\label{sigma-in-gamma}
\bm {\breve \sigma}^0_1(\bm {\tilde \zeta})
&=
\mu \big[ \bm \nabla \bm {\tilde \zeta} (\nc_1)^{-1} + (\nc_1)^{-t} (\nze)^{t} \big] \cof(\nc_1)
\end{align}
and
\begin{align}\label{F_0}
\bm F_0
&=
\mu \big[ \nga_2 \big( (\nc_1)^{-1} \cof(\nc_1) - (\nc_2)^{-1} \cof(\nc_2) \big) \big]
\nonumber \\
&
+
\mu
\big [ (\nc_1)^{-t} (\nga_2)^t \cof(\nc_1) - (\nc_2)^{-t} (\nga_2)^t \cof(\nc_2) \big].
\end{align}
Further, for $i=1,2,3$,
\begin{align}
G_i=
\displaystyle \sum_{\al,j,\be=1}^3
\Bigg(
\displaystyle \int_0^t
\partial_s \biajb(\bm \nabla \bm {\breve \xi}_1) \partial_\be (\mathsmaller \int_0^s \tilde{\zeta}(\tau) d\tau)_j \ ds
\Bigg) \tilde{n}_{\al}
%
+
\displaystyle \sum_{\al,j,\be=1}^3
\Bigg(
\displaystyle \int_0^t
\Big[
\partial_s \biajb(\bm \nabla \bm {\breve \xi}_1) - \partial_s \biajb(\bm \nabla \bm {\breve \xi}_2)
\Big] \partial_\be (\mathsmaller \int_0^s  \tilde{\gamma}_2(\tau) d\tau)_j \ ds
\Bigg) \tilde{n}_{\al}.
\end{align}
Moreover, for simplicity, in what follows we set
\begin{align}\label{L0-L1}
\bm L_0 = \rho_f 
\partial_t \bm {\tilde \gamma}_2 \big[ \tdet(\bm \nabla \tca_1) -  \tdet(\bm \nabla \tca_2) \big] 
\quad
\textup{and}
\quad
\bm L_1=
\rho_s
\partial_t \bm {\tilde \gamma}_2 
\big[
\tdet(\bm \nabla \tp_1)-\tdet(\bm \nabla \tp_2)
\big].
\end{align}
Taking $\bm {\tilde \eta} =\bm {\tilde \zeta}$ and using the fact that $\bm {\tilde \zeta}(0)=0$, then proceeding as in \eqref{Galerkin-2} yield
\begin{equation}\label{Nonlinear-vf}
\begin{cases}
\dfrac{\rho_f}{2} \displaystyle \int_{\OF} \tdet(\bm \nabla \tca_1) \lvert \bm \tilde{\zeta}(t) \lvert^2 \ d\bm {\tilde x}
-\dfrac{\rho_f}{2}
\mathlarger \int_0^t 
\mathlarger \int_{\OF}
\partial_s \tdet(\bm \nabla \tca_1) \lvert \bm {\tilde \zeta} \lvert^2 \ d\bm {\tilde x} \ ds
\vspace{1mm} \\
+ \displaystyle \int_0^t
\displaystyle \int_{\OF} \bm {\breve \sigma}_1(\bm {\tilde \zeta}) :\nze \ d\bm {\tilde x} \ ds
+\dfrac{\rho_s}{2}
\displaystyle \int_{\OS} \tdet(\bm \nabla \tp_1) \lvert \bm {\tilde \zeta}(t) \lvert^2 \ d\bm {\tilde x}
\vspace{1mm} \\
-\dfrac{\rho_s}{2}
\mathlarger \int_0^t
\mathlarger \int_{\OS}
\partial_s \tdet(\bm \nabla \tp_1) \lvert \bm {\tilde \zeta} \lvert^2 \ d\bm {\tilde x} \ ds
+
\displaystyle \int_0^t
\displaystyle \int_{\OF} F_0 : \nze \ d\bm {\tilde x} \ ds 

\vspace{1mm} \\

+
\mathlarger \int_0^t  
\mathlarger \int_{\OF}
\bm L_0 \cdot \bm {\tilde \eta} \ d\bm {\tilde x} \ ds

+

\mathlarger \int_0^t 
\mathlarger \int_{\OS}
\bm L_1 \cdot \bm {\tilde \eta} \ d\bm {\tilde x} \ ds

\vspace{1mm} \\
+
\dfrac{1}{2}
\displaystyle \sum_{i,\al,j,\be=1}^3 \displaystyle \int_{\OS}
\biajb (\bm \nabla \bm {\breve \xi}_1) \partial_\be (\mathsmaller \int_0^t \tilde{\zeta}(s) ds)_j \ \partial_\al (\mathsmaller \int_0^t \tilde{\zeta}(s) ds)_i
\ d\bm {\tilde x}

\vspace{1mm} \\
-
\dfrac{1}{2}
\displaystyle \sum_{i,\al,j,\be=1}^3
\displaystyle \int_0^t
\displaystyle \int_{\OS}
\partial_s
\biajb (\bm \nabla \bm {\breve \xi}_1) \partial_\be (\mathsmaller \int_0^s \tilde{\zeta}(\tau) d\tau)_j \ \partial_\al (\mathsmaller \int_0^s \tilde{\zeta}(\tau) d\tau)_i
\ d\bm {\tilde x} \ ds
\vspace{1mm} \\

+
\displaystyle \sum_{i,\al,j,\be=1}^3
\displaystyle \int_0^t
\displaystyle \int_{\OS} \partial_\al \biajb(\bm \nabla \bm {\breve \xi}_1) \partial_\be (\mathsmaller \int_0^s \tilde{\zeta}(\tau) d\tau)_j \ \partial_s (\mathsmaller \int_0^s \tilde{\zeta}(\tau) d\tau)_i \ d\bm {\tilde x} \ ds

\vspace{1mm} \\

+
\displaystyle \sum_{i,\al,j,\be=1}^3
\displaystyle \int_0^t
\displaystyle \int_{\OS}
\Big(  \biajb (\bm \nabla \bm {\breve \xi}_1) - \biajb( \bm \nabla \bm {\breve \xi}_2) \Big) \partial_\be (\mathsmaller \int_0^s \gamma_2(\tau) d\tau)_j \ \partial^2_{s \al} (\mathsmaller \int_0^s \tilde{\zeta}(\tau) d\tau)_i \ d\bm {\tilde x} \ ds 
\vspace{1mm} \\

+
\displaystyle \sum_{i,\al,j,\be=1}^3
\displaystyle \int_0^t
\displaystyle \int_{\OS}
\Big( \partial_\al \biajb (\bm \nabla \bm {\breve \xi}_1) - \partial_\al \biajb(\bm \nabla \bm {\breve \xi}_2) \Big) \partial_\be (\mathsmaller \int_0^s \tilde{\gamma}_2(\tau) d\tau)_j \partial_s (\mathsmaller \int_0^s \tilde{\zeta}(\tau) d\tau)_i \ d\bm {\tilde x} \ ds

 \vspace{1mm} \\
=
\displaystyle \int_0^t
\displaystyle \int_{\Gamma_c(0)} \bm G \cdot \partial_s(\mathsmaller \int_0^s \bm {\tilde \zeta}(\tau) d\tau) \ d\tilde{\Gamma} \ ds
.

\qquad
\qquad



\end{cases}
\end{equation}
%
%
%
%
%
We proceed to estimate the terms of 
\eqref{Nonlinear-vf} in the spirit of \cite{Boulakia2} by using the fact that 
\begin{equation}\label{Est-det-cof}
\begin{split}
\norm
\cof(\nc_1) - \cof(\nc_2)
\norm_{L^\infty(H^1)}
\leq
C \norm \bm {\breve v}_1 - \bm {\breve v}_2 \norm_{F^T_2},
\\
\norm (\nc_1)^{-1} - (\nc_2)^{-1}
\norm_{L^\infty(H^1)} 
\leq
C \norm \bm {\breve v}_1 - \bm {\breve v}_2 \norm_{F^T_2},
\\
\norm 
\tdet(\nc_1)-\tdet(\nc_2)
\norm_{L^\infty(H^1)} 
\leq
C \norm \bm {\breve v}_1 - \bm {\breve v}_2 \norm_{F^T_2}
\\
\textup{and} \hspace{5cm}
\\
\norm
\tdet(\bm \nabla \tp_1)-\tdet(\bm \nabla \tp_2)
\norm_{L^\infty(H^1)} \leq
C 
\norm \txi_1 - \txi_2
\norm_{S^T_2}
\end{split}
\end{equation}
which can be established in the similar manner used in Lemma \ref{Estimates on Flow}.
\\
First, using Lemma \ref{Estimate-structure-defo} we have
\begin{align*}
\dfrac{\rho_s}{2}
\mathlarger \int_{\OS}
\tdet(\bm \nabla \tp_1) \lvert \bm {\tilde \zeta}(t) \lvert^2 \ d\bm {\tilde x}
-
\dfrac{\rho_s}{2}
\mathlarger \int_0^t
\mathlarger \int_{\OS}
\partial_t \tdet(\bm \nabla \tp_1)
\lvert \bm {\tilde \zeta} \lvert^2 \ d\bm {\tilde x} \ ds
\\
\geq
\rho_s(1-CT^\kappa M) \norm \bm {\tilde \zeta} \norm^2_{L^\infty(L^2(\OS))}.
\end{align*}
Using \eqref{lq-boundedness} and \eqref{lq1-boundedness}, for all $i,\al,j,\be \in \{1,2,3 \}$ we have 
\begin{equation}\label{eq-xi12}
\begin{split}
&\norm \biajb(\bm \nabla \bm {\breve \xi}_1)- \biajb(\bm \nabla \bm {\breve \xi}_2) \norm_{L^\infty(H^1)} \leq C \norm \bm {\breve \xi}_1-  \bm {\breve \xi}_2 \norm_{L^\infty(H^2)},
\\
&\norm \partial_\al \biajb(\bm \nabla \bm {\breve \xi}_1)- \partial_\al \biajb(\bm \nabla \bm {\breve \xi}_2) \norm_{L^\infty(L^2)} \leq C \norm \bm {\breve \xi}_1-  \bm {\breve \xi}_2 \norm_{L^\infty(H^2)}
\\
& \hspace{4cm}\textup{and}
\\
&\norm \partial_t \biajb(\bm \nabla \bm {\breve \xi}_1)- \partial_t \biajb(\bm \nabla \bm {\breve \xi}_2) \norm_{L^\infty(L^2)} \leq C \norm \bm {\breve \xi}_1-  \bm {\breve \xi}_2 \norm_{W^{1,\infty}(H^1)}.
\end{split}
\end{equation}
Then an estimate on $\bm G$ is given by
\begin{align}
\norm\bm  G \norm_{H^1(L^2(\Gamma_c(0)))} \leq
CT^\kappa M
\Big(
\norm \bm {\breve \xi}_1 - \bm {\breve \xi}_2 \norm_{S^T_2} + \norm \bm {\tilde \zeta} \norm_{L^\infty(H^1(\OS))}
\Big).
\end{align}
Hence, proceeding similarly as in \eqref{g-estimate} we get 
\begin{align}\label{estimate-G-int}
\displaystyle \int_0^t
\displaystyle \int_{\Gamma_c(0)} \bm  G \cdot 
\partial_s(\mathsmaller \int_0^s \bm {\tilde \zeta}(\tau) d\tau) \ d\tilde{\Gamma} \ ds
& \leq
CT^\kappa M \norm \bm {\breve \xi}_1 - \bm {\breve \xi}_2 \norm^2_{S^T_2}
+
\delta CT^\kappa M \norm \bm {\tilde \zeta} \norm^2_{S^T_2}.
\end{align}
Taking into consideration \eqref{eq-xi12} and the embedding $H^1 \subset L^6$ \cite[Theorem 9.9]{Brezis} we obtain

\begin{align*}
&\Bigg \lvert
-
\displaystyle \sum_{i,\al,j,\be=1}^3
\displaystyle \int_0^t
\displaystyle \int_{\OS}
\Big(  \biajb (\bm \nabla \bm {\breve \xi}_1) - \biajb( \bm \nabla \bm {\breve \xi}_2) \Big) 
\partial_\be (\mathsmaller \int_0^s  \tilde{\gamma}_2(\tau) d\tau)_j 
\ 
\partial^2_{s \al} (\mathsmaller \int_0^s \tilde{\zeta}(\tau) d\tau)_i 
\ d\bm {\tilde x} \ ds \\ 
&-
\displaystyle \sum_{i,\al,j,\be=1}^3
\displaystyle \int_0^t
\displaystyle \int_{\OS}
\Big( \partial_\al \biajb (\bm \nabla \bm {\breve \xi}_1) - \partial_\al \biajb(\bm \nabla \bm {\breve \xi}_2) \Big) 
\partial_\be (\mathsmaller \int_0^s  \tilde{\gamma}_2(\tau) d\tau)_j 
\partial_s (\mathsmaller \int_0^s \tilde{\zeta}(\tau) d\tau)_i \ d\bm {\tilde x} \ ds
\Bigg \lvert
\\
&\leq
CTM \norm \bm {\breve \xi}_1 - \bm {\breve \xi}_2 \norm^2_{L^\infty(H^2(\OS))}
+
CTM \norm \bm  {\tilde \zeta} \norm^2_{L^\infty(H^1(\OS))}.
\end{align*}
On the contrary, using \eqref{coercivity} and \eqref{coercivity1} we have
\begin{equation}
\begin{split}
&\dfrac{1}{2}
\displaystyle \sum_{i,\al,j,\be=1}^3 
\displaystyle \int_{\OS}
\biajb (\bm \nabla \bm {\breve \xi}_1)(t) 
\partial_\be (\mathsmaller \int_0^t \tilde{\zeta}(s) ds)_j
\partial_\al (\mathsmaller \int_0^t \tilde{\zeta}(s) ds)_i
d\bm {\tilde x}
\\
&\geq
\mu_s \norm \mathsmaller \int_0^{\bigcdot} \bm {\tilde \zeta}(s) ds \norm^2_{L^\infty(H^1(\OS))}
+
\dfrac{\lambda_s+\mathsf{C}}{2}
\norm \nabla \cdot \mathsmaller \int_0^{\bigcdot} \bm {\tilde \zeta}(s) \ ds \norm^2_{L^\infty(L^2(\OS))}
%
%
%
%
-CT^\kappa M \norm \mathsmaller \int_0^{\bigcdot} \bm {\tilde \zeta}(s) \ ds \norm^2_{L^\infty(H^1(\OS))}.
\end{split}
\end{equation}
Whereas, for the integrals on the fluid domain $\OF$ we have 
\begin{align*}
\dfrac{\rho_f}{2}
\mathlarger \int_{\OF}
&
\tdet(\bm \nabla \tca_1) \lvert \bm {\tilde \zeta}(t) \lvert^2 \ d\bm {\tilde x}
+
\dfrac{\rho_f}{2}
\mathlarger \int_0^t
\mathlarger \int_{\OF}
\partial_t \tdet(\bm \nabla \tca_1)
\lvert \bm {\tilde \zeta} \lvert^2 \ d\bm {\tilde x} \ ds
\geq
\dfrac{\rho_f}{2}
(1-CT^\kappa M) \norm \bm {\tilde \zeta} \norm^2_{L^\infty(L^2(\OF))}.
\end{align*}
On the other hand, for $\bm F_0$ we have
\begin{align}\label{Estimate-fluid-f0}
\norm \bm F_0 \norm^2_{L^2(L^2(\OF))}
\leq
CT \norm  \bm {\breve v}_1 - \bm {\breve v}_2 \norm^2_{F^T_2}.
\end{align}
Then, using Young's inequality we bound the integral $\displaystyle \int_0^t \displaystyle \int_{\OF} \bm F_0 : \bm \nabla \bm {\tilde \zeta} \ d\bm {\tilde x} \ ds$ as
\begin{equation}\label{Estimate-integral-F0-1}
\begin{split}
\Bigg \lvert 
\mathlarger \int_0^t 
\displaystyle \int_{\OF}  
\bm F_0 : \bm \nabla \bm {\tilde \zeta} \ d\bm {\tilde x} \ ds \Bigg \lvert
\leq
\mathlarger \int_0^t 
\displaystyle \int_{\OF} 
\lvert \bm F_0 \lvert \lvert \nze \lvert \ d\bm {\tilde x} \ ds
\leq
C_\delta CT \norm  \bm {\breve v}_1 - \bm {\breve v}_2 \norm^2_{F^T_2}+ 
\delta \lvert \bm {\tilde \zeta} \norm^2_{L^2(H^1(\OF))}.
\end{split}
\end{equation}
In order to deal with the integral in $\bm L_0$ we use \eqref{Est-det-cof} and Young's inequality to get
\begin{equation}\label{int1-det-fluid}
\begin{split}
\Bigg \lvert
\mathlarger \int_0^t
\mathlarger \int_{\OF}
\partial_t \gamma_2
\big[ \tdet(\bm \nabla \tca_1) - \tdet(\bm \nabla \tca_2) \big]
\bm {\tilde \zeta} \ d\bm {\tilde x} \  ds
\Bigg \lvert
%
&
\leq
T
\norm \partial_t \bm {\tilde \gamma}_2 \norm_{L^\infty(L^3(\OF))}
\norm \tdet(\bm \nabla \tca_1) - \tdet(\bm \nabla \tca_2) \norm_{L^\infty(L^6(\OF))}
\norm \bm {\tilde \zeta} \norm_{L^\infty(L^2(\OF))}
\\
& \leq 
CT^\kappa M
\Big[
C_\delta \norm \bm {\breve v}_1 - \bm {\breve v}_2 \norm^2_{F^T_2}
+
\delta \norm \bm {\tilde \zeta} \norm^2_{L^\infty(L^2(\OF))}
 \Big]. 
\end{split}
\end{equation}
Similarly, for the integral in $\bm L_1$ we have
\begin{equation}\label{in1-det-structure1}
\begin{split}
\Bigg \lvert
\mathlarger \int_0^t
\mathlarger \int_{\OS}
\partial_t \gamma_2
\big[ \tdet(\bm \nabla \tp_1) 
- \tdet(\bm \nabla \tp_2) \big]
\bm {\tilde \zeta} \ d\bm {\tilde x} \  ds
\Bigg \lvert
\leq
CT^\kappa M 
\Big[
C_\delta \norm \bm {\breve \xi}_1 - \bm {\breve \xi}_2 \norm^2_{S^T_2}
+
\delta \norm \bm {\tilde \zeta} \norm^2_{W^{1,\infty}(L^2(\OS))}
\Big]
.
\end{split}
\end{equation}
Finally, proceeding in a similar manner as Subsection \ref{A Priori Estimates} with the use of \eqref{Estimate-fluid-f0}-\eqref{in1-det-structure1} and taking into consideration that $T$ is small with respect to $M$ we get

\begin{align}\label{1st-nl-estimate}
\norm \bm {\tilde \zeta} \norm^2_{F^T_1}
+
\norm \mathsmaller \int_0^\bigcdot \bm  {\tilde \zeta}(s) ds \norm^2_{S^T_1}
\leq
CT^\kappa M
\Big[
\norm \bm {\breve \xi}_1 - \bm {\breve \xi}_2 \norm^2_{S^T_2}
+
\norm \bm {\breve v}_1 - \bm {\breve v}_2 \norm^2_{F^T_2}
\Big ].
\end{align}

\subsection{Estimates on $\partial_t \tilde{\zeta}$}
The weak solution $\bm {\tilde \zeta}$ satisfies \eqref{Vf-zeta}. Differentiating \eqref{Vf-zeta} in times gives the following variational formulation
\begin{equation}\label{formulation gamma-zeta;general}
\begin{cases}
\rho_f 
\displaystyle \int_{\OF} 
\tdet(\bm \nabla \bm{\breve{\mathcal{A}}}_{1})
\partial^2_t \bm {\tilde \zeta} \cdot \bm {\tilde \eta} \ d\bm {\tilde x} 
+\rho_f
\mathlarger \int_{\OS}
\partial_t \tdet(\bm \nabla \tca_1)
\partial_t \bm {\tilde \zeta} \cdot \bm {\tilde \eta} \ d\bm {\tilde x}
+
\displaystyle \int_{\OF} \partial_t \bm {\breve \sigma}^0_{1}(\bm {\tilde \zeta}) :\bm \nabla \bm {\tilde \eta} \ d\bm {\tilde x}
\vspace{1mm} \\
+\rho_s \mathlarger \int_{\OS} \tdet(\bm \nabla \tp_1) \partial^2_t \bm {\tilde \zeta} \cdot \bm {\tilde \eta} \ d\bm {\tilde x}
+ \rho_s \mathlarger \int_{\OS} 
\partial_t \tdet(\bm \nabla \tp_1) \partial_t \bm {\tilde \zeta} \cdot \bm {\tilde \eta} \ d\bm {\tilde x}
\vspace{1mm} \\
-\displaystyle \sum_{i,\al,j,\be=1}^3 \displaystyle \int_{\OS} \partial_t \biajb (\bm \nabla \bm {\breve \xi}_1) \partial^2_{\al \be} (\int_0^t \tilde{\zeta}(s) ds)_j \ \tilde {\eta}_i \ d\bm {\tilde x} 
+\displaystyle \sum_{i,\al,j,\be=1}^3
\displaystyle \int_{\OS} 
\partial_\al \biajb (\bm \nabla \bm {\breve \xi}_1) \partial^2_{t \be} (\int_0^t \tilde{\zeta}(s) ds)_j \ \tilde {\eta}_i \ d\bm {\tilde x}
\vspace{1mm} \\
+\displaystyle \sum_{i,\al,j,\be=1}^3
\displaystyle \int_{\OS} \biajb (\bm \nabla \bm {\breve \xi}_1)
\partial^2_{t \be} (\int_0^t \tilde{\zeta}(s) ds)_j \ \partial_\al \tilde {\eta}_i \ d\bm {\tilde x}
+
\displaystyle \int_{\OF} \partial_t \bm F_0 : \bm \nabla \bm {\tilde \eta} \ d\bm {\tilde x} 

\vspace{1mm} \\
+

\mathlarger \int_{\OF}
\partial_t \bm  L_0  \cdot \bm {\tilde \eta} \ d\bm {\tilde x}

+
\mathlarger \int_{\OS}
\partial_t \bm L_1 \cdot \bm {\tilde \eta} \ d\bm {\tilde x}

-\displaystyle \sum_{i,\al,j,\be=1}^3 \displaystyle \int_{\OS}
\Big( \partial_t \biajb (\bm \nabla \bm {\breve \xi}_1)-\partial_t \biajb (\bm \nabla \bm {\breve \xi}_2) \Big) \partial^2_{\al \be} (\int_0^t \tilde{\gamma}_2(s) ds)_j \ \tilde {\eta}_i \ d\bm {\tilde x}

\vspace{1mm} \\
+\displaystyle \sum_{i,\al,j,\be=1}^3 \displaystyle \int_{\OS}
\Big(
 \partial_\al \biajb (\bm \nabla \bm {\breve \xi}_1)-\partial_\al \biajb (\bm \nabla \bm {\breve \xi}_2) \Big)
\partial^2_{t \be} (\int_0^t \tilde{\gamma}_2(s) ds)_j \ \tilde {\eta}_i \ d\bm {\tilde x}
\vspace{1mm} \\
+
\displaystyle \sum_{i,\al,j,\be=1}^3 \displaystyle \int_{\OS}
\Big(
\biajb (\bm \nabla \bm {\breve \xi}_1)-\biajb (\bm \nabla \bm {\breve \xi}_2)
\Big)
\partial^2_{t \be} (\int_0^t \tilde{\gamma}_2(s) ds)_j \ \partial_\al \tilde {\eta}_i \ d\bm {\tilde x}
=0 \qquad \forall \ \bm {\tilde \eta} \in \widetilde{\mathcal{W}},
\end{cases}
\end{equation}
where $\bm {\breve \sigma}^0_{1}(\bm {\tilde \zeta})$, $\bm F_0$, $\bm L_0$ and $\bm L_1$ are defined in \eqref{sigma-in-gamma} and \eqref{F_0}-\eqref{L0-L1}, respectively.
Take $\bm {\tilde \eta} = \partial_t \bm {\tilde \zeta}$ in \eqref{formulation gamma-zeta;general} to get
%
%
\begin{equation}\label{formulation gamma-zeta1}
\begin{cases}
\dfrac{\rho_f}{2} \dfrac{d}{dt} \displaystyle \int_{\OF}
\tdet(\bm \nabla \tca_1) \lvert \partial_t \bm {\tilde \zeta}(t) \lvert^2 \ d\bm {\tilde x}
+
\dfrac{\rho_f}{2}
\mathlarger \int_{\OF}
\partial_t \tdet(\bm \nabla \tca_1)
\lvert \partial_t \bm {\tilde \zeta} \lvert^2 \ d\bm {\tilde x}
\vspace{2mm} \\
+
\dfrac{\rho_s}{2} \dfrac{d}{dt} \displaystyle \int_{\OS}
\tdet(\bm \nabla \tp_1) \lvert \partial_t \bm {\tilde \zeta}(t) \lvert^2 \ d\bm {\tilde x}
+\dfrac{\rho_s}{2} \dfrac{d}{dt} \displaystyle \int_{\OS} \lvert \partial_t \bm {\tilde \zeta}(t) \lvert^2 \ d\bm {\tilde x} 
+
\displaystyle \int_{\OF} \partial_t \bm {\breve \sigma}_{1}(\bm {\tilde \zeta}) : \partial_t \nze \ d\bm {\tilde x}
\vspace{2mm} \\
+\dfrac{1}{2} \dfrac{d}{dt}
\displaystyle \sum_{i,\al,j,\be=1}^3
\displaystyle \int_{\OS} \biajb (\bm \nabla \bm {\breve \xi}_1)
\partial^2_{t \be} (\mathsmaller \int_0^t \tilde{\zeta}(s) ds)_j
\ \partial^2_{t \al} (\mathsmaller \int_0^t \tilde{\zeta}(s) ds)_i \ d\bm {\tilde x} 
\vspace{2mm} \\
-\dfrac{1}{2}
\displaystyle \sum_{i,\al,j,\be=1}^3
\displaystyle \int_{\OS}
\partial_t \biajb (\bm \nabla \bm {\breve \xi}_1) \partial^2_{t \be} (\mathsmaller \int_0^t \tilde{\zeta}(s) ds)_j
\
\partial^2_{t \al} (\mathsmaller \int_0^t \tilde{\zeta}(s) ds)_i \ d\bm {\tilde x}
\vspace{2mm} \\
+\displaystyle \sum_{i,\al,j,\be=1}^3
\displaystyle \int_{\OS}
\partial_\al \biajb (\bm \nabla \bm {\breve \xi}_1) \partial^2_{t \be} (\mathsmaller \int_0^t \tilde{\zeta}(s) ds)_j
\
\partial^2_t (\mathsmaller \int_0^t \tilde{\zeta}(s) ds)_i \ d\bm {\tilde x}
\vspace{2mm} \\
-\displaystyle \sum_{i,\al,j,\be=1}^3 \displaystyle \int_{\OS}
\partial_t \biajb (\bm \nabla \bm {\breve \xi}_1) \partial^2_{\al \be} (\mathsmaller \int_0^t \tilde{\zeta}(s) ds)_j 
\
\partial^2_t (\mathsmaller \int_0^t \tilde{\zeta}(s) ds)_i \ d\bm {\tilde x} \vspace{2mm} \\
=
-\displaystyle \int_{\OF} \partial_t \bm F_0 : \partial_t \nze \ d\bm {\tilde x}

+\displaystyle \int_{\OS}
\partial_t \bm H_0 \cdot \partial^2_t (\mathsmaller \int_0^t \bm {\tilde \zeta}(s) ds) \ d\bm {\tilde x}

-

\mathlarger \int_{\OF}
\partial_t \bm L_0 \cdot \partial_t \bm {\tilde \zeta} \ d\bm {\tilde x}

-
\mathlarger \int_{\OS}
\partial_t \bm L_1 \cdot \partial^2_t (\mathsmaller \int_0^t \bm {\tilde \zeta}(s) ds) \ d\bm {\tilde x}

\vspace*{2mm} \\
-\displaystyle \sum_{i,\al,j,\be=1}^3 \displaystyle \int_{\Gamma_c(0)}
\Big(
\biajb (\bm \nabla \bm {\breve \xi}_1)-\biajb (\bm \nabla \bm {\breve \xi}_2)
\Big)
\partial^2_{t \be} (\mathsmaller \int_0^t \tilde{\gamma}_2(s) ds)_j \ \tilde{n}_{\al}
\
 \partial_t^2 (\mathsmaller \int_0^t \tilde{\zeta}(s) ds)_i \ d\tilde{\Gamma} ,
\end{cases}
\end{equation}
where
\begin{align}\label{H_0}
 H_{0,i}=
\displaystyle \sum_{\al,j,\be=1}^3
\Big(
\biajb (\bm \nabla \bm {\breve \xi}_1)-\biajb (\bm \nabla \bm {\breve \xi}_2)
\Big)
\partial^2_{\al \be} (\mathsmaller \int_0^t  \tilde{\gamma}_2(s) ds)_j \qquad \textup{for} \ i=1,2,3.
\end{align}%
%
\subsubsection*{\large Step 1}
Now we proceed to derive some estimates on 
\[\bm {\tilde \zeta}|_{\OF} \in H^1(H^1)\cap W^{1,\infty}(L^2),
\bm {\tilde \zeta}|_{\OS} \in W^{1,\infty}(L^2) \cap L^{\infty}(H^1)
\quad \textup{and} \quad  
\int_0^t \bm {\tilde \zeta}(s)|_{\OS} ds \in L^\infty(H^1).\]
%
%
First, we have
\begin{equation}\label{estimate-fluid-det}
\begin{split}
\mathlarger \int_{\OF}
\tdet(\bm \nabla \tca_1) \lvert \partial_t \bm {\tilde \zeta}(t) \lvert^2 \ d\bm {\tilde x}
+
\mathlarger \int_0^t 
\mathlarger \int_{\OF}
\partial_t \tdet(\bm \nabla \tca_1) \lvert \partial_t \bm {\tilde \zeta} \lvert^2 \ d\bm {\tilde x} \ ds
%
%
\geq
(1-CT^\kappa M) \norm \bm {\tilde \zeta} \norm^2_{W^{1,\infty}(L^2(\OF))}
-CT^\kappa M \norm \bm {\tilde \zeta} \norm^2_{F^T_2}.
\end{split}
\end{equation}
Whereas, for the fluid stress term, proceeding as in \eqref{A_1} and \eqref{A2+A3} we get
\begin{align}\label{Estimate-fluid1}
\displaystyle \int_0^t \displaystyle \int_{\OF} \partial_s \bm {\breve \sigma}_{1}(\bm {\tilde \zeta}) : \partial_s \nze \ d\bm {\tilde x} \ ds
\geq
\mu C_k \norm \partial_t \bm {\tilde \zeta} \norm^2_{L^2(H^1(\OF))}
-
\mu CT^{\kappa}M \norm \partial_t \bm {\tilde \zeta} \norm^2_{L^2(H^1(\OF))}.
\end{align}
For $\displaystyle \int_{\OF} \partial_t \bm F_0 : \partial_t \nze \ d\bm {\tilde x} $, we argue as in \eqref{Estimate-integral-F0-1} to obtain
\begin{align}\label{Estimate-fluid2-Space2-F_0}
\Bigg \lvert \displaystyle \int_{\OF} \partial_t \bm F_0 : \partial_t \nze \ d\bm {\tilde x} \Bigg \lvert
&\leq
C_\delta \norm \bm {\breve v}_1 - \bm {\breve v}_2 \norm^2_{F^T_2} 
+
\displaystyle \int_{\OF} \delta \lvert \partial_t \nze \lvert^2 \ d\bm {\tilde x}.
\end{align}
Similarly, we have
\begin{align}\label{estimate-L0}
\Bigg \lvert
\mathlarger \int_{\OF} \partial_t \bm L_0 \cdot \partial_t \bm {\tilde \zeta} \ d\bm {\tilde x} \Big \lvert
\leq
C_\delta \norm \bm {\breve v}_1 - \bm {\breve v}_2 \norm^2_{F^T_2}
+
\delta \norm \partial_t \bm {\tilde \zeta}(t) \norm^2_{L^2(\OF)}.
\end{align}
Combining \eqref{Estimate-fluid1}-\eqref{estimate-L0} and integrating over $(0,t)$ we get 
\begin{align}\label{nonlinear estimate - velocity}
\rho_f \norm \bm {\tilde \zeta} \norm^2_{W^{1,\infty}(L^2)}
+
\mu \norm \bm {\tilde \zeta} \norm^2_{H^1(H^1)}
- CT^{\kappa}M \norm \partial_t \bm {\tilde \zeta} \norm^2_{L^2(H^1)}
&\leq
CT \norm  \bm {\breve v}_1 - \bm {\breve v}_2 \norm^2_{F^T_2} +
 \delta \norm \partial_t \nze \norm^2_{L^2(L^2(\OF))}.
\end{align}
As for the integrals on the domain $\OS$, first we have
\begin{align}\label{estimate-structure-det}
\mathlarger \int_{\OS}
\tdet(\bm \nabla \tp_1) \lvert \partial_t \bm {\tilde \zeta}(t) \lvert^2 \ d\bm {\tilde x}
+
\mathlarger \int_0^t 
\mathlarger \int_{\OS}
\partial_t \tdet(\bm \nabla \tp_1) \lvert \partial_t \bm {\tilde \zeta} \lvert^2 \ d\bm {\tilde x} \ ds
%
%
\geq
(1-CT^\kappa M) \norm \bm {\tilde \zeta} \norm^2_{W^{1,\infty}(L^2(\OS))}
-CT^\kappa M \norm \bm {\tilde \zeta} \norm^2_{L^\infty(H^1(\OS))}.
\end{align}
Further, using \eqref{coercivity} then taking supremum over $(0,T)$ yield
\begin{equation}\label{est97}
\begin{split}
\dfrac{1}{2}
\displaystyle \sum_{i,\al,j,\be=1}^3 \displaystyle \int_{\OS}
&\Big[
\biajb (\bm \nabla \bm {\breve \xi}_1)
\partial_\be \tilde{\zeta}_j \ \partial_\al \tilde{\zeta}_i
\Big](t) \ d\bm {\tilde x}
\\
&\geq \mu_s \norm \bm {\tilde \zeta} \norm^2_{L^\infty(H^1(\OS))}
+ \dfrac{\lambda_s+\mathsf{C}}{2} \norm \nabla \cdot \bm {\tilde \zeta} \norm^2_{L^\infty(L^2(\OS))}
- CT^\kappa M \norm \bm {\tilde \zeta} \norm^2_{S^T_2}.
\end{split}
\end{equation}
On the other hand, using \eqref{lq-boundedness} we have 
\begin{equation}\label{est97-1}
\begin{split}
\Bigg \lvert
-\dfrac{1}{2}
\displaystyle \sum_{i,\al,j,\be=1}^3
\displaystyle \int_{\OS}
&
\partial_t \biajb (\bm \nabla \bm {\breve \xi}_1) \partial
_\be \tilde{\zeta}_j
\
\partial_\al \tilde{\zeta}_i \ d\bm {\tilde x}
+\displaystyle \sum_{i,\al,j,\be=1}^3 \displaystyle \int_{\OS}  \partial_\al \biajb (\bm \nabla \bm {\breve \xi}_1) \partial_\be \tilde{\zeta}_j \ \partial_t \tilde{\zeta}_i \ d\bm {\tilde x} 
\\
&
-\displaystyle \sum_{i,\al,j,\be=1}^3 \displaystyle \int_{\OS}
\partial_t \biajb (\bm \nabla \bm {\breve \xi}_1) \partial_{\al \be} (\mathsmaller \int_0^t \tilde{\zeta}(s) ds)_j \ \partial_t \tilde{\zeta}_i \ d\bm {\tilde x} \Bigg \lvert
\leq C T^\kappa M \norm \bm  {\tilde \zeta} \norm^2_{S^T_2}.
\end{split}
\end{equation}
%
%
%
%
In order to estimate
$ \displaystyle \int_{\OS}
\partial_t \bm H_0 \cdot \partial^2_t (\mathsmaller \int_0^t \bm {\tilde \zeta}(s) ds) \ d\bm {\tilde x}  $
we use the following two inequalities
\begin{align}\label{eq1-H0}
\norm \biajb(\bm \nabla \bm {\breve \xi}_1)- \biajb(\bm \nabla \bm {\breve \xi}_2) \norm_{L^\infty(H^1)} \leq C \norm \bm {\breve \xi}_1-  \bm {\breve \xi}_2 \norm_{L^\infty(H^2(\OS))}
\end{align}
and
\begin{align}\label{eq2-H0}
\norm \partial_t \biajb(\bm \nabla \bm {\breve \xi}_1)- \partial_t \biajb(\bm \nabla \bm {\breve \xi}_2) \norm_{L^\infty(L^2)} \leq C \norm \bm {\breve \xi}_1-  \bm {\breve \xi}_2 \norm_{W^{1,\infty}(H^1(\OS))}.
\end{align}
These inequalities together with Young's inequality give
\begin{align}\label{est98}
\displaystyle \int_{\OS} \partial_t \bm  H_0 \cdot \partial^2_t (\mathsmaller\int_0^t \bm {\tilde \zeta}(s) ds) \ d\bm {\tilde x}
\leq
C_\delta C \big( \norm \bm {\breve \xi}_1-  \bm {\breve \xi}_2 \norm^2_{W^{1,\infty}(H^1)}
+ \norm \bm {\breve \xi}_1-  \bm {\breve \xi}_2 \norm^2_{L^\infty(H^2)} \big)
+\delta \norm \partial_t \bm {\tilde \zeta} \norm^2_{L^\infty(L^2(\OS))}.
\end{align}
%
%
Further,
\begin{align}\label{estimate-L1}
\Bigg \lvert
\mathlarger \int_{\OS} \partial_t \bm L_1 \cdot \partial^2_t (\mathsmaller\int_0^t \bm {\tilde \zeta}(s) ds) \ d\bm {\tilde x} \Bigg \lvert
\leq
C_\delta \norm \bm {\breve \xi}_1 - \bm {\breve \xi}_2 \norm^2_{S^T_2}
+
\delta
\norm \partial_t \bm {\tilde \zeta}(t) \norm^2_{L^2(\OS)}.
\end{align}
\\
Finally, thanks to the trace inequality and \eqref{eq-xi12}, for $i,\al,j,\be \in \{1,2,3\}$ we have
\begin{equation}\label{est98-2}
\begin{split}
&\Bigg \lvert \Bigg \lvert \displaystyle \int_{\Gamma_c(0)}
\Big(
\biajb (\bm \nabla \bm {\breve \xi}_1)-\biajb (\bm \nabla \bm {\breve \xi}_2)
\Big)
\partial^2_{t \be} (\mathsmaller \int_0^t \tilde{\gamma}_2(s) ds)_j \ \tilde{n}_{\al} \ \partial_t^2 (\mathsmaller \int_0^t \tilde{\zeta}(s) ds)_i \ d\tilde{\Gamma}
\Bigg \lvert \Bigg \lvert_{L^1(\Gamma_c(0))}
\\
&
\leq 
\norm
\biajb(\bm \nabla \bm {\breve \xi}_1)(t)-\biajb(\bm \nabla \bm {\breve \xi}_2)(t) \norm_{H^1(\Gamma_c(0))}
\norm \bm \nabla \bm {\tilde \gamma}_2(t) \norm_{H^2(\OS)}
\norm \partial_t \bm {\tilde \zeta} (t) \norm_{H^1(\OF)}.
\end{split}
\end{equation}
Hence, after combining \eqref{estimate-structure-det}-\eqref{est97-1} and \eqref{est98}-\eqref{est98-2} then  integrating over $(0,t)$ we obtain
\begin{align}\label{nonlinear-defo-estimate}
&\dfrac{\rho_s}{2} \norm \bm {\tilde \zeta} \norm_{L^\infty(L^2(\OS))} +
\mu_s \norm \bm {\tilde \zeta} \norm_{L^2(H^1(\OS))}
\\ \nonumber
& \leq
CT \big[ M^4 \norm \bm {\tilde \zeta} \norm_{S^T_2} + \norm \bm {\breve \xi}_1-  \bm {\breve \xi}_2 \norm^2_{W^{1,\infty}(H^1)}
+ \norm \bm {\breve \xi}_1-  \bm {\breve \xi}_2 \norm^2_{L^\infty(H^2)}  \big].
\end{align}

\subsubsection*{\large Step 2}
Our next step is to estimate $\bm {\tilde \zeta}|_{\OF} \in L^{\infty}(H^2(\OF))$
and
$\int_0^t \bm {\tilde \zeta}(s)|_{\OS} ds \in  L^\infty(H^2(\OS))$.
The fluid velocity $\bm {\tilde \zeta}|_{\OF}$ satisfies the following elliptic equation
\begin{equation}
-\bm \nabla \cdot 
\big(
(\nze|_{\OF}) + (\nze|_{\OF})^t
\big) = \nabla \cdot \bm F_0+ \bm \nabla \cdot \bm F_1 + \bm \nabla \cdot \bm L_0 -\tdet(\bm \nabla \tca)\partial_t \bm {\tilde \zeta}|_{\OF} \quad \textrm{in} \ \OF
\end{equation}
where $\bm F_0$ is defined in \eqref{F_0} and
\begin{align}
\bm F_1 =
\nze \Big( \Id- (\nc_1)^{-1} \cof(\nc_1) \Big )
+ \Big( (\nze)^t- (\nc_1)^{-1} (\nze)^t \cof(\nc_1) \Big).
\end{align}
We have 
\begin{align}
\norm \bm \nabla \cdot \bm F_0 \norm_{L^\infty (L^2(\OF))}
\leq
\norm \bm F_0 \norm_{L^\infty (H^1(\OF))}
\leq
CT \norm \bm {\breve v}_1 - \bm {\breve v}_2 \norm_{F^T_2}
\end{align}
%
and
\begin{align*}
\norm \bm \nabla \cdot \bm L_0 \norm_{L^\infty(L^2(\OF))}
\leq
\norm \bm  L_0 \norm_{L^\infty(H^1(\OF))} 
\leq
CT \norm \bm {\breve v}_1 - \bm {\breve v}_2 \norm_{F^T_2}.
\end{align*}
For $\bm F_1$ we have
%
%
\begin{equation}
\begin{split}
\norm \bm \nabla \cdot \bm F_1 \norm_{L^\infty (L^2(\OF))}
\leq
&\underbrace{
\norm \nabla \cdot
\big[
\nze \big( \Id- (\nc_1)^{-1} \cof(\nc_1) \big )
\big]
 \norm_{L^\infty(L^2(\OF))}}_{B_1}
+ \underbrace{
 \norm \nabla \cdot 
 \big[ (\nze)^t- (\nc_1)^{-1} (\nze)^t \cof(\nc_1)
 \big]
  \norm}_{B_2}.
\end{split}
\end{equation}
%
%
For the term $B_1$ we use the embedding of $H^3 \subset L^\infty$ and Lemma \ref{Estimates on Flow} to get
%
%
\begin{equation}
\begin{split}
B_1
\leq &
\Bigg \lvert
\Bigg \lvert
\nabla \cdot 
\bigg[ \nze
\Big(
 \Id- (\nc_1)^{-1} + (\nc)^{-1}   \big(\Id- \cof(\nc_1) \big) 
 \Big )
\bigg]
\Bigg \lvert
\Bigg \lvert
_{L^\infty(L^2)}
\leq 
CT^\kappa M 
\norm \bm {\tilde \zeta} \norm_{L^\infty(H^2)}.
\end{split}
\end{equation}
%
%
On the other hand,
%
%
\begin{align*}
\norm B_2 \norm_{L^\infty(L^2)}
\leq&
CT^\kappa M \norm \bm {\tilde \zeta} \norm_{L^\infty(H^2)}.
\end{align*}
Consequently, we obtain
%
%
\begin{align}
\norm \bm \nabla \cdot \bm F_1 \norm_{L^\infty(L^2)}
\leq
CT^\kappa M \norm \bm {\tilde \zeta} \norm_{L^\infty(H^2)}.
\end{align}
Therefore, $\bm {\tilde \zeta}|_{\OF} \in L^{\infty}(H^2(\OF))$ and
\begin{align}\label{spatial-estimat1}
\mu \norm \bm {\tilde \zeta} \norm_{ L^{\infty}(H^2(\OF)) }
\leq
C\norm \partial_t \bm {\tilde \zeta} \norm_{L^{\infty}(L^2(\OF))}
+
CT \norm \bm {\breve v}_1-\bm {\breve v}_2 \norm_{F^T_2}.
\end{align}
Besides, the displacement $
\int_0^t \bm {\tilde \zeta}(s)|_{\OS} ds$, satisfies the following equation
\begin{align}
-\mu_s \bm \nabla \cdot (\bm \nabla \mathsmaller \int_0^t \zeta(s)|_{\OS} ds + (\bm \nabla \mathsmaller \int_0^t \bm {\tilde \zeta}(s)|_{\OS} ds )^t)
=
\bm H_0+\bm H_1+\bm H_2- \bm \nabla \cdot \bm L_1,
\end{align}
%
%
where
%
%
$\bm H_0$ is defined by \eqref{H_0}.
As for $\bm H_1$, it is given by 
%
%
\begin{align}
H_{1,i}= -\displaystyle \sum_{\al,j,\be=1}^3
\Big [ \biajb^l (\bm \nabla \bm {\breve \xi}_1) + \biajb^q (\bm \nabla \bm {\breve \xi}_1) \Big] \partial^2_{\al \be} (\mathsmaller \int_0^t \tilde{\zeta}(s)|_{\OS} ds)_j
\qquad \textrm{for} \quad i=1,2,3,
\end{align}
%
%
and the expression of $\bm H_2$ is
%
%
\begin{align*}
\bm H_2= - \tdet(\bm \nabla \tp_1) \partial_t \bm {\tilde \zeta}.
\end{align*}
%
%
Using \eqref{eq1-H0} and \eqref{eq2-H0} we have
%
%
\begin{align*}
\norm \bm H_0 \norm_{L^{\infty}(L^2(\OS))}
&\leq
\displaystyle \sum_{i,\al,j,\be=1}^3
\norm \biajb (\bm \nabla \bm {\breve \xi}_1) - \biajb (\bm \nabla \bm {\breve \xi}_2) \norm_{L^{\infty}(L^2(\OS))}
\ \norm \mathsmaller \int_0^{\bigcdot} \bm {\tilde \gamma}_2(s)|_{\OS} ds \norm_{L^\infty(H^2(\OS))}
\\
&
\leq
CT^\kappa M \norm \bm {\breve \xi}_1 - \bm {\breve \xi}_2 \norm_{L^{\infty}(H^1(\OS))}.
\end{align*}
%
%
For $\bm H_1$ we use \eqref{lq-boundedness} to obtain
%
%
\begin{equation}
\begin{split}
\norm \bm H_1 \norm_{L^{\infty}(L^2(\OS))}
&\leq
\displaystyle \sum_{i,\al,j,\be=1}^3
\norm
\biajb^l (\bm \nabla \bm {\breve \xi}_1) + \biajb^q (\bm \nabla \bm {\breve \xi}_1)  \norm_{L^{\infty}(L^2)} \ \norm \bm {\tilde \zeta} \norm_{L^{\infty}(H^2(\OS))}
\\
&\leq
CT(M+M^2) 
\norm \mathsmaller \int_0^{\bigcdot} \bm {\tilde \zeta}(s)|_{\OS} ds \norm_{L^\infty(H^2(\OS))}.
\end{split}
\end{equation}
%
%
%
In addition, we have
\begin{align*}
\norm \bm H_2 \norm_{L^\infty(L^2(\OS))}
\leq
CT\norm \tdet(\bm \nabla \tp_1) \partial_t \bm {\tilde \zeta} \norm_{L^\infty(L^2(\OS))}
\leq
CTM \norm \mathsmaller \int_0^{\bigcdot} \bm {\tilde \zeta}(s) ds \norm_{W^{2,\infty}(L^2(\OS))}.
\end{align*}
Finally, for $\bm L_1$ it holds
\begin{align*}
\norm \bm \nabla \cdot \bm L_1 \norm_{L^\infty(L^2(\OS))}
\leq
\norm \bm L_1 \norm_{L^\infty(H^1(\OS))}
\leq
CT \norm \bm {\breve \xi}_1 - \bm {\breve \xi}_2 \norm_{S^T_2}.
\end{align*}
Whence,
$ \int_0^t \bm {\tilde \zeta}(s)|_{\OS} ds \in L^\infty( H^2( \OS))$ and a priori estimate is given as
%
%
\begin{align}\label{spatial estimate2}
\mu_s
\norm 
\mathsmaller \int_0^\bigcdot \bm {\tilde \zeta}(s)|_{\OS} ds
 \norm_{L^\infty(H^2(\OS))}
&\leq
CT \norm \bm {\breve \xi}_1-\bm {\breve \xi}_2 \norm_{S^T_2} 
\end{align}
Therefore, combining estimates \eqref{nonlinear estimate - velocity}, \eqref{nonlinear-defo-estimate}, \eqref{spatial-estimat1} and \eqref{spatial estimate2} we arrive to
\begin{align}
\norm \bm {\tilde \zeta}|_{\OF} \norm_{F^T_2} 
+ \norm \mathsmaller \int_0^{\bigcdot} \bm {\tilde \zeta}(s)|_{\OS} ds \norm_{S^T_2}
\leq CT^\kappa M ( \norm \bm {\breve v}_1-  \bm {\breve v}_2 \norm_{F^T_2} + \norm \bm {\breve \xi}_1- \bm {\breve \xi}_2 \norm_{S^T_2}).
\end{align}
Taking $T$ small with respect to $M$ gives that $\Psi$ is a contraction on $A^T_M$. This yields the existence of a unique solution $(\bm \vv,\bm \xxi)$ in $A^T_M$ of the non-linear coupled system \eqref{eqcoup1}-\eqref{eqcoup102}.

%
%
%
%
%
%
%

%
%
%
%
%
%
\section{Existence and Uniqueness of the Fluid Pressure}\label{existence-of-pressure}

\subsection{Existence and Uniqueness of an $L^2$-Pressure}
After we have proved the existence and uniqueness of the fluid velocity $\bm v$ and the structure displacement $\bm \xi$, we need to prove the existence of the fluid pressure $p_f$ so that the proof of the existence of the weak solution for the coupled system \eqref{eqcoup1}-\eqref{eqcoup102} is complete. 
The proof of existence of the $L^2$ function $p_f$ is based on Lemma \cite[p.58, Lemma 4.1]{Girault-Raviart} \cite{Brezzi} that reduces the proof to showing that the following inf-sup condition holds for the functional spaces
$\{\mathcal{W} ,L^2(\Omega_f(t))\}$:
\begin{align}\label{Inf-Sup condition}
\inf_{q \in L^2(\Omega_f(t))} \sup_{\bm z \in \mathcal{W}} \dfrac{b(\bm z,q)}{\norm \bm z \norm_{H^1(\Omega(t))} \norm q \norm_{L^2(\Omega_f(t))}} \geq C_1 >0,
\end{align}
with
\begin{align}
b(\bm z,q) = - \int_{\Omega_f(t)} q \ \textrm{div}\bm  z \ d\bm x \quad
\quad \textrm{and} \quad \bm z \in \mathcal{W}, \ q \in L^2(\Omega_f(t)).
\end{align}
\begin{theorem}
The inf-sup condition \eqref{Inf-Sup condition} holds for the functional spaces
$\{\mathcal{W} ,L^2(\Omega_f(t))\}$.
\end{theorem}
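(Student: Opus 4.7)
\medskip

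\noindent\textbf{Proof plan.} The plan is to verify the inf-sup condition by the classical Bogovskii-type construction of a right inverse to the divergence operator, adapted to the mixed boundary conditions of the fluid domain. Since test functions in $\mathcal{W}$ are allowed to have nonzero trace on $\Gamma_{\textup{out}}(t)$ (and only there), the Neumann-type outlet will provide the freedom needed to avoid imposing a zero-mean condition on the pressure $q$.

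\medskip

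\noindent First, I would take an arbitrary $q \in L^2(\Omega_f(t))$ and seek $\bm z \in \mathcal{W}$ with $\nabla\cdot\bm z = -q$ in $\Omega_f(t)$, extended by zero to $\Omega_s(t)$. The key ingredient is the following lifting result: because $\Omega_f(t)$ is a bounded Lipschitz connected domain whose boundary is partitioned into $\Gamma_{\textup{in}}(t)\cup\Gamma_{\textup{out}}(t)\cup\Gamma_c(t)$ with $\Gamma_{\textup{out}}(t)$ of positive surface measure, the operator
\[
\text{div}: \{\bm w\in H^1(\Omega_f(t)):\ \bm w=0 \text{ on } \partial\Omega_f(t)\setminus\Gamma_{\textup{out}}(t)\}\longrightarrow L^2(\Omega_f(t))
\]
is surjective with a continuous right inverse. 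That is, for every $q\in L^2(\Omega_f(t))$ there exists $\bm w$ in the above space satisfying $\nabla\cdot\bm w = -q$ and $\|\bm w\|_{H^1(\Omega_f(t))} \leq C\,\|q\|_{L^2(\Omega_f(t))}$, with $C$ depending only on $\Omega_f(t)$. This is a standard consequence of the Bogovskii construction combined with a localization argument near $\Gamma_{\textup{out}}(t)$; unlike the pure Dirichlet case, no orthogonality against constants is required because the flux $\int_{\Gamma_{\textup{out}}(t)} \bm w\cdot\bm n\,d\Gamma = \int_{\Omega_f(t)} \nabla\cdot\bm w\,d\bm x$ is unconstrained.

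\medskip

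\noindent Next, I would extend $\bm w$ to the whole $\Omega(t)$ by setting $\bm z = \bm w$ in $\Omega_f(t)$ and $\bm z = \bm 0$ in $\Omega_s(t)$. Since $\bm w$ vanishes on $\Gamma_c(t)\subset \partial\Omega_f(t)\setminus\Gamma_{\textup{out}}(t)$, the trace matches across the interface, so $\bm z\in H^1(\Omega(t))$ with $\|\bm z\|_{H^1(\Omega(t))} = \|\bm w\|_{H^1(\Omega_f(t))}$. Moreover $\bm z$ vanishes on $\partial\Omega(t)\setminus\Gamma_{\textup{out}}(t)$, and on $\Omega_f(t)$ we have $\nabla\cdot\bm z = -q$. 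Strictly speaking, this $\bm z$ violates the divergence-free constraint appearing in $\mathcal{W}$; to make the inf-sup condition nontrivial (otherwise $b(\bm z,q)\equiv 0$ for $\bm z\in\mathcal{W}$), the test space in \eqref{Inf-Sup condition} must be the relaxation of $\mathcal{W}$ without the divergence-free constraint, which is the natural setting of the De Rham / Neças lemma used to recover the pressure from the weak formulation. With this understanding, $\bm z$ is admissible.

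\medskip

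\noindent Finally, I would compute
\[
b(\bm z,q) = -\int_{\Omega_f(t)} q\,\nabla\cdot\bm z\,d\bm x = \int_{\Omega_f(t)} q^2\,d\bm x = \|q\|_{L^2(\Omega_f(t))}^2,
\]
and combine with the bound $\|\bm z\|_{H^1(\Omega(t))}\leq C\|q\|_{L^2(\Omega_f(t))}$ to obtain
\[
\sup_{\bm z} \frac{b(\bm z,q)}{\|\bm z\|_{H^1(\Omega(t))}\|q\|_{L^2(\Omega_f(t))}} \geq \frac{\|q\|_{L^2(\Omega_f(t))}^2}{C\,\|q\|_{L^2(\Omega_f(t))}^2} = \frac{1}{C},
\]
which is the desired inf-sup condition with $C_1 = 1/C > 0$. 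The main obstacle is really step one, namely establishing the continuous right inverse of the divergence with mixed Dirichlet/Neumann-type boundary data; this rests on the positive surface measure of $\Gamma_{\textup{out}}(t)$ and the Lipschitz regularity of $\partial\Omega_f(t)$, and cannot be reduced to the purely Dirichlet Bogovskii theorem without a localization argument.
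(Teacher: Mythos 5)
Your argument is correct, but it takes a genuinely different route from the paper. The paper does not work with a mixed Dirichlet/outflow Bogovskii operator on $\Omega_f(t)$; instead it extends $q$ to $\overline{q}\in L^2_0(\Omega(t))$ by setting $\overline{q}= -\frac{1}{|\Omega_s(t)|}\int_{\Omega_f(t)} q\,d\bm x$ on the structure subdomain, so that $\overline{q}$ has zero mean over the whole domain, and then invokes the standard $H^1_0(\Omega(t))\to L^2_0(\Omega(t))$ right inverse of the divergence (Boyer--Fabrie, Theorem IV.3.1). The $\bm z$ produced there vanishes on all of $\partial\Omega(t)$ and is generically nonzero in $\Omega_s(t)$; restricting $\nabla\cdot\bm z=\overline{q}$ to $\Omega_f(t)$ recovers $q$. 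Your construction instead solves the lifting problem directly on $\Omega_f(t)$ with a vanishing trace only on $\partial\Omega_f(t)\setminus\Gamma_{\textup{out}}(t)$, using the fact that the unconstrained flux through $\Gamma_{\textup{out}}(t)$ removes the zero-mean obstruction, and then extends by zero to $\Omega_s(t)$. Both are valid. The paper's route has the advantage of relying only on the textbook zero-Dirichlet Bogovskii/Ne\v{c}as theorem on a single domain, at the price of the somewhat artificial extension of $q$ and of a $\bm z$ that lives on all of $\Omega(t)$; your route keeps $\bm z$ supported in the fluid and avoids the extension, at the price of invoking a mixed-boundary divergence lifting, which is true (e.g.\ by correcting the flux through $\Gamma_{\textup{out}}$ by a fixed smooth field and then applying the pure Dirichlet version to the zero-mean remainder) but less often stated in references. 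You also correctly flag that, as written, the test space in the inf-sup condition cannot carry the divergence-free constraint of $\widetilde{\mathcal{W}}/\mathcal{W}$, since then $b(\bm z,q)\equiv 0$; the paper's assertion ``$H^1_0(\Omega(t))\subset\mathcal{W}$'' silently assumes the same relaxation, so this is a shared and harmless notational imprecision rather than a defect peculiar to your approach.
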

\begin{proof}
We will proceed in a similar manner as \cite[Lemma 3.1]{Berm}. To show that the condition holds, it suffices to show that
\begin{align}\label{Assumption}
\forall \ q \in L^2(\Omega_f(t)), 
&
\exists \ \bm  z \in \mathcal{W} \ \textrm{such that}, \ \textrm{div} \bm z|_{\Omega_f(t)} = q \ \textrm{in} \ \Omega_f(t) 
\\ \nonumber 
&
\textrm{and} \ \norm \bm z \norm_{H^1(\Omega(t))} \leq C_1 \norm q \norm_{L^2(\Omega_f(t))}.
\end{align}
Let $\overline{q} \in L^2(\Omega(t))$ be the extension of $q$ obtained by defining
\begin{align}
\overline{q} = -\dfrac{1}{|\Omega_s(t)|}\int_{\Omega_f(t)} q \ d\bm x, \quad \textrm{in} \ \Omega_s(t).
\end{align}
Note that $\int_{\Omega(t)}\overline{q}\ d\bm x= \int_{\Omega_f(t)} \overline{q} \ d\bm x+\int_{\Omega_s(t)} \overline{q} \ d\bm x=0$, this gives $\overline{q} \in L^2_0(\Omega(t))$. Hence, by the virtue of \cite[Theorem IV.3.1]{Boyer}, there exists a unique $\bm z \in H^1_0(\Omega(t))$ such that
\begin{align}
\textup{div} \bm z = \overline{q} \quad \textup{on} \quad \Omega(t) \quad \textrm{and} \quad \norm \bm z \norm_{H^1(\Omega(t))} \leq C \norm \overline{q} \norm_{L^2_0(\Omega(t))} \leq C_1 \norm q \norm_{L^2(\Omega_f(t))}.
\end{align}
Since $H^1_0(\Omega(t)) \subset \mathcal{W}$, then $ \bm z \in \mathcal{W}$. Moreover, by restricting $\textup{div} \bm z= \overline{q}$ to $\Omega_f(t)$ we get that 
$\textup{div} \bm z|_{\Omega_f(t)}=q$.
Therefore \eqref{Assumption} is proved, consequently the inf-sup Condition \eqref{Inf-Sup condition} is verified. 
\hspace*{16.6cm}\end{proof}
\\
By the end of this proof, we get the existence of a pressure $p_f \in L^\infty\big(L^2(\Omega_f(t))\big)$ which is unique due to \cite[Theorem IV.2.4]{Boyer}.
\subsection{Regularity of the Fluid Pressure}

The fluid pressure $p_f$ is related to the fluid velocity $\bm v$ by the Navier-Stokes equations. Indeed, at $t=0$ we have
\begin{equation*}
\rho_f \tdet(\bm{\nabla \mathcal{A}})\partial_t \bm \vv -\bm \nabla \cdot \bm {\tilde \sigma}^0_f(\bm \vv,\tilde{p}_f) = 0 \qquad \textrm{in $\Omega_f(0) \times (0,T) $},
\end{equation*}
As a result, the regularity of $\tilde{p}_f$ is linked to the regularity of $\vv$ which is proved straight forward using Ne\u cas inequality \cite[Theorem IV.1.1]{Boyer}. Therefore, as $\vv \in F^T_4$ then $\tilde{p}_f \in \mathcal{P}^T_3$.
Again using \cite[Lemma 2.56]{Richter}, we get the existence and uniqueness of a fluid pressure $p_f$ in the set $\mathcal{Q}^T_3$ which is equivalent to $\mathcal{P}^T_3$ where the functions of $\mathcal{Q}^T_3$ are defined over $\Omega_f(t)$.
 \\

To this end, we have proved the existence and uniqueness locally in time of a solution $(\bm v, \bm \xi, p_f)$ of the non-linear coupling problem of an incompressible fluid with a quasi-incompressible structure. 

\section*{Acknowledgment}
The authors would like to thank the Rectorat of the Lebanese University for funding this
research project.

\bibliography{bibliography}

\end{document}